\newcounter{mnotecount}[section]
\title[Uniform Temple charts and applications]{Existence of uniform Temple charts and applications to null distance}
\author[Meco]{B Meco$^1$}
\author[Sakovich]{A Sakovich$^1$}
\author[Sormani]{C Sormani$^2$$^3$}
\thanks{The second and the third author began this research 
during the Thematic Program on Nonsmooth Riemannian and Lorentzian geometry hosted at the Fields Institute for Research in Mathematical Sciences in 2022. The paper was finalized during the Thematic Program on Geometry and Convergence in Mathematical General Relativity at the Simon's Center for Geometry and Physics in 2025. The research was funded in part by Sormani's NSF grant DMS-1612409 and PSC-CUNY and Sakovich's Swedish Research Council grants dnr. 2016-04511 and 2024-04845}
\newtheorem{thm}{Theorem}[section]
\newcommand{\bt}{\begin{thm}}
\newcommand{\et}{\end{thm}}
\newtheorem{cor}[thm]{Corollary} 
\newcommand{\bc}{\begin{cor}}
\newcommand{\ec}{\end{cor}}
\newtheorem{lem}[thm]{Lemma} 
\newcommand{\bl}{\begin{lem}}
\newcommand{\el}{\end{lem}}
\newtheorem{prop}[thm]{Proposition}
\newcommand{\bp}{\begin{prop}}
\newcommand{\ep}{\end{prop}}
\theoremstyle{definition} 
\newtheorem{defn}[thm]{Definition}
\newcommand{\ben}{\begin{itemize}}
\newcommand{\een}{\end{itemize}}
\newcommand{\bd}{\begin{defn}}    
\newcommand{\ed}{\end{defn}}
\newtheoremstyle{boldremark}{\dimexpr\topsep/2\relax}{}{}{}{\bfseries}{.}{.5em}{}
\theoremstyle{boldremark}
\newtheorem{rmrk}[thm]{Remark}
\theoremstyle{definition}
\newtheorem{example}{Example}[section]
\theoremstyle{definition}
\newtheorem{definition}{Definition}[section]
\newcommand{\thmref}[1]{Theorem~\ref{#1}}
\newcommand{\lemref}[1]{Lemma~\ref{#1}}
\newcommand{\EXP}{\operatorname{EXP}}
\newcommand{\dhat}{\hat{d}}
\newcommand{\be}{\begin{equation}}
\newcommand{\ee}{\end{equation}}
\numberwithin{equation}{section} 
\def\iff{\Longleftrightarrow}
\def\implies{\Longrightarrow}
\begin{document}

\begin{abstract}
    In this paper, we prove that Temple's cylindrical future null coordinate charts 
    can be constructed uniformly and we estimate the gradients of their optical functions. We then apply these charts to study a spacetime $(N,g)$ that has been converted into a definite metric space $(N,\dhat_\tau)$, where $\dhat_\tau$ is the null distance of Sormani and Vega 
    defined using a 
    weak temporal function $\tau$. 
    In particular, 
    we prove that $(N, \dhat_\tau)$ is a rectifiable metric space, where the causal structure is locally encoded by $\tau$ and $\dhat_\tau$. As a consequence, 
    applying a classical theorem of Hawking  and following a technique developed by
    Sakovich and Sormani,
    we can prove a Lorentzian isometry theorem, generalizing our earlier result.
\end{abstract}

\maketitle


\begin{centering}
\hfill\\
\begin{tiny}
$^1$Department of Mathematics, Uppsala University, Box 480, 751 06 Uppsala, Sweden\hfill\\
$^2$CUNY Graduate Center, New York, New York 10016, USA\hfill\\
$^3$Lehman College, Bronx, New York 10468, USA\hfill\\
\end{tiny}
\end{centering}

\newpage

\section{Introduction}
\label{sec:intro} 

Recall that a spacetime is a smooth connected time oriented Lorentzian manifold, $(N^{n+1},g)$, whose metric tensor, $g$, has signature $(-,+,\ldots,+)$. Given a point $q\in N$ and a timelike unit speed geodesic $\eta$ running through $\eta(0) = q$, Temple \cite{Temple-1938} constructed a cylindrical future null coordinate chart, $\Phi_{q,\eta}$, mapping a cylinder, $W_q$, about $\vec 0$ in ${\mathbb R}^{n+1}$, onto a neighborhood $\Phi_{q,\eta}(W_q)$ of $q$ in $N$. In particular, $\Phi_{q,\eta}$ maps the origin to $q$ and it maps the central axis of the cylinder to the timelike geodesic  $\eta$, while a radial line emanating from the axis at height $t$ is mapped to a future null geodesic, $\gamma$, emanating from $\gamma(0)=\eta(t)$.  See the left side of Figure~\ref{fig:Temple-intro}.  We note that although the described Temple charts  are similar in spirit to the standard Fermi-Walker coordinates where radial lines are mapped to spacelike curves (see e.g. Misner, Thorne and Wheeler \cite{MisnerThorneWheeler}), their advantage is that they may be used to recover certain information about the local causal structure, as we will explain below.

\begin{figure}[h] 
    \centering
    \includegraphics[width=.8\textwidth]{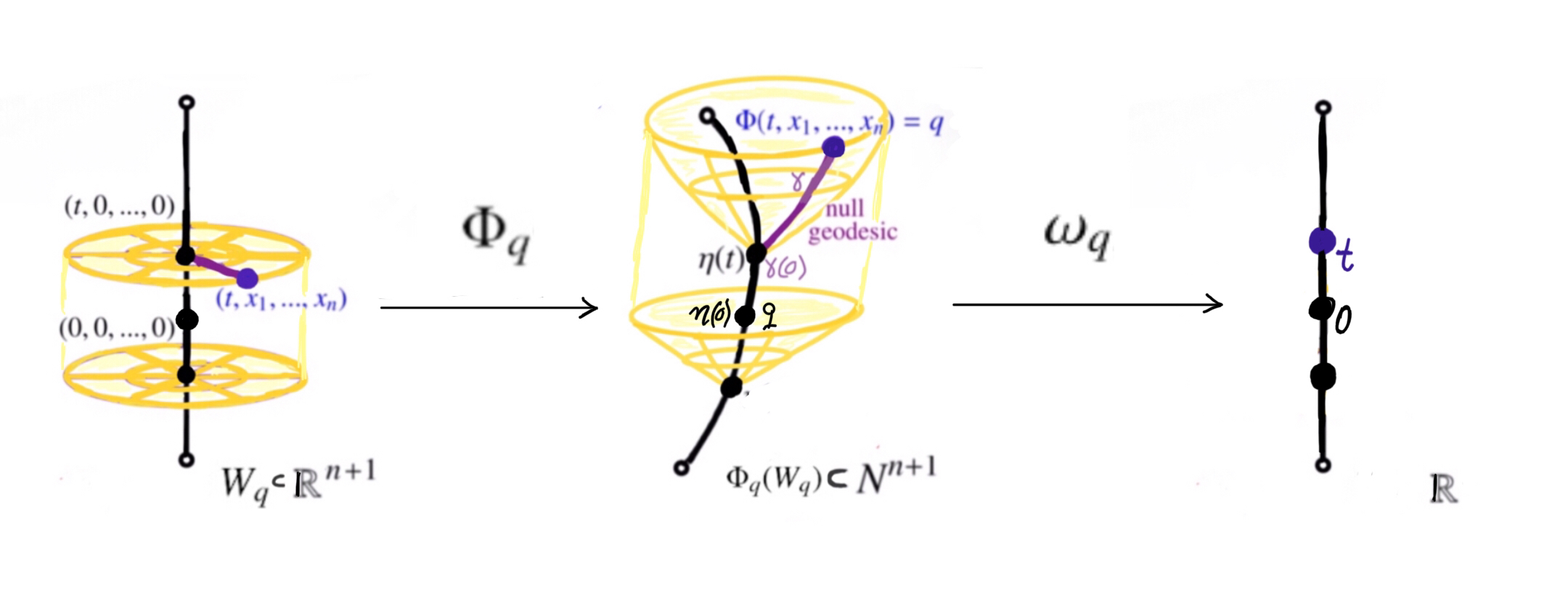} 
    \caption{On the left we see Temple's chart, $\Phi_{q}=\Phi_{q,\eta}:W_q\subset {\mathbb R}^{n+1} \to \Phi_{q}(W_q)\subset N$. On the right we see Temple's optical function $\omega_q=\omega_{q,\eta}$ of this chart, mapping $\Phi_{q}(W_q)$ to an interval and taking the null cones to points.}
    \label{fig:Temple-intro}
\end{figure}

In this article we prove that  Temple charts can be constructed in a uniform way, as stated in the following theorem depicted in Figure~\ref{fig:unif-Temple}:

\begin{thm} 
    \label{thm:unif-Temple-intro}
    Every $p\in N^{n+1}$ has a neighborhood, $U_p\subset N^{n+1}$, with the property that each point $q\in U_p$, has a Temple chart that covers $U_p$ as follows:
    \be 
        \label{eq:unif-chart}
        \Phi_{q,\eta}:W_q \to \Phi_{q,\eta}(W_q) \textrm{ such that } U_p \subseteq \Phi_{q,\eta}(W_q).
    \ee
    Here each $\eta=\eta_q$ with $\eta(0)=q$ is a member of a smooth collection of timelike geodesics perpendicular to a fixed spacelike hypersurface, $\Sigma_p$, that contains $p$.
\end{thm}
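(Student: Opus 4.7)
The plan is to realize all the Temple charts simultaneously as a smooth family parametrized by the base point $q$, and then extract the uniformity from one openness argument applied to this family.

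To set up the perpendicular family, I would fix a future-pointing timelike unit vector $T\in T_pN$, take $\Sigma_p\subset N$ to be the image under $\exp_p$ of a small spacelike ball in $T^{\perp}\subset T_pN$, and form the normal exponential map
\be
\exp^{\perp}:\Sigma_p\times(-a,a)\to N,\qquad (x,s)\mapsto\exp_x\big(s\,\nu(x)\big),
\ee
where $\nu$ is the future unit normal to $\Sigma_p$. For $\Sigma_p$ and $a$ small this is a diffeomorphism onto an open neighborhood $V_p\ni p$, so each $q\in V_p$ has unique Gaussian coordinates $q=\exp^{\perp}(x_q,s_q)$, and the rule $\eta_q(t):=\exp^{\perp}(x_q,s_q+t)$ produces a smooth family of unit timelike geodesics perpendicular to $\Sigma_p$ with $\eta_q(0)=q$. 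I would then assemble each Temple chart from these data: choose a smooth orthonormal frame $\{e_1(q,t),\ldots,e_n(q,t)\}$ in $\dot\eta_q(t)^{\perp}$, form the future null vector $v(q,t,\theta):=\dot\eta_q(t)+\sum_{i=1}^n\theta_ie_i(q,t)$ for $\theta\in S^{n-1}$ (normalized so that $g(v,\dot\eta_q)=-1$), and set
\be
\Phi_{q,\eta_q}(t,r,\theta):=\exp_{\eta_q(t)}\!\big(r\,v(q,t,\theta)\big)
\ee
on a cylinder $W=(-T,T)\times[0,R)\times S^{n-1}$. The dependence on $q$ is smooth by construction.

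For $q=p$, the standard argument (transversality at $r=0$, injectivity off the axis by compactness of $\overline W$) shows that $\Phi_{p,\eta_p}$ is a chart on $W$ onto an open neighborhood of $p$ for some choice of $T,R>0$. Smooth dependence of the geodesic flow on initial data, combined with compactness of $\overline W$, then yields a smaller neighborhood $V_p'\subset V_p$ of $p$ on which the same cylinder $W$ works simultaneously for every $q\in V_p'$: each $\Phi_{q,\eta_q}|_W$ is a chart onto its image. To extract a common $U_p$, I would consider
\be
F:V_p'\times W\to V_p'\times N,\qquad F(q,(t,r,\theta))=\big(q,\Phi_{q,\eta_q}(t,r,\theta)\big).
\ee
Its differential is block triangular with identity on the $q$-block and $D\Phi_{q,\eta_q}$ on the fibers, and is invertible wherever the latter is, so $F$ is an open map. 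Hence the image $\mathcal O\subset V_p'\times N$ is open, and since $(p,p)\in\mathcal O$ it contains a product $U_p\times U_p$ for some open $U_p\ni p$. Then for every $q\in U_p$ and every $y\in U_p$, $(q,y)\in\mathcal O$ reads exactly $y\in\Phi_{q,\eta_q}(W)$, i.e.\ $U_p\subseteq\Phi_{q,\eta_q}(W)$. Setting $W_q:=W$ completes the construction.

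The step I expect to be the main obstacle is producing a single cylinder $W$ that works for all $q$ in a neighborhood of $p$: this requires bounding below, uniformly in $q$, the Jacobian and the injectivity radius of the family $\{\Phi_{q,\eta_q}\}$ on $\overline W$. These bounds should follow from compactness of $\overline W$ and smooth dependence of null geodesics on initial data, but they need to be tracked quantitatively near the central axis where the chart is only continuous. Once they are in place, the openness argument producing $U_p$ is immediate.
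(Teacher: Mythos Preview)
Your overall architecture matches the paper's exactly: build the perpendicular family of timelike geodesics via a Gaussian/normal exponential construction off a small spacelike slice $\Sigma_p$, package the Temple charts into a single map $F(q,w)=(q,\Phi_q(w))$, show the image of $F$ is open, and extract a product neighborhood $U_p\times U_p$ around $(p,p)$. The paper does precisely this, with $F$ playing the role of the map $\Psi$ in its Lemma~3.4 (``GeneralUniformity'').

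The gap is in how you obtain openness of the image. You argue that $DF$ is block triangular with $D\Phi_{q,\eta_q}$ on the fiber, hence invertible where the latter is. But $\Phi_{q,\eta_q}$ is \emph{not differentiable} on the central axis: written in Cartesian fiber coordinates it involves $|\vec x|$, and written in $(t,r,\theta)$ coordinates it collapses $\{r=0\}\times S^{n-1}$ to the curve $\eta_q$. So the differential argument simply does not apply at the axis, and you cannot conclude $F$ is open there. The paper sidesteps this by proving instead that each $\Phi_q$ is a continuous \emph{injection} on the full cylinder (not just off the axis), so that $F$ is a continuous injection between manifolds of equal dimension, and then invoking Brouwer's invariance of domain to get that the image is open. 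This is the substantive content of the paper's Lemma~3.4 and is exactly the ingredient your argument is missing.

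This in turn pushes the real work onto injectivity of $\Phi_q$, which you dismiss as ``the standard argument \ldots\ by compactness of $\overline W$.'' It is not a compactness statement: one must rule out that two future null geodesics emanating from distinct points $\eta_q(s)\neq\eta_q(t)$ on the central timelike curve meet inside the chart. The paper's argument (its Proposition~3.5) is causal: if they met at some point $z$, then $z$ would be joined to $\eta_q(s)$ both by a null geodesic and---since $z\in J^+(\eta_q(t))\subset I^+(\eta_q(s))$---by a timelike geodesic, contradicting uniqueness of geodesics in a convex normal neighborhood. Once you replace your differential step by ``injectivity $+$ Brouwer'' and supply this causal injectivity argument, your outline becomes essentially the paper's proof.
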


\begin{figure}[h] 
    \centering
    \includegraphics[width=.8\textwidth]{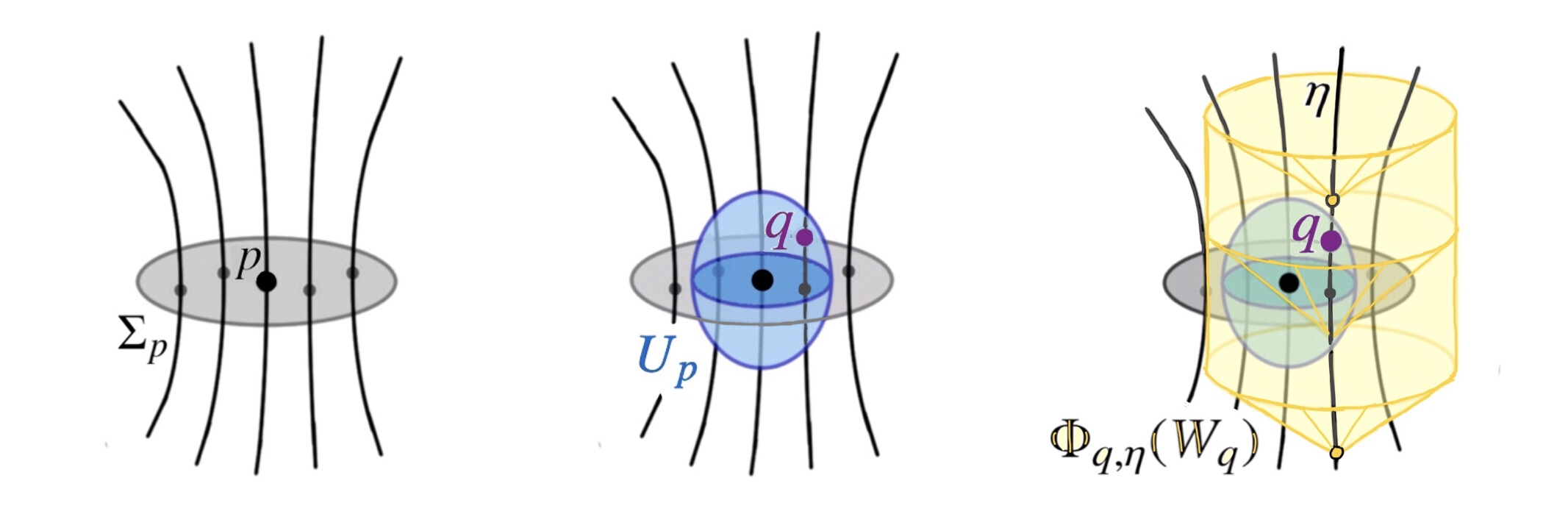} 
    \caption{Theorem \ref{thm:unif-Temple-intro} states that $\forall p\in N$, there exists a spacelike surface, $\Sigma_p$, and a family of  timelike geodesics perpendicular to $\Sigma_p$, and there is a uniform Temple neighborhood, $U_p$, such that $\forall q\in U_p$ there is a timelike geodesic, $\eta = \eta_q$, passing through $q$ and beloning to the aformentioned family such that it has a uniform Temple chart whose image, $\Phi_{q,\eta}(W_q)$, contains $U_p$.}
    \label{fig:unif-Temple}
\end{figure}

Throughout this paper we will refer to any neighborhood, $U_p$, with the properties described in Theorem~\ref{thm:unif-Temple-intro} as a {\em uniform Temple neighborhood} and any of its Temple charts satisfying (\ref{eq:unif-chart}) as a {\em uniform Temple chart}. Our notion of a uniform Temple neighborhood is similar to the notion of a uniform normal neighborhood in Riemannian Geometry, see e.g. Kobayashi and Nomizu \cite[Chapter 3, Theorem 8.7]{KobayashiNomizu}. However, since Temple charts are not diffeomorphisms, we cannot simply apply the Inverse Function Theorem to produce our uniform Temple neighborhood as is done in Riemannian Geometry to prove the existence of uniform normal neighborhoods. Instead, we prove Theorem \ref{thm:unif-Temple-intro} by first showing that for a point \( p \in N \), there is a neighborhood \( V \) of \( p \) and a fixed radius \( r > 0 \) such that for each point \( q \in V \) there is a Temple chart \( \Phi_q \) defined on the cylinder of this radius \( r \). To achieve this, in Section \ref{sec:Prelim} we establish local estimates for Lorentzian geodesics and their Jacobi fields, as well as framed exponential maps and their normal radii, that are subsequently used to define the radius \( r \) as above. After the existence of the neighborhood \( V \) has been proven, the conclusion of Theorem \ref{thm:unif-Temple-intro} follows at once from Brouwer's theorem of invariance of domain. We note that Temple's original construction is described in more detail in Section~\ref{sec:Temple} (see Theorem~\ref{thm-opt}), and our Theorem~\ref{thm:unif-Temple-intro} is restated more precisely as Theorem~\ref{th-uniform-Temple} which is proven in Section \ref{sec:construction}. 

In Section \ref{sec:estimate}, we establish some further properties of uniform Temple charts that will be useful for our applications. The main subject of our study here is Temple's optical function. Although a Temple chart $\Phi_{q,\eta}:W_q\to \Phi_{q,\eta}(W_q)$ is only a smooth diffeomorphism away from the central axis, it is a homeomorphism and has a continuous inverse (see Theorem \ref{th-uniform-Temple}). The first component of $\Phi_{q,\eta}^{-1}$ defines \emph{Temple's optical function} associated with the chart, $\omega_{q,\eta}: \Phi_{q,\eta}(W_q)\to {\mathbb R}$, such that
\be 
    \label{eq:opt}
    \omega_{q,\eta}(\gamma(s))=t \textrm{ when } \gamma=\gamma(s) \textrm{ is a null geodesic with } \gamma(0)=\eta(t).
\ee 
The function $\omega_{q,\eta}$ is continuous everywhere and smooth away from $\eta$. A key property of Temple's optical function is that it can be used to recover the causal 
future of points lying on the central curve:
\be 
    \label{eq:opt-causal}
    J^+(\eta(t) , \Phi_{q,\eta}(W_q))
    = \omega_{q,\eta}^{-1}([t,\infty))\, \cap \, \Phi_{q,\eta}(W_q),
\ee 
as depicted in Figure~\ref{fig:Temple-intro} on the right.  In order to prove useful estimates for the optical function $ \omega_{q,\eta}$, we define a \emph{Riemannianized metric tensor}\footnote{This term was coined by Vega in \cite{Vega21}.}, 
\be
    g_R = g + 2g(e_0,\cdot)g(e_0,\cdot),
\ee 
where $e_0$ is the smooth vector field defined by the tangent vectors to the timelike geodesics used in our construction of the uniform Temple neighborhood. We prove Proposition~\ref{prop-g_R} which provides a uniform estimate for the $g_R$-gradients of the optical functions, $|\nabla^{g_R}\omega_{q,\eta}|_{g_R}$, away from the curve $\eta=\eta_q$. This, in turn, is used to show that each $\omega_{q,\eta}$ is Lipschitz on a uniform Temple neighborhood, see Corollary ~\ref{omega-Lip}.  

In Section \ref{sec:application} we use the uniform Temple charts that we constructed in Section \ref{sec:Temple} to prove new results that we briefly summarize below. In Section~\ref{sec:review}, we prepare the reader for our applications. Here we review the notion of a \emph{time function}, $\tau:N \to {\mathbb R}$, which is a continuous function that is strictly increasing along future causal curves, and of a \emph{weak temporal} function in the sense of Burtscher and Garc\'ia-Heveling \cite{Burtscher-Garcia-Heveling-22}. The later notion includes, for example, the \emph{regular cosmological time function} studied by Andersson, Galloway and Howard \cite{AGH} (see also Wald and Yip \cite{Wald-Yip}). We also review the definition of the {\em null distance}, 
\be
    \hat{d}_\tau: N \times N \to [0,\infty)
\ee 
which, according to Sormani and Vega \cite{SV-null}, can be used to convert a spacetime, $(N,g)$, endowed with a weak temporal function, $\tau: N \to {\mathbb R}$, into a definite metric space, $(N,\hat{d}_\tau)$, with the same topology as the original manifold, $N$.   

In Section \ref{sec:biLip} we build upon estimates of Section \ref{sec:estimate} and show that our uniform Temple charts are bi-Lipschitz:

\begin{thm}
    \label{thm:Lip-intro}
    Let $(N,g)$ be a spacetime equipped with a weak temporal function \( \tau \) 
    and let $\dhat_\tau$ be the associated null distance. Given $p \in N$, there is a uniform Temple neighborhood $U_p$ of $p$ such that for any of the uniform Temple charts  $\Phi_q: W_q \to \Phi_q(W_q)$ centered at $q\in U_p$ and covering $U_p$, the restriction $\Phi_q: (\Phi_q^{-1}(U_p), d_{\mathbb{E}^{n+1}}) \to (U_p,\dhat_\tau)$, where $d_{\mathbb{E}^{n+1}}$ denotes the Euclidean distance on $\Phi_q^{-1}(U_p)\subset \mathbb{R}^{n+1}$, is bi-Lipschitz.
\end{thm}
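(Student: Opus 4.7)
The plan is to insert the Riemannianized distance $d_{g_R}$ as an intermediary and establish, uniformly in $q \in U_p$ and for $A, B \in U_p$, the two-sided comparison
\begin{equation*}
    c_1 \, d_{g_R}(A,B) \leq \dhat_\tau(A,B) \leq C_1 \, d_{g_R}(A,B),
\end{equation*}
together with a bi-Lipschitz comparison between $d_{\mathbb{E}^{n+1}}$ on $\Phi_q^{-1}(U_p)$ and the pulled-back $d_{g_R}$. Concatenating these gives the theorem.

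We will obtain the upper bound $\dhat_\tau \leq C_1\, d_{g_R}$ by a standard construction going back to Sormani--Vega: any two sufficiently nearby points admit a piecewise causal zig-zag joining them whose $g_R$-length is comparable to their $g_R$-distance (one moves alternately forward and backward in small causal steps calibrated to the timelike cone opening, with uniform control from Section~\ref{sec:Prelim}). The Lipschitz hypothesis on $\tau$ then bounds the sum of $|\Delta\tau|$'s along the zig-zag by $\Lip_{g_R}(\tau)$ times its $g_R$-length. The lower bound $\dhat_\tau \geq c_1\, d_{g_R}$ is an immediate consequence of the CGM anti-Lipschitz property: on each causal segment of any piecewise causal curve we have $|\Delta\tau|\geq c_1\, d_{g_R}(\text{endpoints of segment})$, so summing and using the triangle inequality for $d_{g_R}$ gives $\sum_i |\Delta\tau_i|\geq c_1\, d_{g_R}(A,B)$, and taking the infimum over piecewise causal curves yields the bound.

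For the bi-Lipschitz comparison between $d_{\mathbb{E}^{n+1}}$ and the pulled-back $d_{g_R}$, the chart analysis of Sections~\ref{sec:Prelim}--\ref{sec:estimate} enters. Away from the central axis $\eta_q$ the map $\Phi_q$ is a smooth diffeomorphism, and the Jacobi-field bounds used in Section~\ref{sec:Temple} together with the uniform control on $|\nabla^{g_R}\omega_{q,\eta}|_{g_R}$ from Proposition~\ref{prop-g_R} furnish uniform two-sided pointwise bounds on $\Phi_q^\ast g_R$ relative to the Euclidean metric. The main obstacle is that $\Phi_q$ fails to be smooth at the axis: its derivatives in the radial Cartesian directions depend on the angle of approach, so $\Phi_q^\ast g_R$ is only continuous (not $C^1$) there. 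We will handle this by passing to polar coordinates $(t,r,\omega)\in I\times[0,R]\times S^{n-1}$ on $W_q$, in which $\Phi_q$ is smooth all the way to $r=0$; the Jacobi-field estimates then give uniform bounds on the polar components of $\Phi_q^\ast g_R$ up to the axis, and the standard polar-to-Cartesian comparison for axisymmetric Riemannian metrics transfers these to a bi-Lipschitz comparison with $d_{\mathbb{E}^{n+1}}$ on the closed Cartesian cylinder, with all constants uniform in $q\in U_p$ thanks to the uniform Temple construction of Section~\ref{sec:Temple}.
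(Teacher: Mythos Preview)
Your overall plan (factor through $d_{g_R}$) and your lower bound $\dhat_\tau \geq c_1\, d_{g_R}$ via the CGM property match the paper exactly. The two remaining steps, however, are handled differently. For $\dhat_\tau \leq C_1\, d_{g_R}$ the paper does not build a generic zig-zag: given $q_1,q_2\in U_p$ it takes the uniform Temple chart $\Phi_{q_1}$ centered at $q_1$, writes $q_2=\Phi_{q_1}(t_2,\vec x_2)$, and uses the \emph{single} intermediate point $q_{12}=\eta_{q_1}(t_2)$ (one timelike segment along $\eta_{q_1}$ plus one null segment to $q_2$); the Lipschitz bound on $\tau$ together with the optical-function Lipschitz estimate of Corollary~\ref{omega-Lip} then gives $\dhat_\tau(q_1,q_2)\le 5K'\, d_{g_R}(q_1,q_2)$. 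For the $d_{\mathbb{E}^{n+1}}$ versus $d_{g_R}$ comparison, the paper again avoids any pointwise analysis of $\Phi_q^\ast g_R$ at the axis: it uses that the framed exponential map $\EXP:\EXP^{-1}(U_p\times U_p)\to U_p\times U_p$ is a diffeomorphism of precompact sets and hence bi-Lipschitz for the product distances, writes $\Phi_q(t,\vec x)=\EXP_{\eta_q(t)}(|\vec x|,\vec x)$, and absorbs the non-smooth factor $\vec x\mapsto(|\vec x|,\vec x)$ via $\big||\vec x_1|-|\vec x_2|\big|\le|\vec x_1-\vec x_2|$ on one side and the optical-function bound $|t_1-t_2|\le 2\,d_{g_R}\big(\Phi_q(t_1,\vec x_1),\Phi_q(t_2,\vec x_2)\big)$ from Corollary~\ref{omega-Lip} on the other. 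Your polar-coordinate route is viable in principle, but note that Lemmas~\ref{lem:components-1}--\ref{lem:components-2} only estimate $\partial_t$ and $\partial_\lambda$; a pointwise two-sided bound on $\Phi_q^\ast g_R$ would additionally require uniform control of the \emph{angular} Jacobi fields along the null geodesics (those with $J(0)=0$), which the paper never computes and which you would need to supply. The paper's $\EXP$-based argument sidesteps this entirely, and in both steps makes Corollary~\ref{omega-Lip} the decisive input rather than an auxiliary one.
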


This result implies, in particular, that $(N,\dhat_\tau)$ is a countably rectifiable metric space, see Corollary~\ref{zero-measure}, which is in fact more general.  

In Section~\ref{sec:null} we apply our uniform Temple charts to recover the local causal structure of a spacetime that has been converted into a metric space using the null distance. Recall that Sakovich and Sormani proved in \cite[Theorem 4.1]{Sak-Sor-null} that if a spacetime, $(N,g)$, has a \emph{proper} locally anti-Lipschitz time function, $\tau$,  then it can be converted canonically into a metric space, $(N, \hat{d}_\tau)$, that \emph{encodes causality globally} in the sense that for all $ q,q'\in N$
we have
\be 
    \label{eq:encodes-causality-introd}
    \hat{d}_\tau(q',q) = \tau(q') - \tau(q) \iff q' \in J^+(q).
\ee
This result was subsequently extended by Burtscher and Garc\'ia-Heveling in \cite{Burtscher-Garcia-Heveling-22} and by Galloway in \cite{Galloway-Null} using other global hypotheses on $(N,g)$ and $\tau$. A key step in the proof is the result of  Sakovich and Sormani \cite[Theorem 1.1]{Sak-Sor-null} which shows that $\hat{d}_\tau$ encodes causality locally in the sense that each $q\in N$ has a neighborhood $U_q$ such that (\ref{eq:encodes-causality-introd}) holds for all $q'\in U_q$. When $\tau$ is a weak temporal function we can establish a stronger version of local encoding of causality stating that each point $p\in N$ has a uniform Temple neighborhood $U_p$ such that (\ref{eq:encodes-causality-introd}) holds for all $q,q'\in U_p$, see Theorem \ref{th-encodes-causality-locally}.

In Section~\ref{sec:Isometry} we apply Theorem \ref{th-encodes-causality-locally} to prove that the conversion of a spacetime $(N,g)$ into a metric space $(N,\dhat_\tau)$ is a unique reversible process up to a choice of a time function $\tau$ satisfying $|\nabla^g \tau|_g =1$. Previously, Sakovich and Sormani applied their result on local encoding of causality  within a Temple chart \cite[Theorem 1.1]{Sak-Sor-null} and a  well-known theorem of Hawking to prove that if a pair of spacetimes, $(N^{n+1}_i,g_i)$, $i=1,2$, with $n\ge 2$ and proper regular cosmological time functions, $\tau_i:N^{n+1}_i\to{\mathbb R}$, have a bijection, $F: N_1\to N_2$, that preserves time, 
\be 
    \label{eq:pres-time}
    (\tau_2 \circ F)(p) = \tau_1(p) \quad \text{ for all } p \in N_1,
\ee
and preserves distance, 
\be 
    \label{eq:pres-dist}
    \hat{d}_{\tau_2}(F(p),F(p')) = \hat{d}_{\tau_1}(p,p') 
    \quad \text{ for all } p,p' \in N_1,
\ee
then $F$ is a diffeomorphism and Lorentzian isometry, see  \cite[Theorem 1.3]{Sak-Sor-null}. In fact, this result holds more generally  assuming that $\tau_i$ are Lipschitz time functions with $|\nabla^{g_i} \tau_i|_{g_i} = 1$ almost everywhere such that $\dhat_{\tau_i}$ encode causality globally. 

Here we remove the hypotheses that causality is globally encoded and prove:  

\begin{thm} \label{thm:isometry-intro}
    Suppose that a pair of spacetimes, $(N^{n+1}_i,g_i)$, $i=1,2$, with $n\geq 2$, have weak temporal functions, $\tau_i:N^{n+1}_i\to{\mathbb R}$, such that $|\nabla^{g_i} \tau_i|_{g_i} =1$ almost everywhere. If a bijection, $F: N_1\to N_2$, preserves time as in (\ref{eq:pres-time}) and preserves distances as in (\ref{eq:pres-dist}), then it is a diffeomorphism and Lorentzian isometry. 
\end{thm}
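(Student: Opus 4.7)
The plan is to follow the strategy of Sakovich--Sormani in \cite[Theorem 1.3]{Sak-Sor-null}, replacing their global causality-encoding hypothesis with the local version afforded by Theorem \ref{th-encodes-causality-locally} from the present paper, and then using the normalization $|\nabla^{g_i}\tau_i|_{g_i}=1$ together with time preservation to reduce Hawking's conformal isometry to a true Lorentzian isometry. The argument breaks naturally into three steps: (i) upgrade the metric-space isometry $F$ to a homeomorphism that preserves local chronology in both directions, (ii) invoke a classical theorem of Hawking to obtain a smooth conformal diffeomorphism, and (iii) use the time-preservation hypothesis to identify the conformal factor.

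For step (i), $F$ is automatically a homeomorphism, since each $\dhat_{\tau_i}$ induces the manifold topology of $N_i$ (by Sormani--Vega), so the metric isometry (\ref{eq:pres-dist}) is continuous in both directions. Given $p \in N_1$, apply Theorem \ref{th-encodes-causality-locally} to obtain a uniform Temple neighborhood $U_p \subset N_1$ of $p$ on which (\ref{eq:encodes-causality-introd}) holds, and a uniform Temple neighborhood $V_{F(p)} \subset N_2$ of $F(p)$ on which the analogous property holds. Set $W_p = U_p \cap F^{-1}(V_{F(p)})$, which is an open neighborhood of $p$ contained in $U_p$ and mapped by $F$ into $V_{F(p)}$. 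For all $q, q' \in W_p$ the chain
\[
q' \in J^+_{N_1}(q) \iff \dhat_{\tau_1}(q,q') = \tau_1(q')-\tau_1(q) \iff \dhat_{\tau_2}(F(q),F(q')) = \tau_2(F(q'))-\tau_2(F(q)) \iff F(q') \in J^+_{N_2}(F(q))
\]
then follows, where the outer equivalences are local encoding of causality and the middle equivalence comes from (\ref{eq:pres-time}) and (\ref{eq:pres-dist}). Since $F$ is a homeomorphism and $I^+$ is the interior of $J^+$ inside any uniform Temple neighborhood, the analogous equivalence holds for the chronological relation $I^+$, so $F$ locally preserves chronology in both directions.

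For step (ii), I would invoke a Hawking-type theorem, in the form used in \cite{Sak-Sor-null}: when $n+1 \ge 3$, a homeomorphism between spacetimes that locally preserves chronology in both directions is automatically a smooth conformal diffeomorphism. This yields $F \in C^\infty(N_1,N_2)$ with $F^* g_2 = \Omega^2 g_1$ for some smooth $\Omega > 0$. For step (iii), differentiating (\ref{eq:pres-time}) gives $F^* d\tau_2 = d\tau_1$ almost everywhere, so
\[
|\nabla^{g_2}\tau_2|_{g_2}^2\circ F = (F^*g_2)^{-1}(F^*d\tau_2, F^*d\tau_2) = \Omega^{-2}\,g_1^{-1}(d\tau_1, d\tau_1) = \Omega^{-2}\,|\nabla^{g_1}\tau_1|_{g_1}^2
\]
almost everywhere on $N_1$. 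The hypothesis $|\nabla^{g_i}\tau_i|_{g_i}=1$ a.e.\ then forces $\Omega^2 \equiv 1$ almost everywhere, and hence everywhere by continuity of $\Omega$, so $F^*g_2 = g_1$ and $F$ is a Lorentzian isometry.

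The main obstacle is step (i): in the original Sakovich--Sormani argument, the earlier local encoding of \cite[Theorem 1.1]{Sak-Sor-null} produced a neighborhood $U_q$ for each single base point $q$ on which (\ref{eq:encodes-causality-introd}) held, which was insufficient to compare two distinct points in a common neighborhood without a global causality-encoding hypothesis. It is precisely the uniformity built into Theorem \ref{thm:unif-Temple-intro}, and consequently into Theorem \ref{th-encodes-causality-locally}, that allows the local encoding to be invoked simultaneously at all pairs of points of $W_p$, thereby producing the local causal bijection to which Hawking's theorem can be applied. Everything else, including the standard upgrade from local to global chronology preservation (by a finite chain argument along timelike curves) that is required to invoke Hawking's theorem, proceeds as in \cite{Sak-Sor-null}.
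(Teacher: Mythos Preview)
Your proof is correct and follows the same three-step strategy as the paper (local causality encoding $\Rightarrow$ Hawking/Levichev conformal isometry $\Rightarrow$ conformal factor equals $1$); in particular, your step (i) is precisely Lemma~\ref{lem-causal-bi}, and the paper applies Theorem~\ref{thLevichev} \emph{locally} on each $U_p$ and then notes that a bijective local isometry is a global one, so your proposed ``finite chain'' upgrade to global chronology preservation is not actually needed. The one substantive difference is step (iii): the paper defers to the coarea-formula argument of \cite[Proof of Theorem 1.3]{Sak-Sor-null}, whereas your direct computation---pulling back $d\tau_2$ along the smooth diffeomorphism $F$, comparing $g_i^{-1}(d\tau_i,d\tau_i)$ via $F^*g_2=\Omega^2 g_1$, and using continuity of $\Omega$ to pass from almost everywhere to everywhere---is a valid and arguably more elementary alternative.
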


Our Theorem~\ref{thm:isometry-intro} is a direct consequence of the existence of  uniform Temple charts established in Theorem~\ref{thm:unif-Temple-intro} combined with the local encoding of causality for weak temporal functions in Theorem \ref{th-encodes-causality-locally}, and our following local to global theorem:
\begin{thm} \label{thm:local-to-global}
    Suppose that a pair of spacetimes, $(N^{n+1}_i,g_i)$, $i=1,2$, with $n\geq 2$, have weak temporal functions, $\tau_i:N^{n+1}_i\to{\mathbb R}$, such that $|\nabla^{g_i} \tau_i|_{g_i} =1$ almost everywhere. Suppose additionally that $\tau_i$ and $\dhat_{\tau_i}$ locally encode causality in the sense that
    \be
        \forall p \in N_i \,\,\exists U_p \textrm { such that (\ref{eq:encodes-causality-introd}) holds with } \tau=\tau_i \text{ and } \dhat_\tau = \dhat_{\tau_i} \,  \forall q,q'\in U_p.
    \ee
    If a bijection, $F: N_1\to N_2$, preserves time as in (\ref{eq:pres-time}) and preserves distances as in (\ref{eq:pres-dist}), then it is a diffeomorphism and Lorentzian isometry.  
\end{thm}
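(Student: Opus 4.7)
The strategy is to localize the Sakovich--Sormani isometry theorem \cite[Theorem 1.3]{Sak-Sor-null} by using the local encoding of causality hypothesis as a substitute for its global counterpart, and then to patch the resulting local isometries.

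First, I would verify that $F$ is a homeomorphism. Since by Sormani--Vega the null distance $\dhat_{\tau_i}$ induces the original manifold topology on $N_i$, the distance-preserving bijection $F$ automatically becomes a homeomorphism with respect to the manifold topologies. Fix an arbitrary $p \in N_1$, and let $U_p \subset N_1$ and $U_{F(p)} \subset N_2$ be neighborhoods on which the local encoding of causality (\ref{eq:encodes-causality-introd}) holds for $\tau_1, \dhat_{\tau_1}$ and $\tau_2, \dhat_{\tau_2}$ respectively. Using continuity of $F$ and $F^{-1}$, I would shrink to an open $V_p$ with $p \in V_p \subset U_p$ and $F(V_p) \subset U_{F(p)}$. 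For any $q, q' \in V_p$, I chain:
\begin{align*}
q' \in J^+(q) &\iff \dhat_{\tau_1}(q,q') = \tau_1(q')-\tau_1(q) \\
&\iff \dhat_{\tau_2}(F(q),F(q')) = \tau_2(F(q'))-\tau_2(F(q)) \\
&\iff F(q') \in J^+(F(q)),
\end{align*}
where the outer equivalences use local encoding of causality on $U_p$ and on $U_{F(p)}$, while the middle one uses (\ref{eq:pres-time}) together with (\ref{eq:pres-dist}) and symmetry of the null distance. Thus $F|_{V_p} : V_p \to F(V_p)$ is a causality-preserving homeomorphism between open subsets of smooth spacetimes.

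Next, I would invoke Hawking's theorem, in the form used in \cite{Sak-Sor-null}, to conclude that $F|_{V_p}$ is in fact a smooth conformal diffeomorphism, so that $F^*g_2 = \Omega^2 g_1$ on $V_p$ for a smooth positive function $\Omega$. To pin down $\Omega \equiv 1$, I would use time preservation: now that $F$ is known to be smooth, $F^*d\tau_2 = d\tau_1$, and the conformal relation yields $|\nabla^{g_1}\tau_1|^2_{g_1} = \Omega^{-2}|\nabla^{g_2}\tau_2|^2_{g_2}$ on $V_p$. The hypothesis $|\nabla^{g_i}\tau_i|_{g_i}=1$ almost everywhere then forces $\Omega = 1$ almost everywhere, and continuity of $\Omega$ promotes this to everywhere on $V_p$. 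Hence $F|_{V_p}$ is a Lorentzian isometry.

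Since $p \in N_1$ was arbitrary, $F$ is a smooth local isometry, and since it is a bijection, it is a global diffeomorphism and Lorentzian isometry. The main obstacle I anticipate is the Hawking step: one must invoke the correct formulation (a causal-relation-preserving homeomorphism between smooth spacetimes is a smooth conformal diffeomorphism) and then handle the low regularity of $\tau_i$ carefully when extracting $\Omega \equiv 1$ from the almost-everywhere gradient normalization and pulling it back through $F$. Once this is clean, the local-to-global step is immediate from the bijectivity of $F$.
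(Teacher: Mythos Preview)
Your proposal is correct and follows essentially the same route as the paper: shrink to a common neighborhood where local encoding holds on both sides, use time- and distance-preservation to obtain a local causal bijection, apply the Hawking--Levichev theorem to get a smooth conformal map, and then force the conformal factor to be $1$ via the gradient normalization. Two small points where the paper is more explicit: it checks that $V_p$ and $F(V_p)$ are distinguishing (immediate from the existence of a continuous time function) before invoking Levichev, and for $\Omega\equiv 1$ it defers to the coarea argument of \cite[Proof of Theorem 1.3]{Sak-Sor-null} rather than your direct chain-rule computation --- your version is fine too, though note the exponent should be $\Omega^{2}$ rather than $\Omega^{-2}$, which is immaterial since both gradient norms equal $1$ almost everywhere.
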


The results obtained in this article will be applied in our upcoming work \cite{future-work} on \emph{spacetime intrinsic flat distance} following the strategy outlined in \cite{Sormani-Oberwolfach-18} and \cite{Sak-Sor-24}.

\subsection*{Acknowledgements} We thank the two anonymous referees for their constructive feedback which has helped to improve the manuscript substantially. We also thank Annegret Burtscher and Thijs de Kok for pointing out an inaccuracy in Lemma 4.10 in an earlier version of the paper.

\section{Preliminaries} \label{sec:Prelim}

\subsection{Notation and conventions}\label{sec:notations}

Recall that a \emph{spacetime} is understood as a smooth connected time orientable $(n+1)$-dimensional Lorentzian manifold, $(N^{n+1},g)$, where $n \geq 1$ and $g$ has signature $(-,+,\ldots,+)$. In the sequel, we will use letters $a,b,c,\ldots$ to denote indices within the range $\{0,1, \ldots, n\}$ and we will use letters $i,j,k,\ldots$ to denote indices in the range $\{1,\ldots, n\}$. 

As is standard, we say that a piecewise smooth curve $\gamma: I \to N$ is future causal (respectively null, timelike) if all tangent vectors $\dot{\gamma}$, including the one sided tangents at any breaks or end points, are future causal (respectively null, timelike). Past directed causal (respectively null, timelike) curves are defined analogously. We will use the notation $J^+(p)$ respectively $J^-(p)$ to denote causal future respectively causal past of the point $p$, consisting of all points that can be reached from $p$ by a future respectively past causal curve. We recall that $p\in J^\pm(p)$ by convention. The timelike future respectively timelike past of $p$ is defined similarly and is denoted by $I^+ (p)$ respectively $I^-(p)$. We also recall that the \emph{causal relation} $J^+$ is a subset of $N\times N$ defined by 
\be
J^+:=\{(p, q)\, : \text{ there exists a future-directed causal curve from } p \text{ to } q\}.
\ee
Finally, given an open subset $U\subset N$ for any $p\in U$ we let $J^+(p,U)$ denote the set of all $q\in U$ that can be reached from $p$ by a future causal curve $\gamma: [0,1] \to U$.

The Levi-Civita connection of the metric $g$ will be denoted by $\nabla^g$ or simply by $\nabla$, whenever it causes no confusion. The covariant derivative along a curve $\gamma : I \to N$, $\gamma=\gamma(\lambda)$ will be denoted by $D_\lambda$, and the velocity vector field along the curve will be denoted by $\dot \gamma = \dot \gamma (\lambda)$, for $\lambda \in I$. 

Sometimes we will need to equip an open subset $V\subseteq N$ with a (semi-)Riemannian metric $h$ different from $g$. In this regard, we recall that the gradient $\nabla^h f$ with respect to $h$ of a function $f:V \to \mathbb{R}$ is defined by 
\be
    h(\nabla^h f, X) = X(f) \quad \text{ for any vector field } X.
\ee
Whenever $h$ is a Riemannian metric on $V$, we will use the notation
\be 
    d_{h} (p,q) 
    = d^V_{h} (p,q)
    = \inf_{\begin{subarray}{l}\substack{\gamma:[0,1]\to V}\\ \substack{\gamma(0)=p} \\  \substack{\gamma(1)=q} \end{subarray}}
    \int_0^1 \sqrt{h(\dot \gamma(t),\dot \gamma(t))} \, dt
\ee
to denote the Riemannian distance induced on $V$ by $h$. Whenever $V \subseteq N$ is equipped with a distance function $d$ (for example, $d=d_h$ as above), we will use the notation $B_d(x,r)$ to denote a metric ball of radius $r>0$  centered at $x\in V$. A ball of radius $R>0$ centered at the origin of the Euclidean space $(\mathbb{E}^k, d_{\mathbb{E}^k})$ will be denoted by $B^k(R)$.

The following definition to be used in the sequel is from \cite[Chapter $3$]{Lee}.
\begin{defn} 
    \label{defn:frame-field}
    Given a semi-Riemannian manifold $(N,g)$, a collection of smooth vector fields $\{e_a\} = \{e_0, \ldots ,e_n\}$ defined on an open subset $V \subset N$ is called a \emph{frame field}  if 
    \be
        |g(e_a,e_b) |=  \delta_{ab}, \quad \text{for all $a,b = 0,\ldots, n$}.
    \ee
\end{defn} 

Now suppose that $(N,g)$ is a Lorentzian manifold. Given a frame field $\{e_a\} = \{e_0,\dots,e_n\}$ on an open set $V \subset N$, such that $e_0$ is timelike and $e_1, \dots, e_n$ are spacelike,  one can  \emph{Riemannianize}  $g$ by defining a Riemannian metric  
\be 
    \label{eq:Riemannization}
    g_R(X,Y) := 2g(X,e_0)g(Y,e_0) + g(X,Y).
\ee
We note that the frame field $\{e_a\}$ remains  orthonormal with respect to $g_R$:
\begin{equation}
    \label{eq:g_R-orthog}
    g_R(e_a,e_b) = \delta_{ab}, \quad \text{for all $a,b = 0,\ldots, n$}.
\end{equation}

\subsection{Convex normal neighborhoods and framed exponential maps}
\label{subsec:ConvexNormal}

Given any $(p,v) \in TN$ (which means $v\in T_pN$ is a tangent vector to $N$ at $p$), there exists a unique geodesic, $\gamma:(-\epsilon, \epsilon)\to N$ such that $\gamma(0) = p$ and $ \dot \gamma(0) = v$ and we can define the \emph{exponential map at $p$} by $\exp_p(sv) = \gamma(s)$. By the Fundamental Theorem of Ordinary Differential Equations this defines a smooth function on an open neighborhood 
of the zero section of the tangent bundle $TN$. The Inverse Function Theorem can be applied to prove that for every $p \in N$ there is a neighborhood $\tilde V_p \subset N$ about $p$ and a neighborhood $\tilde V_p^T \subset TN$ about $(p,0) \in TN$ such that the \emph{exponential map}
\be
    \label{exp-p-v}
    \exp: \tilde V_p^T \to \tilde V_p \times \tilde V_p, \text{ defined by } \exp(q,v) = (q,\exp_q v),
\ee
is a diffeomorphism. Note that we have
\be
    \tilde V_p^T = \{(q,v) \, : \, q \in \tilde V_p, \, \exp_q(v) \in \tilde V_p\}.
\ee
Furthermore, one can assume without loss of generality that $\tilde V_p$ is geodesically convex, and that $\tilde V_p^T$ is star shaped, so that 
\be
    \forall q,q' \in \tilde V_p \,\,\exists ! \text{ geodesic } \
    \gamma:[0,1]\to \tilde V_p 
    \textrm{ such that } \gamma(0) = q 
    \textrm{ and } \gamma(1) = q'.
\ee

Note that while $\gamma$ is the unique geodesic within $\tilde V_p$ joining $q$ and $q'$, other geodesics may exist outside the neighborhood $\tilde V_p$, which we call a \emph{convex normal neighborhood} of $p$. For more details see \cite[Chapter $3$, Theorem $8.7$]{KobayashiNomizu}. 
In what follows, we will reserve the notation $\tilde V_p$ and $\tilde V_p^T$ to denote the open sets constructed above. 

The following lemma is an immediate consequence of Definition \ref{defn:frame-field}. 

\begin{lem} 
    \label{lem:framed-map}
    Suppose that $V \subset N$ is an open set and let $\{e_a\}$ be a frame field on $V$. Then there is a natural {\bf frame map}
    \be
        E^{\{e_a\}}: V\times \mathbb R^{n+1} \to TN
    \ee
    defined by 
    \be 
        \label{eq:frame-map}
        E^{\{e_a\}}(q, y_0,\ldots, y_n)=(q,y_0 e_0 +\ldots + y_n e_n)\subset TN
    \ee
    which is a smooth diffeomorphism onto its image.
\end{lem}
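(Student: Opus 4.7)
The plan is to verify the three conditions for $E^{\{e_a\}}$ to be a smooth diffeomorphism onto its image: smoothness of the map, bijectivity onto $TV = \{(q,v) \in TN : q \in V\}$, and smoothness of its inverse. Throughout, the key structural input is that by Definition \ref{defn:frame-field}, the vectors $e_0(q), \ldots, e_n(q)$ have $|g(e_a,e_b)|(q) = \delta_{ab}$, so they are linearly independent and form a basis of $T_q N$ at every $q \in V$.

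First, I would observe that $E^{\{e_a\}}$ is smooth directly from its defining formula \eqref{eq:frame-map}: each $e_a$ is a smooth section of $TN$ over $V$, so $(q, y_0, \ldots, y_n) \mapsto (q, \sum_a y_a e_a(q))$ is a smooth map into $TN$. Next, for injectivity, suppose $E^{\{e_a\}}(q, y) = E^{\{e_a\}}(q', y')$; comparing the base point components forces $q = q'$, and then $\sum_a (y_a - y'_a) e_a(q) = 0$ together with the linear independence of $\{e_a(q)\}$ gives $y = y'$. Surjectivity onto $TV$ follows similarly: given $(q, v) \in TV$, the basis property allows us to write $v = \sum_a y_a(q,v) e_a(q)$ uniquely.

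To construct the inverse explicitly and verify its smoothness, I would introduce the dual coframe $\{\theta^a\}$ on $V$ defined by $\theta^a(e_b) = \delta^a_b$. Equivalently, using the Riemannianized metric $g_R$ from \eqref{eq:Riemannization} (under which $\{e_a\}$ is orthonormal by \eqref{eq:g_R-orthog}), one may set $y_a(q,v) = g_R(v, e_a(q))$. Both presentations make it manifest that the coordinate functions $(q,v) \mapsto y_a(q,v)$ are smooth on $TV$, and they provide the inverse $(q,v) \mapsto (q, y_0(q,v), \ldots, y_n(q,v))$ of $E^{\{e_a\}}$. Since $E^{\{e_a\}}$ and its inverse are both smooth, $E^{\{e_a\}}$ is a smooth diffeomorphism onto $TV$.

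This statement is essentially bookkeeping, so I do not anticipate a genuine obstacle. The only point requiring minor care is to record precisely what the image is, namely the restriction $TV \subset TN$ of the tangent bundle to $V$; this is what ``onto its image'' refers to, and it matches the way the map will be used in the sequel when composing with the exponential map \eqref{exp-p-v} to build framed exponential maps on uniform Temple neighborhoods.
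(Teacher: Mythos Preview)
Your proof is correct and follows essentially the same approach as the paper's: the paper's own proof is a one-liner observing that $E^{\{e_a\}}$ is a diffeomorphism because each $e_a$ is smooth and $\{e_0(q),\dots,e_n(q)\}$ is a basis of $T_qN$ at each $q\in V$. Your version simply unpacks this in more detail, explicitly verifying smoothness, bijectivity onto $TV$, and smoothness of the inverse via the dual coframe or $g_R$.
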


\begin{proof}
    The map $E^{\{e_a\}}$ is a diffeomorphism because each vector field $e_a$ for $a = 0,\dots, n$ is smooth and $\{e_0(q),\dots, e_n(q)\}$ is a basis of $T_qN$ at each $q \in V$.
\end{proof}

Recalling the definitions of the sets $\tilde V_p$ and $\tilde V_p^T$ above, we obtain the following lemma.

\begin{lem} 
    \label{lem:framed-exp}
    Let $p \in N$, let $V\subset \tilde V_p$ be a neighborhood of $p$, and let $\{e_a\}$ be a frame field defined on $V$. Then the {\bf framed exponential map at $q \in V$},
    \be
        \EXP^{\{e_a\}}_q: \{(y_0,\ldots,y_n):\, \exp_q(y_0e_0 + \ldots +y_ne_n) \in \tilde V_p\} \to \tilde V_p \subset N^{n+1},
    \ee
    defined by 
    \be
        \EXP^{\{e_a\}}_q(y_0,y_1,\ldots, y_n) := \exp_q(y_0e_0 + y_1e_1 + \ldots +y_ne_n),
    \ee
    is a diffeomorphism. The {\bf framed exponential map} defined by
    \be
        \EXP^{\{e_a\}}(q,y_0,\ldots,y_n) := (q,\EXP^{\{e_a\}}_q(y_0,\ldots,y_n)) 
    \ee
    is smooth in a neighborhood of \( (p,\vec 0) \in N \times \mathbb R^n \) and the restricted maps 
    \begin{align}
        \EXP^{\{e_a\}}&:(E^{\{e_a\}})^{-1}(TV \cap \tilde V_p^T) \to V \times \tilde V_p, \quad \text{and } \label{eq:EXP-1}\\
        \EXP^{\{e_a\}}&: (\EXP^{\{e_a\}})^{-1}(V \times V) \to 
        V \times V,\label{eq:EXP-2}
    \end{align}
    are diffeomorphisms.
\end{lem}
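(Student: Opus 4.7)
The key observation is that the framed exponential map factors as the composition
\be
    \EXP^{\{e_a\}} = \exp \circ E^{\{e_a\}},
\ee
where $E^{\{e_a\}}$ is the frame map from Lemma~\ref{lem:framed-map} and $\exp$ is the exponential map from (\ref{exp-p-v}). Smoothness of $\EXP^{\{e_a\}}$ is immediate from smoothness of both factors, so the work is to track which restrictions turn the composition into a diffeomorphism.

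To prove the claim for (\ref{eq:EXP-1}), first I would use Lemma~\ref{lem:framed-map} to conclude that $E^{\{e_a\}}$ is a diffeomorphism from $V\times \R^{n+1}$ onto its image in $TN$, so its restriction to $(E^{\{e_a\}})^{-1}(TV \cap \tilde V_p^T)$ is a diffeomorphism onto $TV \cap \tilde V_p^T$. Next, since $V\subset \tilde V_p$ and $\tilde V_p^T = \{(q,v):q\in \tilde V_p,\ \exp_q(v)\in \tilde V_p\}$, the exponential map from (\ref{exp-p-v}) restricts to a diffeomorphism from $TV \cap \tilde V_p^T$ onto $V \times \tilde V_p$. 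Composing these two diffeomorphisms establishes (\ref{eq:EXP-1}). The map in (\ref{eq:EXP-2}) is then obtained by restricting (\ref{eq:EXP-1}) to the preimage of the open subset $V\times V \subset V\times \tilde V_p$, and restricting a diffeomorphism to the preimage of an open set in its image remains a diffeomorphism.

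For the initial claim concerning $\EXP^{\{e_a\}}_q$ at fixed $q\in V$, I would exploit the injectivity of $\exp$ on $\tilde V_p^T$ to identify the set $\{v\in T_q N : \exp_q(v)\in \tilde V_p\}$ with the slice $\{v:(q,v)\in \tilde V_p^T\}$, on which $\exp_q$ is a diffeomorphism onto $\tilde V_p$. Pulling this slice back through the linear isomorphism
\be
    (y_0,\ldots,y_n) \mapsto y_0 e_0(q) + \ldots + y_n e_n(q),
\ee
which is the restriction of $E^{\{e_a\}}$ to the fiber over $q$, then yields the stated domain and shows that $\EXP^{\{e_a\}}_q$ is a diffeomorphism onto $\tilde V_p$.

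The only genuinely delicate point in the whole argument is this identification of the domain described for $\EXP^{\{e_a\}}_q$ with the slice of $\tilde V_p^T$ over $q$: a priori the condition $\exp_q(v)\in \tilde V_p$ could admit preimages $v$ lying outside $\tilde V_p^T$, but the injectivity half of the fact that $\exp:\tilde V_p^T \to \tilde V_p \times \tilde V_p$ is a diffeomorphism rules this out. Once that is resolved, the proof is simply a matter of composing diffeomorphisms and restricting them to preimages of open subsets, with no analytic estimates required.
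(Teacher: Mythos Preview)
Your proposal is correct and follows essentially the same approach as the paper: factor $\EXP^{\{e_a\}} = \exp \circ E^{\{e_a\}}$ and observe that each factor is a diffeomorphism on the relevant domains, then restrict. Your ``delicate point'' is in fact already resolved by the paper's explicit characterization $\tilde V_p^T = \{(q,v) : q \in \tilde V_p,\ \exp_q(v) \in \tilde V_p\}$, which makes the domain of $\EXP^{\{e_a\}}_q$ coincide with the slice of $\tilde V_p^T$ over $q$ by definition---no separate appeal to injectivity is required.
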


\begin{proof}
    The maps $\EXP^{\{e_a\}}$ in (\ref{eq:EXP-1}) and \eqref{eq:EXP-2} are diffeomorphisms because 
    \be
        \EXP^{\{e_a\}} = \exp \circ E^{\{e_a\}}
    \ee
    and the map $\exp$ as defined in \eqref{exp-p-v} is a diffeomorphism for this choice of domain and range. Similarly, the map $\EXP^{\{e_a\}}_q$ is a diffeomorphism because of the equality $\EXP^{\{e_a\}}_q = \exp_q \circ E^{\{e_a\}}(q,\cdot)$.
\end{proof}

When there is no risk of confusion, we will simply denote the frame map, $E^{\{e_a\}}$, by $E$, the framed exponential map, $\EXP^{\{e_a\}}$, by $\EXP$ and the framed exponential map at $q$, $\EXP^{\{e_a\}}_q$, by $\EXP_q$. 

For proving the existence of a uniform Temple chart as described in the introduction, we will need a suitable Lorentzian analogue of a normal radius in Riemannian geometry. We define one using frame fields. Recall that we denote the open ball of radius $R$ centered at the origin of the Euclidean space $(\mathbb{E}^{n+1}, d_{\mathbb{E}^{n+1}})$ by $B^{n+1}(R)$.  

\begin{lem} 
    \label{lem:normal-rad}
    Suppose that $V \subset \tilde V_p$ is open and that $\{e_a\}$ is a frame field on $V$. For every $q \in V$, there is a largest possible radius $R > 0$ such that the map
    \be
        \EXP_q : B^{n+1}(R) \to \EXP_q(B^{n+1}(R)) \subset V
    \ee
    is a diffeomorphism. This $R > 0$ will be called the {\bf normal radius at $q$} and will be denoted by $R_{\mathrm{N}}(q, V, \{e_a\})$.
\end{lem}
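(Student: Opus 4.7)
The plan is to identify the normal radius as the supremum of the set of admissible radii and then verify that this supremum is attained. First I would apply the Inverse Function Theorem to $\EXP_q$ at the origin of $\mathbb{R}^{n+1}$. By Lemma~\ref{lem:framed-exp} the map $\EXP_q$ is smooth, and its differential at $0$ is the linear isomorphism $(v_0,\dots,v_n) \mapsto v_0 e_0(q) + \cdots + v_n e_n(q)$ from $\mathbb{R}^{n+1}$ onto $T_qN$, since $\{e_a\}$ is a frame. Combining this with the fact that $V$ is open and contains $q = \EXP_q(0)$, one produces some $R_0 > 0$ for which $\EXP_q|_{B^{n+1}(R_0)}$ is a diffeomorphism with image contained in $V$. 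In particular, the set
\be
    \mathcal{R}_q := \left\{R > 0 : \EXP_q|_{B^{n+1}(R)} \text{ is a diffeomorphism onto a subset of } V\right\}
\ee
is nonempty.

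Next I would observe that $\mathcal{R}_q$ is downward-closed: if $R \in \mathcal{R}_q$ and $0 < R' < R$, then restricting the diffeomorphism $\EXP_q|_{B^{n+1}(R)}$ to the open sub-ball $B^{n+1}(R')$ still produces a diffeomorphism onto its image, which remains inside $V$. Setting $R_* := \sup \mathcal{R}_q \in (0,\infty]$ therefore yields $(0,R_*) \subseteq \mathcal{R}_q$, so $R_*$ is the candidate normal radius.

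The main step, and the only real delicacy, is to verify $R_* \in \mathcal{R}_q$. Fix any sequence $R_k \uparrow R_*$ with $R_k \in \mathcal{R}_q$. Then $B^{n+1}(R_*) = \bigcup_k B^{n+1}(R_k)$, and on this open ball $\EXP_q$ is smooth with everywhere invertible differential (inherited from each $\EXP_q|_{B^{n+1}(R_k)}$ being a diffeomorphism), is injective (any two points of $B^{n+1}(R_*)$ lie in a common $B^{n+1}(R_k)$ on which injectivity already holds), and its image $\bigcup_k \EXP_q(B^{n+1}(R_k))$ is open in $N$ and contained in $V$. Hence $\EXP_q|_{B^{n+1}(R_*)}$ is a smooth injective local diffeomorphism onto an open subset of $V$, therefore a diffeomorphism, and $R_* \in \mathcal{R}_q$ is the largest admissible radius. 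The same argument handles the edge case $R_* = +\infty$ by taking $R_k \to \infty$, showing $\EXP_q$ is then a diffeomorphism of all of $\mathbb{R}^{n+1}$ onto a subset of $V$, with the normal radius interpreted as $+\infty$.
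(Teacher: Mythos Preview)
Your proof is correct, but it takes a longer route than the paper's. The key observation you overlook is that Lemma~\ref{lem:framed-exp} already asserts that $\EXP_q$ is a \emph{global} diffeomorphism from its entire domain $\{(y_0,\dots,y_n):\exp_q(\sum_a y_a e_a)\in \tilde V_p\}$ onto $\tilde V_p$; you cite that lemma only for smoothness. Because of this, the paper needs neither the Inverse Function Theorem nor any limiting argument: the diffeomorphism property is automatic on any ball contained in the domain, and the only constraint is that the image land in $V$. The paper therefore simply sets
\be
R_{\mathrm N}(q,V,\{e_a\}) := \sup\{R>0 : B^{n+1}(R)\subset \EXP_q^{-1}(V)\},
\ee
notes that this supremum is positive since $\EXP_q^{-1}(V)$ is an open neighborhood of $0$, and is done in two lines. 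Your supremum-plus-union argument would be genuinely needed in a setting where no such global diffeomorphism statement is available, but here the hypothesis $V\subset \tilde V_p$ hands it to you for free.
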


\begin{proof}
    Since $V$ is open and $\EXP_q$ is continuous, $\EXP_q^{-1}(V) \subset \mathbb R^{n+1}$ is open as well. Since $\EXP_q(\vec 0) = q$ it follows that $\vec 0 \in \EXP_q^{-1}(V)$, and by the continuity of \( \EXP_q \) there exists \( \delta > 0 \) such that \( B^{n+1}(\delta) \subset \EXP_q^{-1}(V)\). Letting
    \be
        R_0 = R_{\mathrm{N}}(q,V,\{e_a\}) := \sup\{R > 0: B^{n+1}(R) \subset \EXP_q^{-1}(V)\},
    \ee
    it follows that  $\EXP_q(B^{n+1}(R_0)) \subset V$ for \(R_0 > 0 \). Since $V \subset \tilde V_p$ we may apply Lemma \ref{lem:framed-exp} to see that $\EXP_q$ is a diffeomorphism from $B^{n+1}(R_0)$ to its image.
\end{proof}
Next we show that $R_{\mathrm{N}}(q,V,\{e_a\})$ can be chosen uniformly with respect to $q$ when $q$ is restricted to a compact subset  $K\subset \tilde V_p$. 
\begin{lem}
    \label{lem:EXPDiffeo}
    Suppose that $V \subset \tilde V_p$ is open and that $\{e_a\}$ is a frame field on $V$. Then for every compact subset $K \subset V$ there is a positive radius 
    \be
        R_\mathrm{N}(K, V, \{e_a\}) := \inf_{q \in K} R_\mathrm{N}(q, V, \{e_a\}) > 0,
    \ee
    called the {\bf normal radius of $K$} such that for $R_0 := R_\mathrm{N}(K,V,\{e_a\})$ and all $q \in K$ the maps 
    \be
        \EXP_q : B^{n+1}(R_0) \to \EXP_q(B^{n+1}(R_0)) \subset V
    \ee
    are diffeomorphisms. 
\end{lem}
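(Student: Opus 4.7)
The plan is to prove the lemma via a direct compactness argument applied to the framed exponential map, using the openness of its natural domain.

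First I would consider the set
\be
\mathcal{O} := (\EXP^{\{e_a\}})^{-1}(V \times V) \subset V \times \mathbb{R}^{n+1},
\ee
which is open in $V \times \mathbb{R}^{n+1}$ by continuity of $\EXP^{\{e_a\}}$ and openness of $V \times V$ in $V \times N$. Since $\EXP_q(0) = q \in V$ for every $q \in V$, the zero section $V \times \{0\}$ lies in $\mathcal{O}$; in particular, $K \times \{0\} \subset \mathcal{O}$.

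Next I would carry out a standard tube-lemma-style covering argument: for each $q \in K$ there are an open neighborhood $W_q \subset V$ of $q$ and a radius $r_q > 0$ such that $W_q \times B^{n+1}(r_q) \subset \mathcal{O}$. Since $\{W_q\}_{q \in K}$ is an open cover of the compact set $K$, finitely many $W_{q_1}, \ldots, W_{q_m}$ suffice, and setting $R_0 := \min_{1 \leq i \leq m} r_{q_i} > 0$ gives $K \times B^{n+1}(R_0) \subset \mathcal{O}$. Consequently $\EXP_q(B^{n+1}(R_0)) \subset V$ for every $q \in K$, so by the definition given in Lemma~\ref{lem:normal-rad} we have $R_{\mathrm{N}}(q, V, \{e_a\}) \geq R_0$ for every $q \in K$, and therefore $R_{\mathrm{N}}(K, V, \{e_a\}) \geq R_0 > 0$.

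To verify the diffeomorphism claim for $R := R_{\mathrm{N}}(K, V, \{e_a\})$, I would observe that $B^{n+1}(R) \subset \EXP_q^{-1}(V) \subset \EXP_q^{-1}(\tilde V_p)$ for every $q \in K$ (the first inclusion holds because $R \leq R_{\mathrm{N}}(q, V, \{e_a\})$ together with the conclusion of Lemma~\ref{lem:normal-rad}, the second because $V \subset \tilde V_p$). The diffeomorphism assertion then follows from Lemma~\ref{lem:framed-exp}, which states that $\EXP_q$ restricted to $\EXP_q^{-1}(\tilde V_p)$ is a diffeomorphism onto its image.

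There is no substantive obstacle here: the result reduces to the openness of the natural domain of the framed exponential map together with the general principle that on a compact base one can choose a uniform fiber radius inside any open neighborhood of the zero section. The only care needed is to ensure that the uniform radius $R_0$ produced by compactness really lies within the domain on which Lemma~\ref{lem:framed-exp} already guarantees diffeomorphism, which is automatic from the inclusion $V \subset \tilde V_p$.
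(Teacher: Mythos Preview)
Your proposal is correct and follows essentially the same approach as the paper: both arguments exploit the openness of $(\EXP^{\{e_a\}})^{-1}(V\times V)$ in $V\times\mathbb{R}^{n+1}$ together with the compactness of $K\times\{0\}$ to obtain a uniform radius, then invoke Lemma~\ref{lem:framed-exp} for the diffeomorphism claim. The only cosmetic difference is that you run the compactness step as a direct tube-lemma/finite-cover argument, whereas the paper argues by contradiction via a sequence $q_j\in K$ with $R_{\mathrm N}(q_j,V,\{e_a\})\to 0$; these are two standard packagings of the same idea.
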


\begin{proof}
    Suppose on the contrary that $\inf_{q\in K} R_\mathrm{N}(q,V,\{e_a\}) = 0$, then there exists a sequence of points $q_j \in K$ such that $R_\mathrm{N}(q_j,V,\{e_a\}) \to 0$ as $j\to \infty$. Since $K$ is compact, there is a subsequence of $\{q_j\}_j$, denoted for simplicity by the same notation, converging to a point $q\in K$. Due to Lemma \ref{lem:framed-exp} and the inclusion $q\in V \subset \tilde V_p$ we know that $(q,\vec 0)$ lies  in the open set $\EXP^{-1}(V \times V)\subset N \times \mathbb R^{n+1}$. Consequently, there is an open set $U \subset N$ and a radius  $R_U > 0$ such that 
    \be
        (q,0) \in U \times B^{n+1}(R_U) \subset \EXP^{-1}(V \times V).
    \ee
    Now, for all sufficiently large $j$, we have  $q_j \in U$ so
    \be
        (q_j,\EXP_{q_j}(B^{n+1}(R_U)) \subset \EXP(U \times B^{n+1}(R_U)) \subset V \times V,
    \ee
    which implies $\EXP_{q_j}(B^{n+1}(R_U)) \subset V$ and hence the inequality $R_\mathrm{N}(q_j,V,\{e_a\}) \ge R_U > 0$. In particular, we see that $R_\mathrm{N}(q_j,V,\{e_a\})$ cannot converge to $0$, which gives the desired contradiction. 
\end{proof}

We end this section by proving a supplementary result concerning the existence of normal geodesic coordinates adapted to a spacelike hypersurface $\Sigma$ through $p$ and defined on an open subset of its normal convex neighborhood $\tilde V_p$. Although this result is standard, we include a proof since we wish to keep track of various radii involved in later constructions.

\begin{prop} 
    \label{prop:FoliationByGeodesics}
    For every $p\in N$ there is a neighborhood $V$ of $p$, a frame field $\{e_a\}$ defined on $V$ with timelike $e_0$ and spacelike $e_1,\ldots, e_n$, and a radius $R_p > 0$ such that the following holds.
    \begin{enumerate}
        \item The map $F: W_{R_p}: = (-R_p,R_p) \times B^n(R_p) \to V$ defined by
        \be \label{eq:FermiCoordinates}
            F(t,\vec x) = \exp_{\exp_p(x_1e_1 + \cdots + x_ne_n)}(te_0)
        \ee
        is a diffeomorphism.
        
        \item For each $\vec x \in B^n(R_p)$, the curve $t \mapsto F(t,\vec x)$, $t\in (-R_p,R_p) $, is a geodesic for the metric $g$ with the velocity vector field $e_0 $. Every vector field in the frame field $\{e_a\}$ is parallel along this geodesic. 
        
        \item The surface 
        \be
            \Sigma := F(\{0\}\times B^n(R_p))= \EXP^{\{e_a\}}_p(\{0\}\times B^n(R_p))
        \ee
        is a smooth spacelike hypersurface. 
        
        \item The vector field $e_0$ is normal to $\Sigma$ and the vector fields $e_1,\dots, e_n$ are tangent to $\Sigma$ at every point $q \in \Sigma$. 
    \end{enumerate}
\end{prop}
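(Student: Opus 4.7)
The plan is to construct the frame $\{e_a\}$ and the map $F$ in three stages: first produce a small spacelike hypersurface $\Sigma$ through $p$ as the image of a spacelike subspace of $T_p N$ under $\exp_p$; next equip $\Sigma$ with a smooth $g$-orthonormal frame whose timelike element $e_0$ is the future unit normal and whose spacelike elements $e_i$ are tangent to $\Sigma$; and finally extend this frame to a full neighborhood of $p$ by parallel transport along the normal timelike geodesics. The diffeomorphism assertion will come from the inverse function theorem, while (2)--(4) will follow directly from the construction.

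Concretely, I would fix an orthonormal basis $\{e_a(p)\}$ of $T_p N$ with $e_0(p)$ future timelike and set
\[
    \sigma(\vec x) := \exp_p\bigl(x_1 e_1(p) + \cdots + x_n e_n(p)\bigr).
\]
For small enough radii this is a smooth embedding of a ball $B^n(r)$ onto a submanifold $\Sigma_0$ with $T_p\Sigma_0 = \mathrm{span}(e_1(p), \dots, e_n(p))$, and hence spacelike on a neighborhood of $p$ by continuity of $g$. I would take $e_0|_{\Sigma_0}$ to be the future unit normal (smooth, agreeing with $e_0(p)$ at $p$) and, starting from the pushforwards $d\sigma(\partial_{x_i})$, apply Gram--Schmidt for the Riemannian restriction $g|_{\Sigma_0}$ to produce a smooth orthonormal tangent frame $\{e_1, \dots, e_n\}$ on $\Sigma_0$ near $p$, coinciding with $\{e_i(p)\}$ at $p$. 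With $F$ defined by \eqref{eq:FermiCoordinates} using this data, one has $F(0, \vec x) = \sigma(\vec x)$, so $\partial_t F(0,\vec 0) = e_0(p)$ and $\partial_{x_i} F(0,\vec 0) = e_i(p)$; hence $dF|_{(0,\vec 0)}$ is an isomorphism and the inverse function theorem furnishes a radius $R_p > 0$ such that $F : W_{R_p} \to V := F(W_{R_p})$ is a diffeomorphism, proving (1), with $F(\{0\} \times B^n(R_p)) = \sigma(B^n(R_p)) = \Sigma$ a smooth spacelike hypersurface, proving (3).

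To finish, I extend the frame from $\Sigma$ to all of $V$ by parallel transport along the timelike geodesics $t \mapsto F(t, \vec x)$; smoothness is preserved by smooth dependence of linear ODEs on parameters, and parallel transport preserves $g$-orthonormality, so $\{e_a\}$ is a frame field on $V$. For (2) note that each $e_a$ is parallel along $\gamma_{\vec x}(t) := F(t, \vec x)$ by construction, and $e_0$ agrees with the initial geodesic velocity $\dot\gamma_{\vec x}(0) = e_0(\sigma(\vec x))$ at $t = 0$; since the velocity of a geodesic is itself parallel, the two parallel vector fields must coincide for all $t$. Property (4) is immediate from the choices made on $\Sigma$. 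The main technical hurdle is bookkeeping of radii: the $\exp_p$ embedding, the spacelike-openness of $g$, the Gram--Schmidt construction, the inverse function theorem step, and the parallel transport domain each impose their own positive lower bound, and one takes $R_p$ to be the minimum. A secondary subtlety is that a smooth $g$-orthonormal tangent frame on $\Sigma$ need not exist globally, which is why the construction is carried out locally from the coordinate pushforwards inherited from $\sigma$.
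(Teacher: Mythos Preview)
Your proposal is correct and follows the same overall strategy as the paper: build $\Sigma$ as the exponential image of a spacelike hyperplane in $T_pN$, equip it with an orthonormal frame, define $F$ as in \eqref{eq:FermiCoordinates}, invoke the inverse function theorem at $(0,\vec 0)$, and extend the frame to $V$ by parallel transport along the normal geodesics $t\mapsto F(t,\vec x)$. The only notable difference is how the frame is produced on $\Sigma$: the paper parallel transports $\{e_a(p)\}$ along the radial spacelike geodesics of $\Sigma$, whereas you take $e_0$ to be the future unit normal and apply Gram--Schmidt to the coordinate pushforwards $d\sigma(\partial_{x_i})$---your choice makes the verification of (4) entirely transparent.
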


\begin{proof}
    We recall that $\exp: \tilde V_p^T \to \tilde V_p \times \tilde V_p$ is a diffeomorphism and that $\tilde V_p$ is a convex normal neighborhood of $p$. We let $\{e_0,e_1,\ldots,e_n\}$ be any orthonormal basis of $T_pN$ where $e_0$ is a timelike unit vector and $e_1, \ldots, e_n$ are spacelike unit vectors, and define the surface 
    \be
        \Sigma_\sigma := \left\{\exp_p(x_1e_1 + \ldots + x_ne_n): \sum_{i = 1}^n x_i^2 < \sigma^2\right\}.
    \ee
    Assuming that $\sigma > 0$ is small enough, we can ensure that
    \be
        \sum_{i = 1}^n x_i^2 < \sigma^2 \implies x_1e_1 + \dots + x_ne_n \in \exp_p^{-1}(\tilde V_p).
    \ee
    Consequently, there is a $\sigma_0 > 0$, such that  for all $\sigma \in (0,\sigma_0)$ the surface $\Sigma_\sigma$ defined as above is smooth and spacelike. 
    
    Transporting the vectors $\{e_0, e_1, \dots, e_n\}$ parallelly along the radial geodesics $\gamma_u(t) := \exp_p(t(u^1e_1 + \ldots + u^ne_n))$, where $|u|^2=\sum_{i = 1}^n (u^i)^2 = 1$, we obtain a collection of orthogonal smooth vector fields $\{e_0,e_1,\dots, e_n\}$ along $\Sigma_\sigma$ such that $e_0$ is timelike and normal to $\Sigma_\sigma$ everywhere and every $e_i$ is spacelike and tangent to $\Sigma_\sigma$ everywhere.
    
    By the Fundamental Theorem of Ordinary Differential Equations we conclude that the map $F: (-\delta,\delta) \times B^n(\sigma) \to \tilde V_p$ given by
    \be
        F(t,\vec x) := \exp_{\exp_p(x_1 e_1 + \cdots + x_n e_n)}(te_0)
    \ee
    is well defined and smooth for some $\delta > 0$. Observing that
    \be
        DF_{(0,\vec 0)}(\partial_t) = e_0 \quad \text{and} \quad DF_{(0,\vec 0)}(\partial_{x_i}) = e_i, \text{ for $i = 1, \dots, n$,}
    \ee
    it follows that $DF_{(0,\vec 0)}$ is invertible and by the Inverse Function Theorem there is some $R_p \in (0,\min(\delta,\sigma))$ such that $F: W_{R_p}:=(-R_p,R_p) \times B^n(R_p) \to V := F(W_{R_p})$ is a diffeomorphism. This proves the first statement of the proposition, and the third and the fourth statements follows directly from our construction once we define $\Sigma := \Sigma_{R_p}$. To prove the second statement, we only need to extend the vector fields $\{e_0,e_1,\dots, e_n\}$ to $V$ by transporting them parallelly along the geodesics $t \mapsto F(t,\vec x)$, for $\vec x \in B^n(R_p)$. The smoothness of the resulting frame fields follows from standard arguments using theory for linear systems of ODEs, cf. \cite[Chapter 5]{Lee}.
\end{proof}

\subsection{Local estimates for Lorentzian geodesics and their Jacobi fields}
\label{subsec:LocalEstimates}
In this section we prove two technical results that will be used in our construction of uniform Temple charts. The first result is a simple consequence of the smoothness of the framed exponential map.

\begin{lem} 
    \label{lem:GoodGeodesics}
    Let $V\subset \tilde V_p$ be a neighborhood of $p\in N$, let $\{e_a\}$ be a frame field on $V$, and let $K \subset V$ be a compact set. Then, for every $\epsilon > 0$ there is a 
    \be
        \delta := \delta(K,V,\{e_a\},\epsilon) \in (0,\epsilon)
    \ee 
    such that every geodesic $\gamma$ satisfying $\gamma(0) \in K$ and 
    \be 
        \label{eq:GeodesicSmallGradient}
        \max_{0 \leq a \leq n} |g(\dot \gamma(0),e_a)| \leq \delta, 
    \ee
    is defined on the interval $[0,1]$ and satisfies $\gamma([0,1]) \subset V$. Moreover, we have 
    \be
        \max_{0 \leq a \leq n} \sup_{\lambda \in [0,1]}|g(\dot \gamma(\lambda),e_a)| \leq \epsilon.
    \ee
\end{lem}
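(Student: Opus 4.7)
My plan is to reduce the statement to a uniform continuity argument for a certain smooth function built from the framed exponential map $\EXP^{\{e_a\}}$ of Lemma~\ref{lem:framed-exp}, exploiting compactness of $K \times [0,1]$.

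First, I would translate the hypothesis (\ref{eq:GeodesicSmallGradient}) into a bound on the frame components of $\dot\gamma(0)$. Writing $\dot\gamma(0) = \sum_a y_a e_a(\gamma(0))$, the orthonormality $|g(e_a,e_b)| = \delta_{ab}$ yields $|y_a| = |g(\dot\gamma(0),e_a)|$, so the hypothesis becomes $|y_a| \le \delta$ for every $a$, and in particular $\|y\|_{\mathbb{E}^{n+1}} \le \delta\sqrt{n+1}$. Consequently, on its maximal interval of definition, $\gamma$ agrees with $\lambda \mapsto \EXP_{\gamma(0)}^{\{e_a\}}(\lambda y)$.

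Next, I would ensure that for $\delta$ small enough the curve $\lambda \mapsto \EXP^{\{e_a\}}_q(\lambda y)$ is defined on $[0,1]$ and stays inside $V$. Since the preimage $(\EXP^{\{e_a\}})^{-1}(V \times V)$ is an open subset of $V \times \mathbb{R}^{n+1}$ by Lemma~\ref{lem:framed-exp}, and it contains the compact slice $K \times \{\vec 0\}$ (because $K \subset V$), a compactness and covering argument completely analogous to the proof of Lemma~\ref{lem:EXPDiffeo} produces a radius $\delta_0 > 0$ such that
\be
    K \times \overline{B^{n+1}(\delta_0\sqrt{n+1})} \subset (\EXP^{\{e_a\}})^{-1}(V \times V).
\ee
Since this product set is star-shaped in the $y$-variable about the origin, for any $q \in K$ and any $y$ with $|y_a| \le \delta_0$ one has $\EXP^{\{e_a\}}_q(\lambda y) \in V$ for all $\lambda \in [0,1]$, so $\gamma$ is defined on $[0,1]$ and $\gamma([0,1]) \subset V$.

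Finally, I would apply uniform continuity. Define
\be
    \Psi_a(q,y,\lambda) := g\bigl(\dot\gamma_{q,y}(\lambda),\, e_a(\gamma_{q,y}(\lambda))\bigr),
    \quad \gamma_{q,y}(\lambda) := \EXP^{\{e_a\}}_q(\lambda y),
\ee
on the compact set $K \times \overline{B^{n+1}(\delta_0\sqrt{n+1})} \times [0,1]$. By smoothness of $\EXP^{\{e_a\}}$, of the frame field, and of $g$, each $\Psi_a$ is continuous. Moreover $\gamma_{q,0}$ is the constant curve at $q$ with zero velocity, so $\Psi_a(q,0,\lambda) \equiv 0$. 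Uniform continuity of $\Psi_a$ on this compact set then yields $\delta \in (0,\min(\epsilon,\delta_0))$ such that $|y_a| \le \delta$ for every $a$ forces $|\Psi_a(q,y,\lambda)| \le \epsilon$ for every $q \in K$, $\lambda \in [0,1]$ and $a \in \{0,\dots,n\}$, which is the desired conclusion. The only mild subtlety is bookkeeping: one must take the minimum with $\epsilon$ to enforce $\delta < \epsilon$ and convert the cube bound $|y_a| \le \delta$ into the ball bound required to invoke the domain inclusion above. The argument is therefore soft and the main work is already encoded in Lemmas~\ref{lem:framed-exp} and~\ref{lem:EXPDiffeo}.
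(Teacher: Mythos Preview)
Your proposal is correct and follows essentially the same approach as the paper: both arguments exploit the continuity of the framed exponential map and its velocity on a compact set containing $K\times\{0\}\times[0,1]$ to obtain the desired $\delta$. The only cosmetic difference is that the paper packages the position and velocity into a single map $\mathcal D(\lambda,q,\vec x)=(\EXP_q(\lambda\vec x),\tfrac{d}{ds}\big|_{s=\lambda}\EXP_q(s\vec x))$ and finds a product neighborhood $[0,1]\times U\times C_\delta$ mapped into $V\times E(C_\epsilon)$ in one step, whereas you first secure $\gamma([0,1])\subset V$ via a tube around $K\times\{0\}$ and then invoke uniform continuity of the component functions $\Psi_a$; the content is the same.
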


\begin{proof}
    In this proof, \( \vec x \) denotes a vector in \( \mathbb R^{n+1} \). By Lemma \ref{lem:framed-exp} we know that the map 
    \be
        \EXP: E^{-1}(\tilde V_p^T \cap TV) \to V \times \tilde V_p, \quad (q,x_0,\ldots,x_n) \mapsto (q,\EXP_q( x_0,\ldots,  x_n))
    \ee
    is a diffeomorphism. Consequently, recalling that  $\tilde V_p$ is geodesically convex, it follows that the map 
    \be
        \mathcal D: [0,1]  \times E^{-1}(\tilde V_p^T\cap TV) \to TN, 
    \ee
    given by 
    \be
        \mathcal D(\lambda, q, \vec x) = \left(\EXP_q(\lambda \vec x),\frac{d}{ds}\bigg|_{s = \lambda}\EXP_q(s \vec x)\right),
    \ee
    is well-defined and continuous. Since 
    \be
        \mathcal D(\lambda, q, \vec 0) = (q, \underbrace{0\cdot e_0+\ldots + 0 \cdot e_n}_{=\vec 0_q \in T_q N})
    \ee
    holds for all $\lambda \in [0,1]$ we have  
    \be
        \label{eqZeroSection}
        \mathcal D([0,1]\times K \times \{\vec 0\} ) = \{(p,\vec 0_p) : p \in K, \, \vec 0_p \in T_p N\},
    \ee
    the zero section of the tangent bundle $TK$.
    
    Now let $V$ be an open set containing $K$ as in the statement of the lemma. Given any $\epsilon>0$ we define the \emph{cube of radius $\epsilon$} in $\mathbb{R}^{n+1}$ by
    \be
        \label{eqCube}
        C_\epsilon  = \left\{(x_0,x_1,\dots, x_n)\in \mathbb{R}^{n+1} : \, \max_{0 \leq a \leq n} |x_a| < \epsilon\right\}.
    \ee
    Clearly, an open set 
    \be
        V\times E(C_\epsilon)=\left\{(p,v) \in TV : \, |g(e_a,v)| < \epsilon \text{ for all $a = 0,\dots, n$}\right\}
    \ee
    is a neighborghood of the compact set $\mathcal D([0,1]\times K \times \{\vec 0\} )$, see \eqref{eqZeroSection}, and we have
    \be
        [0,1]\times K \times \{\vec 0\} \subset   [0,1]\times E^{-1}(\tilde V_p^T\cap TV).
    \ee
    Consequently, there is an open neighborhood $W$ of $[0,1]\times K \times \{\vec 0\} $ in $[0,1]  \times E^{-1}(\tilde V_p^T\cap TV)$ such that $\mathcal D(W) \subset V \times E ( C_\epsilon)$. Without loss of generality, we may assume that this neighborhood is of the form $W = [0,1] \times U \times C_{\delta}$ where $U$ is an open set such that $K\subset U \subset V$, and $C_\delta$ is defined by \eqref{eqCube} for some $\delta \in (0,\epsilon)$.  
    
    The above implies that any geodesic $\gamma$ with $\gamma(0) \in K$ such that \eqref{eq:GeodesicSmallGradient} holds is defined on the interval $[0,1]$, satisfies $\gamma([0,1]) \subset V$ and we have 
    \be
        \max_{0 \leq a \leq n}|g(\dot \gamma(\lambda),e_a)| < \epsilon \text{ for all $\lambda \in [0,1]$}
    \ee
    as claimed.
\end{proof}

The following result states, roughly speaking, that an initially timelike Jacobi field along a geodesic within $\tilde V_p$ will stay timelike as long as we remain in the neighborhood.

\begin{restatable}{lem}{JacobiFields}
    \label{lem:JacobiTime}
    Suppose that $K \subset \tilde V_p$ is a compact set, that $\{e_a\}=\{e_0,\ldots,e_n\}$ is a frame field defined on an open neighborhood $V$ of $K$ with timelike $e_0$ and spacelike $e_1,\ldots, e_n$, and that $\gamma: [0,1] \to K$ is a geodesic such that 
    \be 
        \label{eq:GradientCondition}
        \sup_{\lambda \in [0,1]} \max_{0 \leq a \leq n} |g(\dot \gamma(\lambda),e_a)| \leq \epsilon \quad \text{for all $\lambda \in [0,1]$}.
    \ee
    Then there is a constant $C > 0$ depending only on the metric $g$ and on the frame field $\{e_a\}$ restricted to the compact set $K$, such that the Jacobi field $J = J(\lambda)$ along $\gamma$ defined by the initial conditions $J(0) = e_0$ and $D_{\lambda}J(0) = 0$ satisfies
    \be 
        \label{eq:JacobiEstimate}
        \max_{0 \leq a \leq n}\sup_{\lambda \in [0,1]} |g(J-e_0,e_a)| < C\epsilon, \,\text{ and hence also } \, \sup_{\lambda \in [0,1]}|g(J,J) + 1| < C\epsilon.
    \ee
    In particular, there is an $\epsilon_0 = \epsilon_0(K,\{e_a\}) > 0$ such that if \eqref{eq:GradientCondition} holds with $\epsilon \leq \epsilon_0$, then the vector $J(\lambda)$ is timelike for all $\lambda \in [0,1]$.
\end{restatable}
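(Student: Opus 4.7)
The plan is to expand the Jacobi field in the frame $\{e_a\}$ along $\gamma$ and reduce \eqref{eq:JacobiEstimate} to a Grönwall estimate for a linear ODE system with coefficients of size $O(\epsilon)$. First, write $J(\lambda) = f^a(\lambda)\, e_a$ (summed over $a = 0,\dots, n$), so orthonormality gives $-f^0 = g(J,e_0)$ and $f^i = g(J,e_i)$, with initial data $f^0(0) = 1$, $f^i(0) = 0$, $\dot f^a(0) = 0$. Writing $\dot\gamma = \xi^c e_c$, the hypothesis \eqref{eq:GradientCondition} together with orthonormality gives $|\xi^c(\lambda)| \leq \epsilon$. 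Define the connection functions along $\gamma$ by $D_\lambda e_a = \omega_a^b\, e_b$; then $\omega_a^b = \xi^c\, \Gamma_{ca}^b$, where $\Gamma_{ca}^b := \pm g(\nabla_{e_c} e_a, e_b)$ are smooth functions on $V$, hence uniformly bounded on $K$. This yields $|\omega_a^b(\lambda)| \leq C_1 \epsilon$ along $\gamma$, and the geodesic equation $\dot\xi^b = -\xi^a \omega_a^b = O(\epsilon^2)$ lets one differentiate $\omega_a^b = \xi^c \Gamma_{ca}^b$ along $\gamma$ to conclude $|\dot\omega_a^b| \leq C_2 \epsilon$.

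A direct computation of $D_\lambda^2 J$ in the frame produces
\be
    D_\lambda^2 J = \bigl( \ddot f^d + 2 \omega_a^d \dot f^a + (\dot\omega_a^d + \omega_a^c \omega_c^d) f^a \bigr) e_d,
\ee
while $R(J,\dot\gamma)\dot\gamma = f^a \xi^b \xi^c R^d_{abc}\, e_d$ for smooth, bounded frame components of the curvature tensor. Substituting into the Jacobi equation turns it into the linear system $\ddot f^d = A^d_a(\lambda)\, \dot f^a + B^d_a(\lambda)\, f^a$ with $|A^d_a|, |B^d_a| \leq C\epsilon$ uniformly on $[0,1]$, for a constant $C = C(K, g, \{e_a\})$. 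Setting $h^0 := f^0 - 1$ and $h^i := f^i$, this system becomes inhomogeneous:
\be
    \ddot h^d = A^d_a \dot h^a + B^d_a h^a + B^d_0, \qquad h^a(0) = 0,\ \dot h^a(0) = 0,
\ee
with source $|B^d_0| \leq C\epsilon$. Reducing to a first-order system on $\mathbb R^{2(n+1)}$ and invoking Grönwall's inequality on the fixed interval $[0,1]$ yields $|h^a(\lambda)| \leq C'\epsilon$ uniformly, which is the first half of \eqref{eq:JacobiEstimate} since $g(J-e_0, e_0) = -h^0$ and $g(J-e_0, e_i) = h^i$.

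For the second estimate, expand $g(J,J) + 1 = -2 h^0 - (h^0)^2 + \sum_{i=1}^n (h^i)^2$, which is also bounded by a multiple of $\epsilon$ after enlarging $C$ if necessary (assuming $\epsilon \leq 1$, say). The final claim follows by choosing $\epsilon_0 > 0$ with $C\epsilon_0 < 1$, so that $g(J,J) < 0$ and hence $J(\lambda)$ is timelike on $[0,1]$. The principal bookkeeping issue is ensuring that every constant above depends only on $(K, g, \{e_a\})$ and not on the particular geodesic; this is possible because the frame field $\{e_a\}$ and the metric $g$ are smooth on $V$ and Lemma \ref{lem:GoodGeodesics} guarantees $\gamma([0,1]) \subset K$, providing the uniform bounds on all connection and curvature components that feed into the ODE.
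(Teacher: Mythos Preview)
Your approach is correct and is genuinely different from the paper's. You work directly in the given frame $\{e_a\}$ and exploit the key observation that the connection forms $\omega_a^b = \xi^c\Gamma_{ca}^b$ along $\gamma$ are themselves $O(\epsilon)$, so the Jacobi equation becomes a second-order linear ODE with small coefficients and small source, and a single Gr\"onwall on the fixed interval $[0,1]$ closes the estimate. The paper instead introduces an auxiliary \emph{parallel} frame $\{\hat e_a\}$ along $\gamma$ (so that $D_\lambda\hat e_a=0$), carries out the Jacobi-field analysis there (where $D_\lambda^2 J$ has no connection terms and one gets the sharper bound $|f_a(\lambda)-f_a(0)|\le C\epsilon^2$), and then transfers back to $\{e_a\}$ via a separate estimate $|\hat e_a - e_a|_{g_R}\le C\epsilon$. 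Your route is shorter and more elementary; the paper's route cleanly separates the Jacobi analysis from the frame comparison and yields the intermediate $O(\epsilon^2)$ bound in the parallel frame, at the cost of the extra bookkeeping of comparing $\{\hat e_a\}$ with $\{e_a\}$.

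One small correction: your claim $\dot f^a(0)=0$ is not quite right. The initial condition is $D_\lambda J(0)=0$, which in the non-parallel frame reads $\dot f^b(0)+f^a(0)\omega_a^b(0)=0$, hence $\dot f^b(0)=-\omega_0^b(0)=O(\epsilon)$. This does not affect the argument: the first-order system $y'=My+c$ has $|y(0)|=O(\epsilon)$ rather than $0$, and Gr\"onwall on $[0,1]$ still gives $|y|\le C'\epsilon$. Also, the final appeal to Lemma~\ref{lem:GoodGeodesics} is unnecessary here, since the hypothesis of the present lemma already assumes $\gamma([0,1])\subset K$.
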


\begin{proof}
    Let $\{\omega^a\}=\{\omega^0,\ldots,\omega^n\}$ denote the dual frame to $\{e_a\}=\{e_0,\ldots,e_n\}$ defined by $\omega^a(e_b)=\delta_b^a$, and let $g^{-1}$ be the dual metric  defined by $\sum_{c=1}^{n+1}g^{-1}(\omega^a,\omega^c) g(e_c,e_b)=\delta_b^a$. As discussed in Section \ref{sec:notations}, we can use the given frame field $\{e_a\}=\{e_0,\ldots,e_n\}$ to equip $V$ with a Riemannian metric $g_R$ defined by 
    \be
        \label{eqRiemNear}
        g_R(X,Y) := 2g(X,e_0)g(Y,e_0) + g(X,Y).
    \ee
    Since $\{e_a\}$ remains orthonormal with respect to $g_R$ (see \eqref{eq:g_R-orthog}) we have 
    \be
        \label{eq:g-g_R-norm}
        |g|_{g_R} =  |g^{-1}|_{g_R} = \sqrt{n}.
    \ee
    The norms here are the usual tensor norms with respect to the Riemannian metric $g_R$ which can be conveniently computed using the frame fields $\{e_a\}$ and their duals $\{\omega_a\}$, see for example \cite[Chapter 3]{O'neill-text}. We note also that we have 
    \be
        \label{eq:g-vs -g_R}
        |g_R(X,e_a)| = |g(X,e_a)| \text{ for any vector field } X \text{ on } \tilde V_p \text{ and } a=0,1,\ldots, n.
    \ee
    Furthermore, given a compact set $K\subset \tilde V_p$ as in the statement of the lemma, there is a constant $C_0 > 0$ depending only  on $g$, $\{e_a\}$, and $K$, such that the Riemann curvature tensor $R^g$ of the smooth Lorentzian metric $g$ and the Christoffel symbols $\Gamma_{ij}^k$ defined by $ \nabla^g_{e_i}e_j = \Gamma_{ij}^k e_k$ satisfy
    \be 
        \label{eq:AppendixBoundsNeededGeometric}
        \max_K |R^g|_{g_R} \leq C_0 \quad \text{ and } \quad \max_{i,j,k \in\{0,\dots, n\}} \max_K  |\Gamma_{ij}^k| \leq C_0.
    \ee
    Throughout the rest of the proof, $C$ denotes a positive constant that may change from line to line but that is only allowed to depend on the above constant $C_0 > 0$ and the dimension of the spacetime $(N,g)$. 

    For computations involving Jacobi fields it is more convenient to use frame fields  that are parallel along their geodesic, rather than the background frame field $\{e_a\}$ which may not have this property. Therefore we will temporarily switch to the orthonormal frame $\{\hat e_a\}=\{\hat e_0, \hat e_1, \dots, \hat e_n\}$ obtained by the parallel transport  (with respect to $g$) of the vectors $e_a(\gamma(0))$, $a = 0,\dots, n$, along the geodesic $\gamma$. Similar to \eqref{eqRiemNear}, we can Riemannianize the metric $g$ along $\gamma$ as follows:
    \be
        \label{eq:RiemannianizedHat}
        \hat g_R(X,Y) := 2g(X,\hat e_0)g(Y,\hat e_0) + g(X,Y) \text{ for } X,Y \in T_{\gamma(\lambda)}N \text{ and } \lambda\in [0,1].
    \ee
    Clearly, $\{\hat e_a\}$ is orthonormal with respect to $\hat g_R$ and we have \be\label{eq:g-vs-hat-g_R}
        |\hat g_R(X,\hat e_a)| = |g(X,\hat e_a)| \text{ for any vector field } X \text{  along } \gamma \text{ and } a=0,1,\ldots, n.
   \ee
    Moreover, since $D_\lambda \hat e_0 = 0$, where $D_\lambda$ denotes the covariant derivative of the metric $g$ along $\gamma$, it is easy to check that 
    \be 
        \label{eq:CovariantHat}
        D_\lambda \hat g_R(X,Y) = \hat g_R(D_\lambda X,Y) + \hat g_R(X,D_\lambda Y) 
    \ee
    for any vector fields $X$ and $Y$  along $ \gamma$.
    
    In order to prove the lemma, we will first establish the analogue of the estimates \eqref{eq:JacobiEstimate} in the frame $\{\hat e_0, \hat e_1, \dots, \hat e_n\}$. For this, we define $f=f(\lambda)$, $\lambda\in [0,1]$, by 
    \be
        f := \langle J,J\rangle_{\hat g_R} + \langle D_\lambda J,D_\lambda J\rangle_{\hat g_R}. 
    \ee
    Clearly, we have $f\geq 0$ and we claim that there is a constant $C$  as described above such that $ f \leq C$ for all $\lambda \in [0,1]$. To see this, we note that
    \be
        \label{eq:DifferentialInequalityF}
        \begin{aligned}
            f'
            & = 2\langle J,D_\lambda J\rangle_{\hat g_R} + 2\langle D_\lambda J,D_\lambda^2 J\rangle_{\hat g_R} \\ 
            & = 2\langle J,D_\lambda J\rangle_{\hat g_R} - 2\langle D_\lambda J,R^g(J, \dot \gamma)\dot \gamma\rangle_{\hat g_R} \\
            & \leq 2|J|_{\hat g_R}|D_\lambda J|_{\hat g_R} + 2|D_\lambda J|_{\hat g_R} |R^g|_{\hat g_R}|J|_{\hat g_R}|\dot \gamma|_{\hat g_R}^2 \\ 
            & \leq (|J|_{\hat g_R}^2 + |D_\lambda J|_{\hat g_R}^2)(1 + |R^g|_{\hat g_R}|\dot \gamma|_{\hat g_R}^2) \\
            & = f(1 + |R^g|_{\hat g_R}|\dot \gamma|_{\hat g_R}^2).
        \end{aligned}
    \ee
    Here we have used \eqref{eq:CovariantHat} in the first line, the Jacobi equation 
    \be 
        \label{eq:JacobiEquation}
        D^2_\lambda J + R^g(J, \dot \gamma) \dot \gamma = 0
    \ee 
    in the second line, the Cauchy-Schwarz inequality for the Riemannian metric \(\hat g_R\) in the third line and the elementary inequality \( 2ab \leq a^2 + b^2 \) in the fourth line.

    Now let us assume for the moment that the analogues of the estimate \eqref{eq:GradientCondition} and the first bound in \eqref{eq:AppendixBoundsNeededGeometric} hold along $\gamma$ with respect to the metric $\hat g_R$, or, more specifically, that we have
    \be 
        \label{eq:EstimateODECoefficient}
        |R^g|_{\hat g_R}\leq C \quad \text{and} \quad |\dot\gamma|_{\hat g_R}^2 \leq C\epsilon^2 \quad \text{on } \gamma
    \ee
    for $C$ as described above (we will verify this claim later). In this case it follows from \eqref{eq:DifferentialInequalityF} that $f' \leq Cf$. Integrating this inequality we obtain 
    \be
        \label{eqTrivialEstimate}
        0 \leq f = |J|^2_{\hat g_R} + | D_\lambda J|^2_{\hat g_R} \leq C, 
    \ee
    recalling our convention for the constant $C$. 
    
    We will now refine this estimate, proving that the analogue of \eqref{eq:JacobiEstimate} holds with respect to the parallel frame $\{\hat{e}_a\}$ along the geodesic $\gamma$. This will be achieved by estimating the coefficients $f_a= \hat g_R(J,\hat e_a)$ in the expansion $J = \sum_{a = 0}^n f_a\hat e_a$, while still assuming \eqref{eq:EstimateODECoefficient} which remains to be proven.  Since each $\hat e_a$ is parallel along $\gamma$ we find that $D_\lambda^2J = \sum_{a = 0}^n f_a''\hat e_a$ and by the Jacobi equation \eqref{eq:JacobiEquation} we have
    \be
        \sum_{a = 0}^n (f_a'')^2 = |D_\lambda^2 J|_{\hat g_R}^2 = |R^g(J,\dot \gamma)\dot \gamma |_{\hat g_R}^2 \leq |J|_{\hat g_R}^2|R^g|_{\hat g_R}^2|\dot \gamma|_{\hat g_R}^4 \leq C\epsilon^4,
    \ee
    where we have used \eqref{eq:EstimateODECoefficient} and \eqref{eqTrivialEstimate}. Thus for all $a = 0,\dots, n$ we have $|f''_a| \leq C\epsilon^2$. Integrating twice, we find that
    \be
        |f_a(\lambda) - f_a(0) -  f_a'(0)\lambda| \leq C\epsilon^2. 
    \ee
    Further, the initial conditions $J(0) = e_0$, $D_\lambda J(0) = 0$ imply that $f_0(0) = 1$, $f_i(0) = 0$ for $i = 1,\dots, n$ and $f_a'(0) = 0$ for $a = 0, \dots, n$. Summing up, we obtain
    \be 
        \label{eq:HatCoefficientsJacobi}
        |f_0(\lambda) - 1| \leq C\epsilon^2, \quad |f_i(\lambda)| \leq C\epsilon^2, \quad \text{ for all } \lambda \in [0,1],
    \ee
    proving the analogue of the estimates \eqref{eq:JacobiEstimate} in the frame $\{\hat e_a\}$.

    Next, we turn to proving the estimates \eqref{eq:EstimateODECoefficient}, that we have so far assumed to hold along $\gamma$. As a first step, we will show that for all $\lambda \in [0,1]$ and $a = 0,\dots, n$, we have
    \be
        \label{eq:AppendixHatNorm}
        |\hat e_a|_{g_R} = \left(\sum_{b = 0}^n g_R(\hat e_a,e_b)^2\right)^{1/2} = \left(\sum_{b = 0}^n g(\hat e_a,e_b)^2\right)^{1/2} \leq C
    \ee
     where the constant $C>0$ is as described above. For this, we note that for all $a = 0,\dots, n$ we have
    \be 
        \label{eq:AppendixParallellTranslatesNorm}
        D_\lambda |\hat e_a|_{g_R}^2 = D_\lambda \sum_{b = 0}^n g(\hat e_a,e_b)^2
        \leq \sum_{b = 0}^n 2|g(\hat e_a,e_b)|\, |D_\lambda g(\hat e_a,e_b)|.
    \ee
    Since each $\hat e_a$ is parallel along $\gamma$ with respect to the Lorentzian metric $g$, for all $a,b = 0,\dots, n$ we have 
    \be 
        \label{eq:AppendixMixedProductBound}
        \begin{split}
            |D_\lambda g(\hat e_a,e_b)|
            & = |g(\hat e_a,D_{\lambda} e_b)| 
            \leq \sum_{c = 0}^n |g(\hat e_a,e_c)|\,|g(e_c,D_{\lambda} e_b)| \\
            & = \sum_{c,d = 0}^n |g(\hat e_a,e_c)|\,|g(\dot \gamma,e_d)|\,|\Gamma_{db}^c|
            \leq C_0\epsilon\sum_{c = 0}^n |g(\hat e_a,e_c)|
        \end{split}
    \ee
    where in the last step we used  \eqref{eq:GradientCondition} and the second bound in \eqref{eq:AppendixBoundsNeededGeometric}. Combining \eqref{eq:AppendixParallellTranslatesNorm} and \eqref{eq:AppendixMixedProductBound} we obtain
    \be
        \begin{aligned}
        D_\lambda |\hat e_a|_{g_R}^2 
        & \leq C\epsilon \sum_{b,c = 0}^n |g(\hat e_a,e_b)|\, |g(\hat e_a,e_c)| \\
        & = C\epsilon \sum_{b,c = 0}^n |g_R(\hat e_a,e_b)|\, |g_R(\hat e_a,e_c)| \\
        & \leq C\epsilon|\hat e_a|_{g_R}^2, 
        \end{aligned}
    \ee
    where we used \eqref{eq:g-vs -g_R} in the second line and the Cauchy-Schwarz inequality for the Riemannian metric \( g_R \) in the third line. Integrating this differential inequality we obtain \eqref{eq:AppendixHatNorm} after recalling that $|\hat e_a(\gamma(0))|_{g_R} = |e_a(\gamma(0))|_{g_R} = 1$.
    
    With \eqref{eq:AppendixHatNorm} at hand it is now straightforward to prove \eqref{eq:EstimateODECoefficient}. Indeed, using \eqref{eq:g-vs-hat-g_R}, \eqref{eq:GradientCondition}, \eqref{eq:g-g_R-norm},  and \eqref{eq:AppendixHatNorm}, we find that
    \be
        |\dot\gamma|_{\hat g_R}^2 = \sum_{a = 0}^n \hat g_R(\dot\gamma, \hat e_a)^2 =  \sum_{a = 0}^n g(\dot\gamma, \hat e_a)^2 \leq \sum_{a = 0}^n |g|_{g_R}^2 \, |\dot \gamma |_{g_R}^2 \, |\hat e_a|_{g_R}^2 \leq C\epsilon^2,
    \ee
    and similarly, using \eqref{eq:g-vs-hat-g_R}, \eqref{eq:g-g_R-norm}, \eqref{eq:AppendixBoundsNeededGeometric} and \eqref{eq:AppendixHatNorm}, we get
    \be
        \begin{split}
            |R^g|_{\hat g_R}^2
            & = \sum_{a,b,c,d = 0}^n \hat g_R(R^g(\hat e_a,\hat e_b)\hat e_c, \hat e_d)^2
            = \sum_{a,b,c,d = 0}^n g(R^g(\hat e_a,\hat e_b)\hat e_c, \hat e_d)^2 \\
            & \leq \sum_{a,b,c,d = 0}^n |g|_{g_R}^2 |R^g|_{g_R}^2 |\hat e_a|_{g_R}^2|\hat e_b|_{g_R}^2|\hat e_c|_{g_R}^2|\hat e_d|_{g_R}^2 \leq C.
        \end{split}
    \ee
    For completing the proof, we need to switch from the frame $\{\hat e_a\}$ along $\gamma$ to the background frame $\{e_a\}$ defined on all of $K$, so that we can transform the estimate \eqref{eq:HatCoefficientsJacobi} to \eqref{eq:JacobiEstimate}. For this, we will need a refined version of the estimate \eqref{eq:AppendixHatNorm}, namely 
    \be 
        \label{eq:AppendixDifferenceNorm}
        |\hat e_a - e_a|_{g_R} = \left(\sum_{b = 0}^n g_R(\hat e_a - e_a,e_b)^2\right)^{1/2} \leq C\epsilon.
    \ee
    The proof is very similar to our previous arguments. Indeed, the bounds \eqref{eq:AppendixMixedProductBound} and \eqref{eq:AppendixHatNorm} together with a Cauchy-Schwartz inequality with respect to $g_R$ and \eqref{eq:g-g_R-norm} imply that $|D_\lambda g(\hat e_a,e_b)| \leq C\epsilon$, for all $a,b = 0,\dots, n$. Consequently, for $a = 0,\dots, n$, we have
    \be
        \begin{split}
            D_\lambda |\hat e_a - e_a|_{g_R}^2 
            & = D_\lambda \sum_{b = 0}^n g(\hat e_a - e_a,e_b)^2 = 2\sum_{b = 0}^n g(\hat e_a - e_a,e_b)D_\lambda g(\hat e_a,e_b) \\
            & \leq C\epsilon\sum_{b = 0}^n |g(\hat e_a - e_a,e_b)|
            \leq C\epsilon \left(\sum_{b = 0}^n g(\hat e_a - e_a,e_b)^2\right)^{1/2},
        \end{split}
    \ee
    so that $D_\lambda |\hat e_a - e_a|_{g_R}^2 \leq C\epsilon|\hat e_a - e_a|_{g_R}$. This yields $D_\lambda |\hat e_a - e_a|_{g_R} \leq C\epsilon$, which together with the initial condition $\hat e_a(\gamma(0)) = e_a(\gamma(0))$ implies \eqref{eq:AppendixDifferenceNorm}.
    
    Finally, we can prove our Lemma, namely \eqref{eq:JacobiEstimate}. Using \eqref{eq:HatCoefficientsJacobi} and \eqref{eq:AppendixHatNorm} it follows that
    \be
        |J|^2_{g_R} = \sum_{a = 0}^n |g_R(J,e_a)|^2=\sum_{a,b = 0}^n |f_b g_R(\hat e_b,e_a)|^2 = \sum_{a,b = 0}^n |f_b|^2\,|g_R(\hat e_b,e_a)|^2 \leq C.
    \ee
    With this bound at hand, using \eqref{eq:RiemannianizedHat}, \eqref{eq:HatCoefficientsJacobi},  and \eqref{eq:AppendixDifferenceNorm}  we find that
    \be
        \begin{split}
            |g(J - e_0,e_0)|
            & = |g(J,e_0) + 1|\\
            & = |g(J,e_0 - \hat e_0) + g(J,\hat e_0) + 1| \\
            & = |g(J,e_0 - \hat e_0) - \hat g_R(J,\hat e_0) + 1| \\
            & \leq |g(J,e_0 - \hat e_0)| + |-f_0 + 1| \\
            & \leq |g|_{g_R} |J|_{g_R} |e_0 - \hat e_0|_{g_R}  + C\epsilon^2 \leq C\epsilon.
        \end{split}
    \ee
    Similarly, for $i = 1,\dots, n$ we have
    \be
        \begin{split}
            |g(J - e_0,e_i)|
            & = |g(J,e_i)| \\
            &= |g(J,e_i - \hat e_i) + g(J,\hat e_i)|\\
            &= |g(J,e_i - \hat e_i) + \hat g_R(J,\hat e_i)|\\
            &\leq |g(J,e_i - \hat e_i)| + |f_i| \\
            & \leq |g|_{g_R} |J|_{g_R} |e_i - \hat e_i|_{g_R} + C\epsilon^2 \leq C\epsilon,
        \end{split}
    \ee
    completing the proof of \eqref{eq:JacobiEstimate}. 
\end{proof}

\section{Uniform Temple Charts} \label{sec:Temple} 
\subsection{Review of Temple Charts}
In 1938 Temple proved the following theorem  which was
stated in modern terminology using exponential maps as follows by Sakovich and Sormani in \cite{Sak-Sor-null}: 

\begin{thm}{\textnormal{\cite[Section 3]{Temple-1938}}}
    \label{thm-opt}
    Given any $q \in N$, let $\eta:(-\epsilon, \epsilon) \to N$ be a unit speed future timelike geodesic through $\eta(0) = q$. Let $e_0 = \dot \eta(0)$ and let $e_1,...,e_n \in T_q N$ be an orthonormal collection of spacelike vectors such that $e_i+e_0$, $i=1,\ldots,n$, is future null. We extend this frame by parallel transport along $\eta$ noting that since $\eta$ is a geodesic, $\dot \eta(t)=e_{0}$ at $\eta(t)$ for all $t\in(-\epsilon,\epsilon)$. 
    
    Noting that for any $x=(x_1,\ldots,x_n) \in \mathbb R^n$ and $t\in (-\epsilon,\epsilon)$ the vector 
    \be
        \sum_{i = 1}^n x_i e_i + |x| \dot \eta(t) \in T_{\eta(t)}N \, \text{ is null},
    \ee 
    we define a {\bf Temple chart} $\Phi_q=\Phi_{q,\eta}$ by
    \be 
        \label{eqTempleClassic}
        \Phi_q(t, \vec x) = \exp_{\eta(t)}\left(|\vec x|e_0 + \sum_{i=1}^n x^i e_i\right).
    \ee
    The chart $\Phi_q:W_q \to \Phi_q(W_q)$ is continuous and invertible on a neighborhood $W_q$ of $(-\epsilon, \epsilon) \times \{0\}^n$ in $\mathbb{R}^{n+1}$ and is smooth in this neighborhood away from $(-\epsilon,\epsilon) \times \{0\}^n$. In this chart, we define the {\bf optical function} $\omega_q = \omega_{q,\eta}$ by
    \be 
        \label{eq:OpticalFunction}
        \omega_q(\Phi_q(t, \vec x)) = t
    \ee
    and the {\bf radial function} $\lambda_q = \lambda_{q,\eta}$ by
    \be 
        \label{eq:RadialCoordinate}
        \lambda_q( \Phi_q(t, \vec x) ) = \sqrt{x_1^2+\cdots+x_n^2}.
    \ee
\end{thm}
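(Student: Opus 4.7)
The map $\Phi_q(t,\vec x) = \exp_{\eta(t)}(|\vec x|e_0 + \sum_{i=1}^n x^i e_i)$ factors as
\[
  (t,\vec x) \;\longmapsto\; \bigl(\eta(t),\, |\vec x|e_0 + \sum_{i=1}^n x^i e_i\bigr) \;\stackrel{\exp}{\longmapsto}\; \Phi_q(t,\vec x),
\]
where the second step is smooth (on a neighborhood of the zero section) and the first is smooth wherever $\vec x \neq \vec 0$, since the parallel transport of the $e_a$ along $\eta$ is smooth and $|\cdot|$ is smooth on $\mathbb{R}^n \setminus \{\vec 0\}$. Both pieces are continuous everywhere, with $\Phi_q(t, \vec 0) = \eta(t)$. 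This yields the smoothness off the axis and the continuity on all of $W_q$.

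The substance of the theorem is invertibility, which I would prove by constructing the inverse explicitly inside a convex normal neighborhood $\tilde V_q$ of $q$. For $p \in \tilde V_q$ and $t$ in a small interval, define Synge's world function
\[
\phi_p(t) := g_{\eta(t)}\!\bigl(\exp_{\eta(t)}^{-1}(p),\, \exp_{\eta(t)}^{-1}(p)\bigr).
\]
Solving $\Phi_q(t,\vec x)=p$ with $\vec x\neq\vec 0$ is equivalent to the pair of conditions (a) $\phi_p(t)=0$, so that $\exp_{\eta(t)}^{-1}(p)$ is null, and (b) $g_{\eta(t)}(\dot\eta(t), \exp_{\eta(t)}^{-1}(p)) < 0$, so that this null vector is future-pointing. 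A first-variation computation on the family of geodesics from $\eta(t)$ to $p$ yields
\[
\phi_p'(t) = -2\, g_{\eta(t)}\!\bigl(\dot\eta(t),\, \exp_{\eta(t)}^{-1}(p)\bigr),
\]
and whenever $p = \exp_{\eta(t_0)}(s_0(e_0 + \sum u_0^i e_i))$ with $s_0 > 0$ and $|\vec u_0| = 1$, this evaluates to $+2s_0 > 0$. The Implicit Function Theorem then delivers a unique smooth $t = \omega_q(p)$ satisfying (a)--(b) for $p$ near but not on $\eta$. Decomposing $v := \exp_{\eta(t)}^{-1}(p) = y_0 e_0 + \sum y^i e_i$ in the parallel frame, the future-null condition forces $y_0 = |\vec y|$, so $\vec x := \vec y$ is the unique preimage. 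On the axis itself, set $\omega_q(\eta(t_0)) := t_0$, which is unambiguous since $\eta|_{(-\epsilon,\epsilon)}$ is injective.

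The main obstacle will be continuity of the inverse across the axis, where $\exp_{\eta(t_0)}^{-1}(\eta(t_0)) = 0$, so $\phi_p'(t_0) = 0$ and the Implicit Function Theorem breaks down. I would handle this by a direct limiting argument: as $p \to \eta(t_0)$, the smooth functions $\phi_p$ converge uniformly on a small interval around $t_0$ to $\phi_{\eta(t_0)}(t) = -(t-t_0)^2$, whose double zero at $t_0$ forces any nearby transverse zero $\omega_q(p)$ to tend to $t_0$; continuity of $\vec x$ then follows from continuity of $v = \exp_{\eta(t)}^{-1}(p)$ in $(t,p)$. A secondary issue is global uniqueness of the zero on $W_q$: a priori $\phi_p$ could admit both a future-cone and a past-cone zero in $(-\epsilon,\epsilon)$. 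The sign condition (b) selects the correct one, and by shrinking $\epsilon$ and taking $W_q$ small enough that $\Phi_q(W_q) \subset \tilde V_q$, one ensures at most one future-cone zero in the relevant interval for each $p$, completing the proof.
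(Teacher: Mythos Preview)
The paper does not prove this theorem; it is stated as a citation of Temple's 1938 result, restated in modern notation following Sakovich--Sormani. There is therefore no ``paper's own proof'' to compare against directly.

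That said, the paper does reprove essentially the same assertions in the course of establishing the uniform version, Theorem~\ref{th-uniform-Temple}, and the route taken there is genuinely different from yours. Your argument constructs the inverse explicitly via Synge's world function $\phi_p(t)=g(\exp_{\eta(t)}^{-1}(p),\exp_{\eta(t)}^{-1}(p))$ and the Implicit Function Theorem, handling the axis by a limiting argument on the double zero of $-(t-t_0)^2$. The paper instead proves injectivity of $\Phi_q$ by a pure causality argument (Proposition~\ref{prop:TempleInjectivity}): if $\Phi_q(t,\vec x)=\Phi_q(s,\vec y)$ with $s<t$, the common image point would lie simultaneously on a null geodesic from $\eta(s)$ and in $I^+(\eta(s))$, forcing two distinct geodesics between the same pair of points in a convex normal neighborhood. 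Having injectivity, the paper invokes Brouwer's invariance of domain to upgrade the continuous injection to a homeomorphism, and obtains the diffeomorphism away from the axis by showing $(\Phi_q)_*\partial_t$ is a timelike Jacobi field (Lemma~\ref{lem:JacobiTime}), hence transverse to the null cone.

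Your approach is correct and arguably more direct for the single-chart statement: it gives smoothness of $\omega_q$ off the axis immediately from the IFT, without Jacobi field estimates. The paper's approach, on the other hand, is tailored to the uniform problem: the causality argument and the Jacobi estimates are stated on a fixed compact set $K_p$ with constants independent of the basepoint $q$, which is exactly what is needed later to make the Temple neighborhoods uniform. Your world-function method would require additional work to make the IFT constants uniform in $q$.
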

\begin{figure}[h]
    \centering
    \includegraphics[width=1\textwidth]{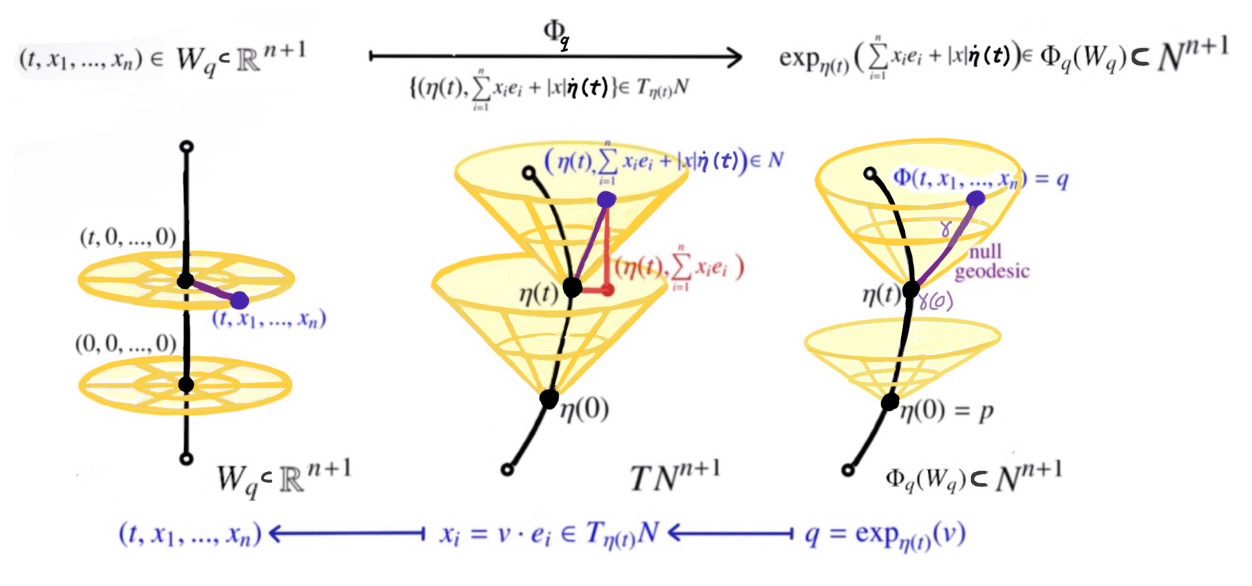} 
    \caption{ Here we see Temple's chart, $\Phi_{q}=\Phi_{q,\eta}:W_q\subset {\mathbb R}^{n+1} \to \Phi_{q}(W_q)\subset N$ as described in Theorem~\ref{thm-opt}.}
    \label{fig:Temple-precise}
\end{figure}

A key advantage of this chart is that the optical function $\omega_q$ can be used as an indicator of the causal future of a point $q$ in the following sense: for any $q' \in \Phi_q(W_q)$ we have 
\be
    \label{eq:IndicatorCausalFuture}
    \omega_q(q') \ge 0 \Leftrightarrow q' \in J^+(q,\Phi_q(W_q)),
\ee
see \cite[Lemma 3.6]{Sak-Sor-null}. 

The aim of this section is to prove Theorem \ref{thm:unif-Temple-intro} stated in the introduction (see also Theorem \ref{th-uniform-Temple} below) establishing the existence of the so called uniform Temple charts. Namely, we will show that for any $p \in N$ there is a neighborhood $U_p$, and a collection of Temple charts $\Phi_q: W_q \to V_q$ for $q \in U_p$, as described in Theorem \ref{thm-opt}, such that $U_p\subset V_q=\Phi_q(W_q)$. This result provides us with a collection of optical functions $\{\omega_q\}_{q\in U_p}$ that, roughly speaking, can be used to recover the causal future for all points $q\in U_p$ in the sense that \eqref{eq:IndicatorCausalFuture} holds for all $q,q'\in U_p$. In what follows, we call the neighborhood $U_p$ a \emph{uniform Temple neighborhood} and we call each $\Phi_q: W_q \to V_q$ a \emph{uniform Temple chart}. 

Although we will not emphasize it in the sequel, given a uniform Temple neighborhood $U_p$ and a collection of uniform Temple charts $\Phi_q:W_q \to V_q$ for $q\in U_p$, one can similarly recover causal pasts $J^-(q,\Phi_q(W_q))$ for all $q\in U_p$, as 
\be
    \label{eq:IndicatorCausalPast}
    \omega_q(q') \le 0 \Leftrightarrow q' \in J^-(q,\Phi_q(W_q)).
\ee

\subsection{Construction of Uniform Temple Charts}
\label{sec:construction}

Our construction of a uniform Temple neighborhood will be based on the following general result.

\begin{lem} 
    \label{lem-GeneralUniformity}
    Let $\Omega \subset N$ be an open set and let $S$ be a manifold such that $\dim(S) = \dim(N)$. Let $\psi: \Omega \times S \to N$ be a continuous map such that the maps $\psi_p: S \to N$ given by
    \be
        \psi_p(s) := \psi(p,s)
    \ee
    are injective. Assume further that there is $s_0 \in S$ such that $\psi_p(s_0) = p$ holds for all $p \in \Omega$. Then for any $p_0 \in \Omega$ there is an open set $U \subset \Omega$ containing $p_0$ such that for all $q \in U$ we have
    \be
        \psi_q(S) \supset U.
    \ee
\end{lem}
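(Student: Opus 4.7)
The plan is to reduce the lemma to a single application of Brouwer's invariance of domain, applied not to the individual maps $\psi_p$ but to a suitable product map that packages how the image $\psi_p(S)$ varies with $p$.

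Specifically, I would introduce the map
\[
    \Psi : \Omega \times S \to \Omega \times N,
    \qquad \Psi(p,s) := (p, \psi(p,s)).
\]
This map is continuous because $\psi$ is, and it is injective: if $\Psi(p_1,s_1) = \Psi(p_2,s_2)$, then equality of first coordinates forces $p_1 = p_2$, after which equality of second coordinates together with injectivity of $\psi_{p_1}$ forces $s_1 = s_2$. Since $\Omega$ is open in $N$ and $\dim S = \dim N$, both $\Omega \times S$ and $\Omega \times N$ are topological manifolds of the same dimension $2 \dim N$. By invariance of domain, $\Psi$ is therefore an open map.

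The image
\[
    \Psi(\Omega \times S) \;=\; \{(p,p') \in \Omega \times N : p' \in \psi_p(S)\}
\]
is thus open in $\Omega \times N$, and it contains the point $(p_0,p_0)$ because $\psi_{p_0}(s_0) = p_0$. I then pick a basic product neighborhood $U_1 \times U_2 \subset \Psi(\Omega \times S)$ of $(p_0,p_0)$, with $U_1 \subset \Omega$ and $U_2 \subset N$ open in $N$, and set $U := U_1 \cap U_2$. This $U$ is an open neighborhood of $p_0$ contained in $\Omega$, and for any $q, q' \in U$ we have $(q,q') \in U_1 \times U_2$, so $q' \in \psi_q(S)$. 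Hence $\psi_q(S) \supset U$ for every $q \in U$, which is exactly the desired conclusion.

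The main conceptual obstacle is spotting that this product trick converts what looks like a uniformity statement — a lower bound on the ``size'' of $\psi_q(S)$ as $q$ varies — into a purely topological openness claim, to which invariance of domain directly applies. A more direct approach, working with a single $\psi_{p_0}$ and then trying to perturb $\psi_q$ while keeping the image large, would essentially require a degree-theoretic or winding-number argument (tracking the behavior of $\psi_q$ on the boundary of a small topological ball around $s_0$ under the continuous deformation $p_0 \rightsquigarrow q$); this would reprove the content of invariance of domain from scratch. The product formulation circumvents this entirely.
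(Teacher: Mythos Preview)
Your proof is correct and follows essentially the same approach as the paper: both introduce the product map $\Psi(p,s) = (p,\psi(p,s))$, observe it is a continuous injection between manifolds of equal dimension, invoke invariance of domain to conclude its image is open, and then extract a product neighborhood of $(p_0,p_0)$ to obtain $U$. The only cosmetic difference is that the paper maps into $N \times N$ and directly selects $U$ with $U \times U \subset \Psi(\Omega \times S)$, whereas you map into $\Omega \times N$ and take $U = U_1 \cap U_2$ from a basic open set $U_1 \times U_2$.
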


\begin{proof}
    Consider the map $\Psi: \Omega \times S \to N\times N$ given by
    \be
        \Psi(p,s) = (p,\psi_p(s)) = (p,\psi(p,s)).
    \ee
    Clearly, $\Psi$ is continuous by its definition. Moreover, it is injective: indeed, $\Psi(p,s) = \Psi(p',s')$ implies
    \be
        (p,\psi_p(s)) = (p',\psi_{p'}(s'))
    \ee
    in which case $p' = p$ and since $\psi_p=\psi_{p'}: S \to N$ is injective, we also get  $s = s'$. Since $\Psi: \Omega \times S\to N \times N$ is a continuous injection and 
    \be
        \dim(\Omega \times S) = \dim(N) + \dim(S) = 2\dim(N) = \dim(N\times N),
    \ee
    Brouwers Invariance of Domain Theorem implies that $\Psi(\Omega \times S) \subset N\times N$ is open. 

    We now fix $p_0 \in \Omega$. Since we have $\psi_p(s_0) = p$ for all $p \in \Omega$ it follows that $(p_0,p_0) = \Psi(p_0,s_0) \in \Psi(\Omega \times S)$. Since the topology of $N\times N$ is generated by products of open sets we can choose an open set $U$ such that $p_0 \in U$ and $U \times U \subset \Psi(\Omega \times S)$. 

    Now given any two points $q,q' \in U$ we have $(q,q') \in U\times U$ and by our construction it follows that $(q,q') \in \Psi(\Omega \times S)$. Consequently, we have 
    \be
        (q,q') = \Psi(p,s) = (p,\psi_p(s)) \text{ for some } (p,s) \in \Omega \times S.
    \ee
    It follows that $p = q$ and $q' = \psi_q(s)$. In other words, for all $q,q' \in U$, there is $s \in S$ such that $q' = \psi_q(s)$ and so for all $q \in U$ we have the inclusion $\psi_q(S) \supset U$. 
\end{proof}

The key step in the proof of the main result of this section, Theorem \ref{th-uniform-Temple}, is to apply the above lemma with $S = W_r:=(-r,r) \times B^n(r)$ and $\Omega=F(W_r)$ for some $r>0$ independent of $q$ to be determined later, so that $\Omega$ is a domain foliated by timelike geodesics as described in Proposition \ref{prop:FoliationByGeodesics}, and with $\psi(q,t,\vec{x})=\Phi_q(t,\vec{x})$ such that $\Phi_q$ are Temple charts as in \eqref{eqTempleClassic} suitably adapted to the foliation of $\Omega$ by timelike geodesics. For this, we will need the following simple result. 

\begin{prop}
    \label{prop:TempleInjectivity}
    Suppose that $V \subset \tilde V_p$ is open and that $\{e_0,e_1,\ldots, e_n\}$ is a frame field on $\tilde V_p$ where $e_0$ is timelike and $e_1,\ldots, e_n$ are spacelike. For a compact set $K \subset V$ we set
    \be
        R_0 := R_\mathrm{N}(K,V,\{e_a\})/\sqrt{2},
    \ee
    where $R_\mathrm{N}(K,V,\{e_a\})$ is the normal radius of $K$ as defined in Lemma \ref{lem:EXPDiffeo}.
    Let $\eta: I \to K \subset V$ be a  smooth future directed timelike curve. Then for all $s,t \in I$ and  $\vec x, \vec y \in B^n(R_0)$ we have 
    \be 
        \exp_{\eta(t)}\left(|\vec x|e_0 + \sum_{i = 1}^n x_i e_i\right)
        = \exp_{\eta(s)}\left(|\vec y|e_0 + \sum_{i = 1}^n y_i e_i\right) \quad \Longleftrightarrow  \quad (s,\vec x) = (t,\vec{y}).
    \ee 
\end{prop}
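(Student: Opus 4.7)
The backward implication is trivial, so I would focus on the forward direction. First, I would make the elementary observation that for any $\vec x \in B^n(R_0)$ the coordinate vector $(|\vec x|, x_1, \ldots, x_n) \in \mathbb{R}^{n+1}$ has Euclidean norm $\sqrt{2}\,|\vec x| < \sqrt{2}\,R_0 = R_\mathrm{N}(K, V, \{e_a\})$, and similarly for $(|\vec y|, y_1, \ldots, y_n)$. Since $\eta(s), \eta(t) \in K$, Lemma \ref{lem:EXPDiffeo} then guarantees that $\EXP_{\eta(s)}$ and $\EXP_{\eta(t)}$ restrict to diffeomorphisms on $B^{n+1}(R_\mathrm{N}(K, V, \{e_a\}))$ with images contained in $V \subset \tilde V_p$. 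In particular, assuming the two sides of the claimed equality agree, their common value
$$r := \exp_{\eta(t)}\Bigl(|\vec x|\,e_0 + \sum_{i=1}^n x_i e_i\Bigr) = \exp_{\eta(s)}\Bigl(|\vec y|\,e_0 + \sum_{i=1}^n y_i e_i\Bigr)$$
lies in $V$, and by the convexity of $\tilde V_p$ the radial curves $\lambda \mapsto \exp_{\eta(t)}(\lambda v_t)$ and $\lambda \mapsto \exp_{\eta(s)}(\lambda v_s)$ for $\lambda \in [0, 1]$, where $v_t := |\vec x|\,e_0 + \sum_i x_i e_i$ and $v_s := |\vec y|\,e_0 + \sum_i y_i e_i$, are the unique geodesics in $\tilde V_p$ joining $\eta(t)$, respectively $\eta(s)$, to $r$. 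A direct computation using the orthonormality $|g(e_a, e_b)| = \delta_{ab}$ shows that $v_t$ and $v_s$ are null (possibly zero), so these connecting geodesics are null (possibly constant).

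Next I would split into two cases. If $s = t$, injectivity of $\EXP_{\eta(t)}$ on $B^{n+1}(R_\mathrm{N}(K,V,\{e_a\}))$ immediately yields $(|\vec x|, x_1, \ldots, x_n) = (|\vec y|, y_1, \ldots, y_n)$ and hence $\vec x = \vec y$. Otherwise, without loss of generality assume $s < t$. Then $\eta|_{[s,t]}$ is a future timelike curve in $\tilde V_p$ from $\eta(s)$ to $\eta(t)$, and concatenating it with the radial null geodesic from $\eta(t)$ to $r$ produces a piecewise smooth future causal curve in $\tilde V_p$ from $\eta(s)$ to $r$ whose first segment is strictly timelike, so it is not a reparameterization of any single null geodesic. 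By the standard local causality result for convex normal neighborhoods---a future causal curve in $\tilde V_p$ joining two points that fails to be a reparameterization of a null geodesic forces the unique connecting geodesic in $\tilde V_p$ to be timelike---the unique geodesic from $\eta(s)$ to $r$ in $\tilde V_p$ must be timelike, contradicting the conclusion of the previous paragraph that it is null.

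The main obstacle I foresee is the correct invocation of the convex-normal-neighborhood causality theorem together with the bookkeeping for degenerate configurations where $\vec x = \vec 0$ or $\vec y = \vec 0$ and one of the radial geodesics collapses to a point. The only genuinely new subcase is $\vec x = \vec 0 = \vec y$ with $s \neq t$, which forces $\eta(s) = \eta(t)$ and is ruled out by the fact that a future timelike curve cannot revisit a point within a convex normal neighborhood. The remaining degenerate cases are dispatched by the same broken-curve argument, using the surviving timelike segment of $\eta$ to obstruct nullness of the unique connecting geodesic.
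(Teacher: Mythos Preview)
Your proposal is correct and follows essentially the same approach as the paper: both arguments first confine the common point to $\tilde V_p$ via the $\sqrt{2}$-radius bookkeeping, then use the causality dichotomy in a convex normal neighborhood (the paper via $J^+(I^+(\eta(s)))\subset I^+(\eta(s))$ and \cite{Minguzzi-2019}, you via the equivalent broken-curve formulation) to rule out $s\neq t$, and finally use uniqueness of geodesics in $\tilde V_p$ to conclude $\vec x=\vec y$. Your explicit treatment of the degenerate cases $\vec x=\vec 0$ or $\vec y=\vec 0$ is a nice addition that the paper leaves implicit.
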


\begin{proof}
    Under the assumptions of the proposition, suppose that $s,t \in I$ and $\vec x, \vec y \in B^n(R_0)$ are such that 
    \be \label{eq:TempleInConvex}
        \exp_{\eta(t)}\left(|\vec{x}|e_0 + \sum_{i = 1}^n x_i e_i\right) 
        = \exp_{\eta(s)}\left(|\vec{y}|e_0+\sum_{i = 1}^n y_i e_i\right).
    \ee
    We denote this common point by $q$ and note that by Lemma \ref{lem:EXPDiffeo} we have $q\in V$ since
    \be
        \big|(|\vec x|, x_1 ,\ldots, x_n )\big| 
        = \sqrt{2}|\vec x| < \sqrt{2}R_0 =R_N(K,V, \{e_a\}).
    \ee
    For the rest of the proof, we restrict to the Lorentzian manifold $(\tilde V_p,g)$.
    
    First, we will show that $t = s$. Suppose, on the contrary, that this is not the case, so we may without loss of generality assume that $t > s$. On the one hand, \eqref{eq:TempleInConvex} implies that $q$ and $\eta(s)$ are connected by a \emph{null} geodesic contained in $(\tilde V_p,g)$.
    
    On the other hand, we see that $q\in J^+(\eta(t))$ while $\eta(t)\in I^+(\eta(s))$. Thus, by the push up principle for the spacetime \( \tilde V_p \), see for example \cite[Theorem 2.24]{Minguzzi-2019}, we conclude that $q \in I^+(\eta(t))$ within $\tilde V_p$. Then, by \cite[Corollary $2.10$]{Minguzzi-2019}, there is a \emph{timelike} geodesic from $\eta(s)$ to $q$ in $\tilde V_p$. Since $\tilde V_p$ is a normal convex neighborhood we get a contradiction:  $\eta(s)\in\tilde V_p$ and $q\in\tilde V_p$ cannot be joined by both a null geodesic and a timelike geodesic contained in $\tilde V_p$. This proves  our claim that $s = t$. 

    Now that $s = t$ we see that we have two null geodesics joining $\eta(s) = \eta(t)$ and $q$. Again, by the properties of $\tilde V_p$, they must be the same geodesic, which yields $\vec x = \vec y$.
\end{proof}

We can now prove our theorem establishing the existence of uniform Temple charts. For a shorter summary of this result, see Theorem \ref{thm:unif-Temple-intro} in the introduction. 

\begin{thm} 
    \label{th-uniform-Temple}
    Let $p\in N$, and let $\tilde V_p$ be its convex normal neighborhood as described in Section \ref{subsec:ConvexNormal}. Then there exists a compact set $K_p$ such that 
    \be
        p \in K_p \subset \tilde V_p
    \ee
    and a frame field $\{e_a\}=\{e_0,\ldots ,e_n\}$ on $K_p$ where $e_0$ is timelike and $e_1,\ldots,e_n$ are spacelike. Furthermore, there exists $R=R(p) > 0$, such that for all $r \in (0,R]$ there is a neighborhood $U_p = U_{p,r}$ such that $p \in U_p \subset K_p$, with the following properties:
    \begin{enumerate}
        \item For every $q \in U_p$, the geodesic $\eta_q$ satisfying the initial conditions $\eta_q(0) = q$, $\dot \eta_q(0) = e_0$ is defined on the interval $(-r,r)$ and satisfies 
        \be
            \eta_q(t) \in K_p \quad \text{and} \quad \dot\eta_q(t) = e_0 \quad \text{for all $t \in (-r,r)$.}
        \ee 
        \item For all $q \in U_p$, the Temple chart $\Phi_q = \Phi_{\eta_q}: W_r := (-r,r) \times B^n(r) \to \Phi_q(W_r)$ defined by
        \be
            \label{eq:TempleAgain}
            \Phi_q(t,\vec x) = \exp_{\eta_q(t)}\left(|\vec x|e_0 + \sum_{i = 1}^n x^i e_i\right),
        \ee
        is a homeomorphism and it is also a diffeomorphism away from the central axis $\{(t,\vec x) \in \mathbb R^{n+1} \,:\, |\vec x| = 0\,\}\cap W_r$.
        \item For all $q \in U_p$ we have
        \be
            U_p \subset \Phi_q(W_r)\subset K_p.
        \ee
        \item The statements (1)-(3) remain valid if we replace $U_p$ by any open set $\widetilde U_p$ such that $p\in \widetilde U_p \subset U_p$.
    \end{enumerate}
\end{thm}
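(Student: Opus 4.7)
The plan is to assemble the uniform neighborhood $U_p$ from the foliation of Proposition~\ref{prop:FoliationByGeodesics}, the three technical estimates of Section~\ref{subsec:LocalEstimates}, and a final application of Lemma~\ref{lem-GeneralUniformity} to produce the required covering by Temple charts. First I would apply Proposition~\ref{prop:FoliationByGeodesics} to obtain a neighborhood $V$ of $p$, a frame field $\{e_a\}$ on $V$ with $e_0$ timelike and parallel along the foliating geodesics $t \mapsto F(t, \vec x)$, and a radius $R_p > 0$ for which $F: W_{R_p} \to V$ is a diffeomorphism. I would then choose a compact set $K_p := F(\overline{W_{R_p/2}})$ and an intermediate open set $V'$ with $K_p \subset V' \subset \overline{V'} \subset V$ to leave room for the later uniform estimates. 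By construction, each $q \in K_p$ sits on the unique foliating geodesic passing through it, which provides the geodesic $\eta_q$ of statement (1), parameterized so that $\eta_q(0) = q$ and $\dot\eta_q = e_0$; requiring $r < R_p/2$ and shrinking the eventual $U_p$ in the time direction ensures $\eta_q([-r,r]) \subset K_p$.

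Second, I would calibrate the radius $R = R(p)$. Lemma~\ref{lem:EXPDiffeo} supplies a uniform normal radius $R_\mathrm{N}(\overline{V'}, V, \{e_a\}) > 0$, Lemma~\ref{lem:JacobiTime} yields a threshold $\epsilon_0 > 0$ below which initially timelike Jacobi fields stay timelike, and Lemma~\ref{lem:GoodGeodesics} yields a $\delta > 0$ such that geodesics starting in $\overline{V'}$ with frame components bounded by $\delta$ remain inside $V$ on $[0,1]$ while propagating those bounds. I would choose $R > 0$ small enough that, for any $(t,\vec x) \in W_R$ and any $q \in K_p$, the vector $\vec v := |\vec x|e_0 + \sum_i x^i e_i \in T_{\eta_q(t)} N$ has all frame components bounded by $\min(\delta, \epsilon_0)$ and satisfies $|\vec v|_{g_R} < R_\mathrm{N}(\overline{V'}, V, \{e_a\})/\sqrt{2}$; a choice proportional to these quantities divided by $\sqrt{n+1}$ suffices. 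With this $R$, for each $r \in (0, R]$ the Temple map $\Phi_q$ of \eqref{eq:TempleAgain} is well-defined on $W_r$ with image in $V'$. Injectivity on $W_r$ is exactly Proposition~\ref{prop:TempleInjectivity}, applied along $\eta_q$. Off the central axis, Lemma~\ref{lem:JacobiTime} guarantees that the variation Jacobi field $J$ along each null generator (with $J(0) = e_0$, $D_\lambda J(0) = 0$) stays timelike, which forces $d\Phi_q$ to be of maximal rank there, so $\Phi_q$ is a smooth diffeomorphism away from the axis. Continuity of $\Phi_q$ and of $\Phi_q^{-1}$ on the axis then follows from continuity of $\exp$ and Invariance of Domain, giving the homeomorphism property of statement (2).

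Third, I would invoke Lemma~\ref{lem-GeneralUniformity} with $\Omega$ a sufficiently small foliated open set around $p$ in $K_p$ on which the constructions of the previous step apply, with $S = W_r$, $s_0 = (0, \vec 0)$, and $\psi(q, t, \vec x) = \Phi_q(t, \vec x)$. The hypotheses are satisfied: $\psi$ is continuous, each $\psi_q = \Phi_q$ is injective, and $\psi_q(s_0) = q$. The lemma then produces an open $U_p \ni p$ with $U_p \subset \Omega$ such that $\Phi_q(W_r) \supset U_p$ for every $q \in U_p$; the inclusion $\Phi_q(W_r) \subset K_p$ holds by our choice of $R$, completing statement (3). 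Statement (4) is immediate because every property above is preserved under passing to smaller open subsets $\widetilde U_p \subset U_p$. The main obstacle will be the careful bookkeeping required in the second step: a single radius $R$ must simultaneously enforce that $\eta_q$ stays in $K_p$ on $(-r, r)$, that each null generator emanating from $\eta_q(t)$ stays inside a common normal neighborhood of its base point, and that the Jacobi threshold of Lemma~\ref{lem:JacobiTime} applies uniformly in $q \in K_p$. A secondary subtlety is verifying that timelikeness of $J$ really implies injectivity of $d\Phi_q$ off the axis, which uses the computation $\partial_t \Phi_q = J$ combined with the fact that $J$ being timelike is transverse to the spatial null directions $\partial_{x^i} \Phi_q$.
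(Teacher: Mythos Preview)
Your proposal is correct and follows essentially the same route as the paper: the foliation of Proposition~\ref{prop:FoliationByGeodesics}, a radius $R$ calibrated from Lemmas~\ref{lem:EXPDiffeo}, \ref{lem:GoodGeodesics}, \ref{lem:JacobiTime}, injectivity from Proposition~\ref{prop:TempleInjectivity}, the homeomorphism via Invariance of Domain, timelikeness of $J$ to get the off-axis diffeomorphism, and finally Lemma~\ref{lem-GeneralUniformity} to extract $U_p$. The only differences are cosmetic bookkeeping (the paper takes $K_p = F(\overline{W_{3R_p/4}})$ and nests it explicitly between $F(\overline{W_{R_p/2}})$ and $V$ rather than introducing an auxiliary $V'$, and sets $R = \min(\delta_0, R_p/4, R_N/\sqrt{2})$); your constraint $|\vec v|_{g_R} < R_N/\sqrt{2}$ is slightly tighter than necessary since $|\vec v|_{g_R} = \sqrt{2}\,|\vec x|$ already, but this is harmless.
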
 

\begin{proof}
    Given $p \in N$ we let $R_p>0$, the frame field $\{e_a\}$ and the diffeomorphism $F: W_{R_p} \to V$ be as in the conclusion of Proposition \ref{prop:FoliationByGeodesics}. Next, we define the compact set
    \be
        K_p := F(\overline {W_{3R_p/4}}),
    \ee  
    in which case $\{e_a\}$ is defined on all of $K_p$. We then proceed to choose the radius $r > 0$ for the cylinders $W_r:=(-r,r)\times B^n(r)$ on which our uniform Temple charts will be defined. For this, we choose
    \begin{align}
        \epsilon_0 & := \epsilon(F(\overline {W_{3R_p/4}}),\{e_a\}) &&  \text{as in Lemma \ref{lem:JacobiTime}}, \\
        \delta_0 & := \delta(F(\overline{W_{R_p/2}}),F(W_{3R_p/4}),\{e_a\},\epsilon_0) && \text{as in Lemma \ref{lem:GoodGeodesics}}, \\  R_N &:= R_N(F(\overline {W_{R_p/2}}),F(W_{3R_p/4}),\{e_a\}) && \text{as in Lemma \ref{lem:EXPDiffeo}},
    \end{align}
    and we set
    \be 
        \label{eq:TempleRadiusDef}
        R = R(p) := \min\left(\delta_0,\frac{R_p}{4},\frac{R_N}{\sqrt{2}}\right).
    \ee
    Our goal now is to show  that the statements (1)-(3) hold for any $r \in (0,R]$.
    
    We fix an $r \in (0,R]$ for $R>0$ given by \eqref{eq:TempleRadiusDef}. Since $0 < r \leq \frac{R_p}{4}$, it is straightforward to check that
    every geodesic $\eta_q$ such that $\eta_q(0) = q \in F(W_r)$ and $\dot \eta_q(0) = e_0$ is defined on the interval $(-r,r)$ and satisfies
    \be
        \eta_q(t) = F(t_q + t,\vec x_q), \, \text{ where $(t_q,\vec x_q) \in W_r$ is such that $F(t_q,\vec x_q) = q$}.
    \ee
    In particular, we have $\eta_q: (-r,r)\to F(W_{R_p/2}) \subset K_p$. This shows that the first claim of the theorem holds
    for any open set $U_p$ such that $U_p \subset F(W_r)$. 
    
    Next, we will show that the Temple charts $\Phi_q=\Phi_{q,\eta_q}: W_r \to \Phi_q(W_r)$ have the properties described in the second claim of the proposition, as long as $q\in U_p$ where $U_p$ is any open subset of $F(W_r)$ .   
    For this, we define the cone 
    \be
        \Lambda := B^{n+1}\left(\sqrt{2}r\right) \cap \{(|\vec x|, \vec x): \, \vec x \in \mathbb R^n\}\subset \mathbb R^{n+1},
    \ee
    and the projection map 
    \be
        \pi: \Lambda \to B^n(r) \subset \mathbb R^n , \quad  (s_0,s_1,...,s_n) \mapsto  (s_1,...,s_n).
    \ee
    This allows us to view a Temple chart $\Phi_q$ given by \eqref{eq:TempleAgain} as the composition
    \be 
        \label{temple=Ft-proj-inv}
        \Phi_q(t, \vec x) = (\EXP_{\eta_q(t)} \circ \pi^{-1})(\vec x).
    \ee
    We note that $\pi: \Lambda \to B^n(r)$ is a homeomorphism with the inverse $\pi^{-1}(\vec x) = (|\vec x|,\vec x)$, which is also a diffeomorphism away from the vertex of the cone, $(0,\vec 0) \in \mathbb R^{n+1}$. Moreover, since $\eta_q(t) \in F(\overline{W_{R_p/2}})$ for all $q\in F(W_r)$  and $t \in (-r,r)$, and since $r$ was chosen so that
    \be
        0 < \sqrt{2} r \leq R_N(F(\overline{W_{R_p/2}}),F(W_{3R_p/4}),\{e_a\}),
    \ee
    by Lemma \ref{lem:EXPDiffeo} we know that for every $t \in (-r,r)$ and $q\in F(W_r)$, the map 
    \be
        \EXP_{\eta_q(t)}: B^{n+1}(\sqrt{2}r) \to F(W_{3R_p/4})
    \ee
    is a diffeomorphism onto its image. All in all, it follows from  \eqref{temple=Ft-proj-inv}  that $\Phi_q:  W_r \to F(W_{3R_p/4}) \subset K_p$ is well defined and is continuous. Moreover, Proposition \ref{prop:TempleInjectivity} together with $0<r\leq \frac{R_N}{\sqrt{2}} $ implies that for all $q \in F(W_r)$ the map $\Phi_q: W_r \to \Phi_q(W_r)$ is injective and hence invertible. Thus $\Phi_q: W_r \to N$ is a continuous injection between two manifolds of the same dimension, hence, by Brouwers theorem of invariance of domain, it is a homeomorphism with an open image. Next, we will show that each $\Phi_q: W_r \to \Phi_q(W_r)$ is a local diffeomorphism away from the central axis  $\{(t,\vec x) \in \mathbb R^{n+1} \,:\, |\vec x| = 0\,\}\cap W_r$.  Since 
    \be
        \pi^{-1}: B^ n(r) \setminus \{\vec 0 \} \to \Lambda \setminus \{(0,\vec 0) \}
    \ee
    is a diffeomorphism, the vectors $\pi^{-1}_*\partial_{x_i}$ are linearly independent and tangent to $\Lambda$ away from the vertex. Since
    \be
        \EXP_{\eta_q(t)}: B^{n+1}(\sqrt{2}r) \to F(W_{3R_p/4})
    \ee
    is a diffeomorphism for any fixed $t \in (-r,r)$ and $q \in F(W_{r})$, it follows that the vectors
    \be
        (\Phi_q)_*\partial_{x_i} = (\EXP_{\eta_q(t)} \circ \pi^{-1})_*\partial_{x_i}, \quad i = 1,\ldots, n,
    \ee
    are linearly independent and tangent to the null cone emanating from $\eta_q(t)$, away from its vertex. Due to the inequality 
    \be
        r \leq \delta_0 = \delta(F(\overline{W_{R_p/2}}),F(W_{3R_p/4}),\{e_a\},\epsilon_0),
    \ee
    Lemma \ref{lem:GoodGeodesics} implies that every geodesic of the form $\gamma(\lambda) := \Phi_q(t,\lambda \vec x)$ with $(t,\vec x) \in W_r \setminus \{\vec x = 0\}$ satisfies 
    \be
        \max_{0 \leq a \leq n} \sup_{\lambda \in [0,1]}|g(\dot \gamma(\lambda),e_a)| < \epsilon_0.
    \ee
    In turn, our choice of $\epsilon_0 = \epsilon(F(\overline {W_{3R_p/4}}),\{e_a\})$ combined with Lemma \ref{lem:JacobiTime} guarantees that the Jacobi fields $(\Phi_q)_*\partial_t$ of geodesics  $\gamma(\lambda) = \Phi_q(t,\lambda\vec x) \in F(W_{3R_p/4})$, $\lambda \in [0,1]$, are timelike for  $(t,\vec x) \in W_r \setminus \{\vec x = 0\}$. Thus $(\Phi_q)_*\partial_t$ is linearly independent of the tangent vectors to a null cone and so
    \be
        \{(\Phi_q)_*\partial_t, (\Phi_q)_*\partial_{x_1},\dots,  (\Phi_q)_*\partial_{x_n}\}
    \ee
    are linearly independent at any $(t,\vec x)\in W_r \setminus \{\vec x = 0\}$. Consequently, the Temple chart
    \be 
        \label{eq:GlobalDiffeoTemple}
        \Phi_q: W_r \setminus \{\vec x = 0\} \to F(W_{3R_p/4})
    \ee
    is a local diffeomorphism. Since $\Phi_q: W_r \to \Phi_q(W_r)$ is a homeomorphism it follows that the map in \eqref{eq:GlobalDiffeoTemple} is a diffeomorphism onto its image for every $q \in F(W_r)$. This concludes the proof of the second claim of the theorem  for any $r\in (0,R]$ where $R>0$ as in \eqref{eq:TempleRadiusDef} and any open set $U_p$ such that $U_p \subset F(W_r)$.
    
    Finally, we will apply Lemma \ref{lem-GeneralUniformity} in order to find $U_p \subset F(W_r)$ such that $U_p \subset \Phi_q(W_r) \subset K_p$, completing the proof of the theorem. We let $\Omega = F(W_r)$, $S = W_r$ and define $\psi: \Omega \times S = F(W_r) \times W_r \to N$ by
    \be
       \psi: \Omega \times S = F(W_r) \times W_r \to N, \quad (q,t,\vec x) \mapsto \Phi_q(t,\vec x).
    \ee
    The continuity of $\psi$ follows from the fact that
    \be
        \psi(q,t,\vec x) =\Phi_{q}(t,\vec x) = \pi_2\left(\EXP^{\{e_a\}}\left(\pi_2(\EXP^{\{e_a\}}(q,t,\vec 0)),|\vec x|, \vec x\right)\right)
    \ee
    where both \be\pi_2: N\times N\to N, \quad (p,q) \mapsto q\ee and $\EXP^{\{e_a\}}$ are continuous. Moreover, we have $\psi(q,0,\vec 0) = q$ and, as explained above, our choice of $r>0$ implies that for all $q \in F(W_r)$ the maps $\psi_q = \Phi_q : W_r \to N$ are injective. Consequently, we may apply Lemma \ref{lem-GeneralUniformity} with $\Omega$, $S$ and $\psi$ as defined above to conclude that there is an open set $U_{p} \subset \Omega = F(W_r) \subset K_p$ such that $p \in U_{p}$ and for all $q \in U_{p}$ we have 
    \be
        U_{p} \subset \Phi_q(W_r) \subset K_p.
    \ee 
    The claim (4) of the theorem follows directly from our construction.
\end{proof}

\subsection{Uniform Gradient Estimates for Optical Functions
} 
\label{sec:estimate} 

Given a point $p \in N$, we  define the compact set $K_p$ and the frame field $\{e_a\}$ on (a neighborhood of) $K_p$ as in Theorem \ref{th-uniform-Temple} (see also Proposition \ref{prop:FoliationByGeodesics}). Note that in this case we have the Riemannian metric $g_R$ defined on a neighborhood of $K_p$ by \eqref{eq:Riemannization}, so we may equip $K_p$ with the corresponding distance function $d_{g_R}$ as described Section \ref{sec:notations}. We also define the radius $r = r(p)$ and a uniform Temple neighborhood $U_p=U_{p,r}$ of $p$ as in Theorem \ref{th-uniform-Temple}. Then for every $q\in U_p$ there is a uniform Temple chart $\Phi_q$ defined on the cylinder $W_r=(-r,r)\times B_r$ by 
\be
    \label{eq:Temple-for-gradient}
    \Phi_q(t,\vec x) = \exp_{\eta_q(t)}\left(|\vec x|e_0 + \sum_{i = 1}^n x^i e_i\right),
\ee
and covering $U_p$. More specifically, we have $U_p\subset V_q=\Phi_q(W_r) \subset K_p$. For this chart, we define the optical function $\omega_q:V_q\to \mathbb{R}$ respectively the radial function $\lambda_q:V_q\to \mathbb{R}$ by 
\be 
    \label{eq:optical-and-radial}
    \omega_q(\Phi_q(t,\vec{x})) = t \quad \text{ respectively } \quad \lambda_q(\Phi_q(t, \vec x) ) = |\vec x|,
\ee
see Theorem \ref{thm-opt}. We recall that $\omega_q$ and $\lambda_q$ are smooth in $V_q\setminus \eta_q$, where $\eta_q(t) = \Phi_q(t,\vec 0)$, $t\in (-r,r)$, is the central geodesic of the chart.

The goal of this section is to prove the following result: 
\begin{prop} 
    \label{prop-g_R}
    There exists a constant $C > 0$ that may only depend on the Lorentzian metric $g$ and the frame field $\{e_a\}$ on the compact set $K_p$, on the dimension of $(N,g)$, and on a positive constant $R$ such that $0<r\leq R$, such that for all $q \in U_p$ the optical function $\omega_q$ of the uniform Temple chart $\Phi_q: W_r\to V_q$ satisfies 
    \be 
        \label{eqEikonalLip}
        \left ||\nabla^{g_R} \omega_q|_{g_R} - \sqrt{2}\right| < C \lambda_q \quad \text{ in } \quad  V_q\setminus \eta_q.
    \ee
\end{prop}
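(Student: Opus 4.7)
The plan is to first reduce the estimate to a scalar bound on the frame component $e_0(\omega_q)$. By construction, each level set $\omega_q^{-1}(t)\cap V_q$ is the future null cone from $\eta_q(t)$, so $\omega_q$ is constant along a family of null hypersurfaces on $V_q\setminus \eta_q$, forcing the eikonal equation $g(\nabla^g\omega_q,\nabla^g\omega_q)=0$. Writing $d\omega_q = a_0\,\omega^0 + \sum_{i=1}^n a_i\,\omega^i$ in the coframe dual to $\{e_a\}$, with $a_a = e_a(\omega_q)$, the Lorentzian signature of $g$ in this frame gives $-a_0^2+\sum_i a_i^2=0$, while $g_R$-orthonormality gives
\[
|\nabla^{g_R}\omega_q|_{g_R}^2 = \sum_{a=0}^n a_a^2 = 2a_0^2 = 2(e_0(\omega_q))^2.
\]
So the proposition reduces to the claim that $|e_0(\omega_q)-1|\leq C\lambda_q$ uniformly on $V_q\setminus \eta_q$ for all $q\in U_p$.

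\textbf{Computing $\nabla^g\omega_q$ via the null generator.} Fix $P=\Phi_q(t,\vec x)$ with $\vec x\neq \vec 0$, and let $\sigma:[0,1]\to N$, $\sigma(s)=\Phi_q(t,s\vec x)$, be the null geodesic from $\eta_q(t)$ to $P$ with initial velocity $v_0=|\vec x|e_0+\sum_i x^i e_i$. Since $\nabla^g\omega_q|_P$ is null and tangent to the null hypersurface $\omega_q^{-1}(t)$ at $P$, and the null direction in that tangent hyperplane is one-dimensional, it must be proportional to $\dot\sigma(1)$. To fix the constant, I use the Jacobi field $J(s)=\Phi_{q,*}(\partial_t)|_{(t,s\vec x)}$, which satisfies $J(0)=e_0$, $D_sJ(0)=0$ (because the frame $\{e_a\}$ is parallel along $\eta_q$), and $d\omega_q(J)\equiv 1$. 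A direct application of the Jacobi equation shows that $s\mapsto g(J(s),\dot\sigma(s))$ is affine with zero derivative at $s=0$ and initial value $g(e_0,v_0)=-\lambda_q$, hence $g(J(1),\dot\sigma(1))=-\lambda_q$. Equating $1=d\omega_q(J(1))=g(\nabla^g\omega_q,J(1))$ fixes the proportionality constant and gives
\[
\nabla^g\omega_q|_P = -\frac{\dot\sigma(1)}{\lambda_q}, \qquad e_0(\omega_q)|_P = -\frac{g(\dot\sigma(1),e_0|_P)}{\lambda_q}.
\]

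\textbf{Parallel transport comparison, and the main obstacle.} It remains to show that $g(\dot\sigma(1),e_0|_P)=-\lambda_q+O(\lambda_q^2)$ uniformly. Let $\hat e_0(s)$ denote the $g$-parallel transport of $e_0|_{\eta_q(t)}$ along $\sigma$; since $\dot\sigma$ is also $g$-parallel, $g(\dot\sigma,\hat e_0)\equiv g(v_0,e_0)=-\lambda_q$, so the full error reduces to $g(\dot\sigma(1),e_0|_P-\hat e_0(1))$. An ODE-stability argument analogous to Lemma \ref{lem:GoodGeodesics}, using that all frame components of $v_0$ are of size $\lambda_q$, gives $|\dot\sigma(s)|_{g_R}\leq C\lambda_q$ on $[0,1]$. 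For the second factor, $X(s):=e_0|_{\sigma(s)}-\hat e_0(s)$ satisfies $X(0)=0$ and $D_sX=\nabla_{\dot\sigma}e_0$, whose $g_R$-norm is bounded by the Christoffel estimates \eqref{eq:AppendixBoundsNeededGeometric} on $K_p$ times $|\dot\sigma|_{g_R}\leq C\lambda_q$; a Gr\"onwall estimate then yields $|X(1)|_{g_R}\leq C\lambda_q$. A Cauchy--Schwarz bound with respect to $g_R$ (using $|g|_{g_R}=\sqrt{n}$) produces the required $|g(\dot\sigma(1),e_0|_P-\hat e_0(1))|\leq C\lambda_q^2$, so $e_0(\omega_q)=1+O(\lambda_q)$ and the proposition follows. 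The chief obstacle is bookkeeping: making sure that every constant $C$ above depends only on $g$, the frame $\{e_a\}$ restricted to $K_p$, the dimension, and the upper bound $R$ on $r$, rather than on the particular $q\in U_p$ or $(t,\vec x)\in W_r$. Compactness of $K_p$ together with the uniformity built into Lemmas \ref{lem:GoodGeodesics} and \ref{lem:JacobiTime} makes this tractable, while the inevitable degeneration of the argument as $\vec x\to \vec 0$ is precisely reflected in the linear-in-$\lambda_q$ error.
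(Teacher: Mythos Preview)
Your proof is correct and follows a genuinely different route from the paper's. Both arguments share the first reduction: the eikonal equation $g(\nabla^g\omega_q,\nabla^g\omega_q)=0$ combined with $g_R$-orthonormality of $\{e_a\}$ gives $|\nabla^{g_R}\omega_q|_{g_R}^2=2(e_0(\omega_q))^2$. The divergence is in how $e_0(\omega_q)$ is estimated. The paper invokes Lemma~\ref{lem:components-2} (itself resting on the Jacobi-field estimate of Lemma~\ref{lem:JacobiTime}) to write $\partial_t=(1+O(\lambda_q))e_0+\sum_iO(\lambda_q)e_i$, then inverts this to express $e_0(\omega_q)$ in terms of $\partial_t(\omega_q)=1$ and the $e_i(\omega_q)$, bounds the latter by $|\nabla^{g_R}\omega_q|_{g_R}$, and closes an implicit inequality $|\nabla^{g_R}\omega_q|_{g_R}=\sqrt{2}+O(\lambda_q)+O(\lambda_q)|\nabla^{g_R}\omega_q|_{g_R}$. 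You instead identify the Lorentzian gradient explicitly as $\nabla^g\omega_q=-\dot\sigma(1)/\lambda_q=-\partial_\lambda$ (using that the null direction tangent to the cone is one-dimensional, with the proportionality constant fixed by the identity $g(J,\dot\sigma)\equiv-\lambda_q$, which is exactly Lemma~\ref{lem:components-1}), and then read off $e_0(\omega_q)=-g(\partial_\lambda,e_0)=1+O(\lambda_q)$ directly from a parallel-transport estimate equivalent to Lemma~\ref{lem:parallelVF}. Your route is more economical here: it bypasses the heavier Jacobi-field machinery of Lemma~\ref{lem:JacobiTime} and the implicit loop-closing, needing only the simpler ODE estimate for $\dot\sigma$ in the frame. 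The paper's route, on the other hand, reuses Lemma~\ref{lem:JacobiTime}, which was already required for the existence of the uniform Temple charts in Theorem~\ref{th-uniform-Temple}, so there is no extra cost in that context.
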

We note that Sakovich and Sormani showed in \cite[Section III.B]{Sak-Sor-null} that for every $q$ in some neighborhood $U(p)$ of $p$ there exists a constant $C=C(q)$ such that \eqref{eqEikonalLip} holds with respect to the Riemannian metric $g^q_R$ given by
\be
    g^q_R(X,Y):=2g(X,J^q)g(Y,J^q)+g(X,Y),
\ee
where the vector field $J^q=(\Phi_q)_*\partial_t$ depends on the chart $\Phi_q$, and hence on the choice of $q$. In contrast, the estimate of Proposition \ref{prop-g_R} holds with both $C$ and $g_R$ that are independent of $q\in U_p$. 

The proof of Proposition \ref{prop-g_R} requires a few preliminary results, some of which can be traced back to Temple's original work  \cite{Temple-1938}. However, here we restate these results in modern notation and provide complete proofs. We also make sure that all estimates are independent of a particular choice of  $q\in U_p$. 

We start by explaining the notation that we will use. For a fixed $q\in U_p$ we will assume given a uniform Temple chart $\Phi_q:(t,\vec x) \mapsto \Phi_q(t,\vec x)$ defined on $W_r=(-r,r)\times B_r$ by \eqref{eq:Temple-for-gradient}, with the image denoted by $V_q=\Phi_q(W_r)$. For this chart, we let  $\partial_t$ denote the vector field on $V_q$ defined at every point $\Phi_q(t,x_1,\ldots, x_n)$ by $\frac{\partial}{\partial s} _{|_{s=t}} \Phi_q(s,x_1,\ldots, x_n)$. We note that the vector field $\partial_t$ is defined with respect to the given Temple chart $\Phi_q:W_r\to V_q$ and, as such, it depends on $q \in U_p$. However, we have chosen not to emphasize this dependence in order to avoid excessive notation. In a similar vein, we will sometimes suppress $q$ and denote the radial function $\lambda_q$  as in  \eqref{eq:optical-and-radial} by $\lambda$. 
 
Next, using the radial function $\lambda$ we define the functions 
\be
    u^i = \frac{x^i}{\lambda}, \quad i = 1,\ldots, n,
\ee
so that $\vec u = (u^1,\ldots, u^n)$ is a unit vector in $\mathbb R^n$. We let $\partial_\lambda$ be the vector field on $V_q\setminus \eta_q$ defined at $\Phi_q(t,\vec{x})=\Phi_q(t,\lambda \vec{u})$ with $|\vec x|\neq 0$, by $\dot\gamma_{(t,\vec{u})}(\lambda)$, where 
\be 
    \label{eq:null-geods}
    \gamma_{(t,\vec u)}(\lambda) = \Phi_q(t,\lambda u_1,\ldots, \lambda u_n) =\exp_{\eta_q(t)}\left(\lambda \left(e_0 + \sum_{i=1}^n u_i e_i \right) \right), \quad \lambda\in [0,r),
\ee
is a null geodesic. Again, we will suppress the dependence of the vector field $\partial_\lambda$ on $q$ in our notation, whenever we work within a fixed uniform Temple chart $\Phi_q:W_r\to V_q$.

Finally, we note that the vector field $\partial_t$ defined as above is the Jacobi field $J_{(t,\vec u)}$ of the geodesic variation $\{\gamma_{(t,\vec u)}\}_t$, where the unit vector $\vec u \in \mathbb R^n$ is fixed and $t$ varies. Whenever it causes no confusion, we will abbreviate the notation and write $J = J(\lambda)$ respectively $\gamma = \gamma(\lambda)$ instead of $J_{(t,\vec u)} = J_{(t,\vec u)}(\lambda)$ respectively $\gamma_{(t,\vec u)} = \gamma_{(t,\vec u)}(\lambda)$.

For the rest of this section, it will be assumed that a constant $C>0$ may only depend on the Lorentzian metric $g$ and the frame field $\{e_a\}$ restricted to the compact set $K_p$, on the dimension of $(N,g)$, and on  $R$ such that $0<r\leq R$. We will also use the notation $f=O(\lambda^\beta)$ for $\beta\in \mathbb{R}$ to indicate that $|f|\leq C\lambda^\beta$ for a constant $C>0$ as described above.

The proof of Proposition \ref{prop-g_R} will be based on the following three lemmas describing the behavior of the vector fields $\partial_t$ and $\partial_\lambda$ within their uniform Temple chart.  

\begin{lem} 
    \label{lem:components-1}
    Let $U_p$ be a uniform Temple neighborhood of $p\in N$ as in Theorem \ref{th-uniform-Temple}. Then, for any $q\in U_p$, the vector fields $\partial_t$ and $\partial_\lambda$ of the  uniform Temple chart $\Phi_q:W_r\to V_q$ covering $U_p$ satisfy
    \be 
        \label{eq:g-components-1}
        g(\partial_\lambda, \partial_\lambda) = 0  \quad \text{ and } \quad g(\partial_t, \partial_\lambda) = -1 \quad \text{ on } V_q \setminus \eta_q.
    \ee
\end{lem}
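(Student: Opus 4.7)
The plan is to derive both identities directly from the definitions of $\partial_\lambda$ and $\partial_t$ in terms of the geodesic variation \eqref{eq:null-geods}, together with the parallel-frame property along $\eta_q$ guaranteed by Proposition \ref{prop:FoliationByGeodesics}(2). Throughout, I fix $(t,\vec u)$ with $|\vec u|=1$ and write $\gamma := \gamma_{(t,\vec u)}$.

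For $g(\partial_\lambda,\partial_\lambda) = 0$, I simply observe that $\gamma$ is a null geodesic: its initial velocity $v := e_0|_{\eta_q(t)} + \sum_{i=1}^n u_i\, e_i|_{\eta_q(t)}$ satisfies $g(v,v) = -1 + |\vec u|^2 = 0$, since $\{e_a\}$ is orthonormal with $e_0$ timelike and $e_1,\ldots,e_n$ spacelike. Because $\gamma$ is a $g$-geodesic, $g(\dot\gamma,\dot\gamma)$ is parallel along it and hence identically zero, which gives $g(\partial_\lambda,\partial_\lambda) \equiv 0$ throughout $V_q\setminus \eta_q$.

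For $g(\partial_t,\partial_\lambda) = -1$, the plan is to apply the standard Gauss-lemma strategy to the function $\psi(\lambda) := g(J(\lambda),\dot\gamma(\lambda))$, where $J := \partial_t|_\gamma$ is the Jacobi field of the geodesic variation $\{\gamma_{(s,\vec u)}\}_s$. Since $\partial_t$ and $\partial_\lambda$ are coordinate vector fields on $V_q\setminus \eta_q$ in the coordinates $(t,\lambda,\vec u)$, which are genuine smooth coordinates by part (2) of Theorem \ref{th-uniform-Temple}, they commute, so $D_\lambda J = D_{\partial_t}\dot\gamma$. By Proposition \ref{prop:FoliationByGeodesics}(2), every $e_a$ is parallel along $\eta_q$, hence at $\lambda = 0$
\be
    J(0) = \dot\eta_q(t) = e_0|_{\eta_q(t)}, \qquad D_\lambda J(0) = D_{\partial_t}\Big(e_0 + \sum_{i=1}^n u_i\, e_i\Big)\Big|_{\eta_q(t)} = 0,
\ee
where the second identity uses that the $u_i$ are constant along the variation. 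Combined with $\dot\gamma(0) = e_0 + \sum_{i=1}^n u_i e_i$ at $\eta_q(t)$, this yields $\psi(0) = g(e_0, e_0) = -1$ and, because $\gamma$ is a geodesic so $D_\lambda \dot\gamma = 0$, also $\psi'(0) = g(D_\lambda J(0), \dot\gamma(0)) = 0$.

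To finish, I differentiate once more and invoke the Jacobi equation $D_\lambda^2 J = -R^g(J,\dot\gamma)\dot\gamma$ together with the curvature symmetry $g(R^g(X,Y)Z,Y) = 0$ in the last two slots to obtain
\be
    \psi''(\lambda) = g(D_\lambda^2 J,\dot\gamma) = -g(R^g(J,\dot\gamma)\dot\gamma,\dot\gamma) = 0.
\ee
Hence $\psi$ is affine with $\psi(0) = -1$ and $\psi'(0) = 0$, so $\psi \equiv -1$, proving $g(\partial_t,\partial_\lambda) = -1$ on $V_q\setminus \eta_q$. I do not anticipate any serious obstacle here; the only subtlety is the justification that $(t,\lambda,\vec u)$ furnish smooth coordinates off the central axis, which is immediate from the diffeomorphism assertion in Theorem \ref{th-uniform-Temple}(2), and the verification that the $e_a$ are parallel along $\eta_q$, which is exactly Proposition \ref{prop:FoliationByGeodesics}(2).
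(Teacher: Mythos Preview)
Your proof is correct and follows essentially the same approach as the paper: both arguments identify $\partial_t$ as the Jacobi field $J$ along the null geodesic $\gamma$ with $J(0)=e_0$ and $D_\lambda J(0)=0$, and then use that $g(J,\dot\gamma)$ is constant. The paper simply cites the standard Jacobi-field fact (Do Carmo, Proposition 3.6) for this constancy, whereas you unpack it by computing $\psi''=0$ via the Jacobi equation and the antisymmetry of the curvature tensor in its last two arguments; note that your statement of the symmetry should read $g(R^g(X,Y)Z,Z)=0$ rather than $g(R^g(X,Y)Z,Y)=0$, but you apply it correctly to $g(R^g(J,\dot\gamma)\dot\gamma,\dot\gamma)$.
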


\begin{proof}
    The vector field $\partial_\lambda$ is null, hence $g(\partial_\lambda, \partial_\lambda) =  0$. For the proof of the second identity in \eqref{eq:g-components-1}, we note that $\partial_t$ is the Jacobi field $ J = J(\lambda)$ along the null geodesic $\gamma = \gamma(\lambda)$ satisfying the initial conditions
    \be
        J(0)  = e_0,  \qquad D_\lambda J(0) = 0, 
    \ee
    where $D_\lambda$ denotes the covariant derivative of the Lorentzian metric $g$ along the geodesic $\gamma$. Applying standard theory for Jacobi fields (see e.g. Do Carmo \cite[Chapter 5, Proposition 3.6]{DoCarmo})  
    we see that $g(J,\dot\gamma)$ is constant along $\gamma$. It follows that
    \be
        g(J(\lambda),\dot\gamma(\lambda)) = g(J(0),\dot\gamma(0)) = g\left(e_0,e_0 + \sum_{i = 1}^n u_i e_i\right) = -1,
    \ee
    hence we have
    \be
        g(\partial_t, \partial_\lambda) = g(J,\dot\gamma) = -1 \quad \text{ on } V_q\setminus \eta_q
    \ee
    as claimed.
\end{proof}

\begin{lem}
    \label{lem:parallelVF} 
    Let $U_p$ be a uniform Temple neighborhood of $p\in N$ as in Theorem \ref{th-uniform-Temple}. Then, for any $q \in U_p$ the vector field $\partial_\lambda$ of the uniform Temple chart $\Phi_q:W_r\to V_q$ covering $U_p$  satisfies 
    \be 
        \label{eq-g_lambda_0}
        g(\partial_\lambda, e_0) = -1+ O(\lambda) \quad \text{and} \quad g(\partial_\lambda,e_i) = u_i  + O(\lambda) \quad \text{for} \quad i = 1,\ldots, n,
    \ee
    on $V_q\setminus \eta_q$, where $\lambda=\lambda_q$ is the radial function of the chart. 
\end{lem}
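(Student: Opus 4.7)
The plan is to view each function $\psi_a(\lambda) := g(\partial_\lambda, e_a)|_{\gamma(\lambda)}$ along the null geodesic $\gamma = \gamma_{(t,\vec u)}$ from \eqref{eq:null-geods}, compute its initial value $\psi_a(0)$, and then control $\psi_a'$ uniformly in $q \in U_p$, $t \in (-r,r)$ and $|\vec u| = 1$. For the initial values I would use that $\partial_\lambda|_{\lambda=0} = \dot\gamma(0) = e_0 + \sum_i u_i e_i$ at $\eta_q(t)$, together with the frame field identities of Definition \ref{defn:frame-field} ($g(e_0,e_0) = -1$, $g(e_0,e_i) = 0$, $g(e_i,e_j) = \delta_{ij}$), which immediately give $\psi_0(0) = -1$ and $\psi_i(0) = u_i$. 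These are the principal terms asserted in \eqref{eq-g_lambda_0}, so it remains to prove $\psi_a(\lambda) - \psi_a(0) = O(\lambda)$ uniformly.

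For the derivative, since $\gamma$ is a $g$-geodesic we have $D_\lambda \dot\gamma = 0$, so
\[
\psi_a'(\lambda) = g(\dot\gamma, \nabla_{\dot\gamma} e_a).
\]
Expanding $\dot\gamma = \sum_b \alpha^b(\lambda)\, e_b$ in the frame and writing $\nabla_{e_b} e_a = \Gamma_{ba}^c e_c$ for the connection coefficients, this becomes a linear system
\[
\psi_a'(\lambda) = \sum_{b,c} \alpha^b(\lambda)\, \Gamma_{ba}^c(\gamma(\lambda))\, \psi_c(\lambda).
\]
On the compact set $K_p$ the coefficients $\Gamma_{ba}^c$ are uniformly bounded, exactly as in the argument leading to \eqref{eq:AppendixBoundsNeededGeometric}.

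The main obstacle, and the step that requires the most care, is producing a uniform bound on the frame components $\alpha^b(\lambda)$ of $\dot\gamma$, independent of $(q,t,\vec u)$. I would obtain this directly from the smoothness of the framed exponential map in Lemma \ref{lem:framed-exp}: since $\gamma_{(t,\vec u)}(\lambda) = \EXP_{\eta_q(t)}(\lambda, \lambda u_1, \ldots, \lambda u_n)$, both $\gamma$ and $\dot\gamma$ depend continuously on $(q,t,\vec u,\lambda)$ on the compact parameter set $\overline{U_p} \times [-r,r] \times S^{n-1} \times [0,r]$, so each $\alpha^b$ is uniformly bounded. Combined with the bound on $\Gamma_{ba}^c$, the linear ODE above has uniformly bounded coefficients, so its solution $(\psi_0,\ldots,\psi_n)$ starting from bounded initial data is uniformly bounded on $[0,r]$, and therefore $|\psi_a'(\lambda)| \le C$ uniformly. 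Integrating from $0$ to $\lambda \in [0,r]$ yields $\psi_a(\lambda) = \psi_a(0) + O(\lambda)$, which is \eqref{eq-g_lambda_0}.
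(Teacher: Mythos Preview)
Your proof is correct and follows essentially the same strategy as the paper's: write the frame components of $\dot\gamma$, differentiate along the null geodesic using $D_\lambda\dot\gamma=0$, bound the derivative via the Christoffel symbols on $K_p$, and integrate from the known initial values.

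The only genuine difference is in how you bound the coefficients $\alpha^b$. You appeal to smoothness of $\EXP$ and compactness of the parameter set $\overline{U_p}\times[-r,r]\times S^{n-1}\times[0,r]$. The paper instead observes that $\alpha^b=\pm\psi_b$ (since $g(e_a,e_b)=\pm\delta_{ab}$), so the system is actually \emph{quadratic} in $\psi$; setting $h=\sum_a\psi_a^2=|\dot\gamma|_{g_R}^2$ one gets $|h'|\le C h^{3/2}$ and a direct Gronwall gives $h=2+O(\lambda)$, hence $|\alpha^b|\le C$. This is more self-contained: it uses only the bound $|\Gamma_{ab}^c|\le C$ on $K_p$, which matches the paper's stated convention that constants depend only on $g$, $\{e_a\}$ on $K_p$, the dimension, and $R$. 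Your compactness route is valid too, but as written it is slightly informal: the Temple data are only asserted to exist on the \emph{open} set $U_p\times(-r,r)\times S^{n-1}\times[0,r)$, so taking the closure requires a word about extending $\eta_q$ and $\EXP_{\eta_q(t)}$ to the boundary (which is harmless, since everything lives on strictly larger open sets in the proof of Theorem~\ref{th-uniform-Temple}). Once you notice $\alpha^b=\pm\psi_b$, the compactness detour becomes unnecessary and your argument collapses to the paper's.
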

\begin{proof} 
    The proof is very similar to that of Lemma \ref{lem:JacobiTime}. Given the frame field $\{e_a\}$ on $K_p$, our goal is to estimate the coefficients $h_a=h_a(\lambda)$ in the decomposition $\partial_\lambda =\sum_{a=0}^n h_a e_a$. Since $\partial_\lambda= \dot \gamma$ is parallel along $\gamma$, we have 
    \be
        \label{eq-Cov-Der}
        0 = D_\lambda \dot \gamma 
        = \sum_{a=0}^n h'_a e_a + \sum_{a,b,c = 0}^n h_a h_b \Gamma_{ab}^ce_c,
    \ee
    where the Christoffel symbols $\Gamma_{ab}^c$ are defined by $\Gamma_{ab}^ce_c = \nabla^g_{e_a}e_b$. Working on the compact set $K_p$, we are in a position to say that there is a constant $C > 0$ such that $|\Gamma_{ab}^c| < C$ (see our conventions for $C$ above). As a consequence, \eqref{eq-Cov-Der} yields
    \be
        \label{eq-par-covar}
        |h'_a| \leq C\sum_{b,c=0}^n |h_b h_c| \quad \text{for} \quad a=0,\ldots, n.
    \ee Next, we define $h = h(\lambda)$ by $h=g_R(\dot\gamma,\dot\gamma) = \sum_{a = 0}^n (h_a)^2$, where $g_R$ is the Riemannianization of the metric $g$ defined by \eqref{eq:Riemannization} using the frame field $\{e_a\}$. Then  $h' = \sum_{a=0}^n 2 h_a h'_a $, which in combination with \eqref{eq-par-covar} yields $|h'|\leq C h^{3/2}$ for $C>0$ as described above. Integrating and using $h(0)=2$, we obtain $h=2 + O(\lambda)$. With this estimate at hand, using the fact that $\dot\gamma(0) = e_0 + \sum_{i = 1}^n u^ie_i$,   we find from \eqref{eq-par-covar} that 
    \be
        h_0 = 1 + O(\lambda) \quad \text{and}  \quad h_i = u^i + O(\lambda) \quad \text{for $i = 1,\ldots, n$}.
    \ee
    The claim \eqref{eq-g_lambda_0} follows, recalling that $g(\partial_\lambda, e_0) = - g_R(\partial_\lambda, e_0) = - h_0$ and $g(\partial_\lambda, e_i) =  g_R(\partial_\lambda, e_i) =  h_i$.
\end{proof}

\begin{lem}
    \label{lem:components-2} 
    Let $U_p$ be a uniform Temple neighborhood of $p\in N$ as in Theorem \ref{th-uniform-Temple}. Then, for any $q \in U_p$ the coordinate vector field $\partial_t$ of the uniform Temple chart $\Phi_q: W_r \to V_q$ satisfies 
    \be \label{eq:JacobiField}
        g(\partial_t, \partial_t) = -1 + O(\lambda)  \quad \text{ on } V_q\setminus \eta_q
    \ee
    where $\lambda=\lambda_q$ is the radial function of the chart.
    More specifically, we have 
    \be \label{eqJCoeffs}
        \partial_t = (1 + O(\lambda_q))e_0 + \sum_{i = 1}^n O(\lambda_q) e_i  \quad \text{ on } V_q\setminus \eta_q.
    \ee   
\end{lem}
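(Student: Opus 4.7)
The plan is to identify $\partial_t$ with the Jacobi field $J(\lambda)$ along the null geodesic $\gamma=\gamma_{(t,\vec u)}$ of \eqref{eq:null-geods} determined by the initial conditions $J(0)=e_0$ and $D_\lambda J(0)=0$, and to establish the refined statement \eqref{eqJCoeffs}; the first claim \eqref{eq:JacobiField} then follows immediately by expanding $g(J,J)$ via the orthonormality relations $g(e_0,e_0)=-1$, $g(e_i,e_j)=\delta_{ij}$, $g(e_0,e_i)=0$, since the cross term is $O(\lambda)$ and the remaining contribution is $O(\lambda^2)$. I would mimic the structure of the proof of Lemma \ref{lem:JacobiTime} with two key modifications: the interval of integration is $[0,\lambda]$ with $\lambda\le r$ small, rather than $[0,1]$, and the null velocity $\dot\gamma$ has frame components of order one rather than of order $\epsilon$.

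Explicitly, let $\{\hat e_a\}$ denote the frame along $\gamma$ obtained by $\nabla^g$-parallel transport of $\{e_a\}|_{\gamma(0)}$, expand $J=\sum_a f_a\hat e_a$, and use the Jacobi equation to obtain $\sum_a f_a''\hat e_a = -R^g(J,\dot\gamma)\dot\gamma$. Running the Grönwall argument from the proof of Lemma \ref{lem:JacobiTime} for the quantity $f(\lambda):=|J|_{\hat g_R}^2+|D_\lambda J|_{\hat g_R}^2$, where $\hat g_R$ is the Riemannianized metric \eqref{eq:RiemannianizedHat}, and using the uniform bounds on $|R^g|_{\hat g_R}$ and $|\dot\gamma|_{\hat g_R}$ on the compact set $K_p$, yields the a priori bound $|J|_{\hat g_R}\le C$ on $[0,r]$. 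The right-hand side of the Jacobi equation is then $O(1)$ in $\hat g_R$-norm, so $|f_a''|\le C$; integrating twice using the initial values $f_0(0)=1$, $f_i(0)=0$, $f_a'(0)=0$ gives
\[
f_0(\lambda)=1+O(\lambda^2), \qquad f_i(\lambda)=O(\lambda^2).
\]

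To translate this expansion from $\{\hat e_a\}$ back to the background frame $\{e_a\}$, I would write $\hat e_a=\sum_b h_a^b e_b$ and derive a linear ODE for the coefficients $h_a^b$ from the parallelism condition $D_\lambda\hat e_a=0$. Since $|\dot\gamma|_{g_R}$ and the Christoffel symbols of $g$ in the frame $\{e_a\}$ are uniformly bounded on $K_p$, each $|(h_a^b)'|$ is bounded, so from $h_a^b(0)=\delta_a^b$ one obtains $|h_a^b(\lambda)-\delta_a^b|=O(\lambda)$, and hence $|\hat e_a-e_a|_{g_R}=O(\lambda)$. Writing $J=\sum_a f_a e_a+\sum_a f_a(\hat e_a-e_a)$ and combining both sets of bounds then yields \eqref{eqJCoeffs}. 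The main obstacle is that Lemma \ref{lem:JacobiTime} itself cannot be invoked directly: its conclusion is proportional to the velocity bound $\epsilon$, which for our null geodesic is of order one rather than of order $\lambda$. The remedy is to exploit the small parameter interval $[0,\lambda]$ throughout, so that the factors of $\lambda$ come from integration starting at the base point rather than from a small velocity.
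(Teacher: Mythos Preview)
Your argument is correct, but the paper takes a shorter route that you explicitly dismiss. You write that Lemma~\ref{lem:JacobiTime} ``cannot be invoked directly'' because the null velocity has frame components of order one; the paper observes that this obstruction disappears after the affine reparametrization $\tilde\gamma(s):=\gamma(\lambda s)$, $s\in[0,1]$. Then $\tilde\gamma'(s)=\lambda\,\dot\gamma(\lambda s)$, and Lemma~\ref{lem:parallelVF} gives $|g(e_a,\tilde\gamma')|=\lambda\,|g(e_a,\partial_\lambda)|=O(\lambda)$, so Lemma~\ref{lem:JacobiTime} applies verbatim with $\epsilon=C\lambda$; since the Jacobi field $\tilde J(s):=J(\lambda s)$ along $\tilde\gamma$ has the same initial data $\tilde J(0)=e_0$, $D_s\tilde J(0)=0$, the conclusion \eqref{eq:JacobiEstimate} at $s=1$ is exactly \eqref{eqJCoeffs}. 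Your approach instead re-runs the entire machinery of Lemma~\ref{lem:JacobiTime} (Gr\"onwall for $|J|_{\hat g_R}^2+|D_\lambda J|_{\hat g_R}^2$, double integration of $f_a''$, and the frame comparison $|\hat e_a-e_a|_{g_R}=O(\lambda)$) on the short interval $[0,\lambda]$, harvesting the factor of $\lambda$ from the integration length rather than from the velocity. This is perfectly valid and even yields the sharper intermediate bound $f_a(\lambda)-f_a(0)=O(\lambda^2)$ in the parallel frame, but it duplicates work already packaged in Lemma~\ref{lem:JacobiTime}; the paper's rescaling is the cleaner reduction.
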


\begin{proof}
    Given $z=\Phi_q(t,\vec{x})\in U_p$, we will apply Lemma \ref{lem:JacobiTime} with the compact set $K_p$ and the geodesic $\tilde \gamma: [0,1] \to K_p$ defined by
    \be
        \tilde \gamma(s) := \gamma(\lambda s)\, \text{ where } \gamma=\gamma_{(t,\vec x/|\vec x|)} \text{ is as in \eqref{eq:null-geods} and } \lambda=|\vec x|.
    \ee
    Clearly, $\tilde \gamma'(s) = \lambda  \gamma'(\lambda s)$ hence by \eqref{eq-g_lambda_0}, at $z=\Phi_q(t,\vec x)\in U_p$, we get
    \be
        |g(e_a,\tilde \gamma')| = \lambda |g(e_a,\partial_\lambda)| = O(\lambda)  
        \quad \text{for all $a = 0,\dots, n$}.
    \ee 
    The result follows from \eqref{eq:JacobiEstimate} in Lemma  \ref{lem:JacobiTime} applied to $\tilde \gamma = \tilde \gamma (s)$ with $\epsilon = C \lambda$ for $C>0$ as described in the beginning of this section.
\end{proof}

We now have all the ingredients ready to prove the main result of this section.

\begin{proof}[Proof of Proposition \ref{prop-g_R}]
    Given $p\in N$, let $K_p$, $\{e_a\}$, and $R=R(p)$ be as in Theorem \ref{th-uniform-Temple}. Given any $r\in (0,R]$, let $U_p=U_{p,r}$ be a uniform Temple neighborhood of $p$ and let  $\Phi_q:W_r\to V_q$ be a uniform Temple chart with the optical function $\omega_q: V_q \to \mathbb{R}$. We recall that the frame field $\{e_a\}$ used to define the Riemannian metric $g_R$ on $K_p$  by  \eqref{eq:Riemannization} is orthonormal with respect to $g_R$. Consequently, the gradient of the optical function $\omega_q$  with respect to $g_R$, as defined in Section \ref{sec:notations}, can be written as 
    \be
        \nabla^{g_R} \omega_q = e_0 (\omega_q) e_0 + e_1 (\omega_q) e_1 + \ldots + e_n (\omega_q) e_n, 
    \ee
    and we have
    \be
       |\nabla^{g_R}\omega_q|_{g_R}^2 = g_R(\nabla^{g_R} \omega_q, \nabla^{g_R} \omega_q) = (e_0(\omega_q))^2 + (e_1(\omega_q))^2 + \ldots + (e_n(\omega_q))^2.
    \ee 
    Since $\omega_q$ satisfies the eikonal equation $g(\nabla^g \omega_q, \nabla^g \omega_q) = 0$ on $V_q\setminus \eta_q$ we have 
    \be
        (e_0(\omega_q))^2 = (e_1(\omega_q))^2 + \ldots + (e_n(\omega_q))^2.
    \ee
    We thereby obtain
    \be
        \label{eq:GradientReformulation}
         |\nabla^{g_R}\omega_q|_{g_R}^2 
         = 2(e_0 (\omega_q))^2. 
    \ee
    In turn, applying  \eqref{eqJCoeffs} we get
    \be
    \begin{split}
        e_0(\omega_q) 
       & = \frac{1}{1 + O(\lambda_q)}\left( \partial_t(\omega_q) - \sum_{i = 1}^n O(\lambda_q)e_i(\omega_q) \right)\\
        & = \partial_t (\omega_q)(1 + O(\lambda_q)) + \sum_{i = 1}^n O(\lambda_q)e_i(\omega_q).
        \end{split}
    \ee
    Since the optical function $\omega_q$ is just the coordinate function $t$ of the Temple chart $\Phi_q:W_r\to V_q$, we have $\partial_t (\omega_q) = 1$. We also  note that
    \be
        |e_i(\omega_q)| = |g_R(\nabla^{g_R} \omega_q, e_i)| \leq |\nabla^{g_R} \omega_q|_{g_R} |e_i|_{g_R} = |\nabla^{g_R} \omega_q|_{g_R}, \text{ for } i=1,\ldots,n.  
    \ee
     Consequently, we have
    \be
        e_0(\omega_q) 
        = 1 + O(\lambda_q)+ O(\lambda_q)|\nabla^{g_R}\omega_q|_{g_R}.
    \ee
    Combining this with \eqref{eq:GradientReformulation} 
    we find that
    \be
        |\nabla^{g_R}\omega_q|_{g_R} = \sqrt{2}+ O(\lambda_q) + O(\lambda_q)|\nabla^{g_R}\omega_q|_{g_R},
    \ee
    which yields $|\nabla^{g_R}\omega_q|_{g_R} = \sqrt{2} + O(\lambda_q)$ as claimed.
\end{proof}

In conclusion, we note the following corollary that will be useful in the next section where we study applications of uniform Temple charts.

\begin{cor} 
    \label{omega-Lip} 
    Given $p\in N$ we can choose $R=R(p)$ in Theorem \ref{th-uniform-Temple} so that for all $0<r\leq R$ the uniform Temple neighborhood $U_p=U_{p,r}$ of $p$ as defined in Theorem \ref{th-uniform-Temple} has the following property: for any $q\in U_p$  the optical function $\omega_q$ of the uniform Temple chart $\Phi_q: W_r\to V_q$ satisfies the estimate
    \be \label{eqUniLipBound}
        \sup\left\{ \frac{|\omega_q(z)-\omega_q(z')|}{d_{g_R}(z,z')} \,: \, z\neq z' \in V_q \, \right\} < 2,
    \ee
    where $d_{g_R}$ is the distance defined with respect to the Riemannian metric $g_R$ on  $V_q$.
\end{cor}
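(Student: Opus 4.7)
The plan is to convert the pointwise gradient estimate of Proposition~\ref{prop-g_R} into a global Lipschitz bound by integration along $g_R$-almost-minimizing paths, and to pin down the constant by choosing $r$ so small that the factor $\sqrt{2}+Cr$ stays strictly below $2$.

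First, I would invoke Proposition~\ref{prop-g_R} to get a constant $C=C(g,\{e_a\}|_{K_p},n,R)$ with
\be
    |\nabla^{g_R}\omega_q|_{g_R} < \sqrt{2}+C\lambda_q \qquad \text{on } V_q\setminus\eta_q,
\ee
for every $q\in U_p$ and every admissible radius $r\in(0,R]$. Since the radial function $\lambda_q$ is by construction bounded by $r$ throughout $V_q=\Phi_q(W_r)$, this gives the uniform bound $|\nabla^{g_R}\omega_q|_{g_R}<\sqrt{2}+Cr$ on $V_q\setminus\eta_q$. I then shrink $R$ once and for all so that $\sqrt{2}+CR<2$; for any $r\in(0,R]$ this forces
\be
    |\nabla^{g_R}\omega_q|_{g_R} \le \sqrt{2}+Cr < 2 \qquad \text{a.e. on } V_q.
\ee

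Next, for arbitrary $z\neq z'\in V_q$ and $\epsilon>0$, I pick a piecewise smooth curve $\gamma:[0,1]\to V_q$ from $z$ to $z'$ whose $g_R$-length satisfies $L_{g_R}(\gamma)\le d_{g_R}(z,z')+\epsilon$. Since $\eta_q$ is a smooth one-dimensional submanifold of the $(n+1)$-dimensional open set $V_q$ and $\omega_q$ is smooth on $V_q\setminus\eta_q$, a standard transversality/perturbation argument lets me assume $\gamma^{-1}(\eta_q)$ has Lebesgue measure zero; this is enough for $\omega_q\circ\gamma$ to be absolutely continuous with derivative $\langle\nabla^{g_R}\omega_q,\dot\gamma\rangle_{g_R}$ almost everywhere. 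Then by the Fundamental Theorem of Calculus and Cauchy--Schwarz,
\be
    |\omega_q(z)-\omega_q(z')|
    \le \int_0^1 |\nabla^{g_R}\omega_q|_{g_R}\,|\dot\gamma|_{g_R}\,ds
    \le (\sqrt{2}+Cr)\,L_{g_R}(\gamma)
    \le (\sqrt{2}+Cr)\bigl(d_{g_R}(z,z')+\epsilon\bigr).
\ee
Sending $\epsilon\to 0$ gives $|\omega_q(z)-\omega_q(z')|\le(\sqrt{2}+Cr)\,d_{g_R}(z,z')$, and since $\sqrt{2}+Cr<2$ this yields the desired strict supremum bound \eqref{eqUniLipBound}, uniformly in $q\in U_p$.

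The only subtle point is the handling of the singular axis $\eta_q$, where $\omega_q$ is merely continuous. If one prefers to avoid the perturbation/transversality argument, the conclusion can equivalently be obtained from the standard fact that a continuous function on a connected open subset of a Riemannian manifold whose distributional gradient is essentially bounded by $L$ is $L$-Lipschitz with respect to the intrinsic distance; the set $\eta_q$ where smoothness fails is closed, has Hausdorff codimension at least $n\ge 1$, and in particular has zero $g_R$-measure, so it plays no role in the essential supremum. Either route delivers the uniform Lipschitz constant strictly less than $2$.
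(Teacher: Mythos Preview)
Your proof is correct and follows essentially the same route as the paper: shrink $R$ so that Proposition~\ref{prop-g_R} forces $|\nabla^{g_R}\omega_q|_{g_R}<2$ on $V_q\setminus\eta_q$, then integrate the gradient bound along $g_R$-almost-minimizing curves that are perturbed to meet the central geodesic $\eta_q$ only negligibly. The paper phrases the perturbation as ``hits $\eta_q$ at most finitely many times'' (which is what transversality actually yields and which makes the absolute-continuity of $\omega_q\circ\gamma$ immediate), whereas you say ``measure zero''; your alternative distributional-gradient remark covers any residual worry here.
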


\begin{proof} 
    Given $p\in N$, we first choose $R=R(p)$ as in Theorem \ref{th-uniform-Temple} and we let  $U_p=U_{p,r}$ denote a uniform Temple neighborhood of $p$ for  $r\in (0,R]$. Next, in the view of Proposition \ref{prop-g_R}, we note that we may redefine $R=R(p)$ in Theorem \ref{th-uniform-Temple} so that for all $r\in(0, R]$ and $q\in U_p$  the optical function $\omega_q$ of the uniform Temple chart $\Phi_q: W_r\to V_q$  satisfies $|\nabla^{g_R} \omega_q|_{g_R} \leq 2$. 
    
    With these preliminaries at hand, we may now argue as in the proof of \cite[Lemma 3.9]{Sak-Sor-null}. Given $z\neq z'\in V_q$, we let $\beta_i:[0,1]\to V_q$  denote a family of piecewise smooth curves from $\beta_i(0)=z$ to $\beta_i(1)=z'$ such that
    \be
        \lim_{i\to \infty} L_{g_R}(\beta_i) = d_{g_R}(z,z').
    \ee
    We can assume that each $\beta_i$ hits the central geodesic of the chart, $\eta_q$, at most finitely many times, so that the optical function $\omega_q$ is differentiable along $\beta_i$ away from those times.  As a consequence, we have
    \be
        \begin{split}
            |\omega_q(z)-\omega_q(z')| 
            & \leq \int_0^1 | \tfrac{d}{d\sigma} \omega_q(\beta_i(\sigma)) | \, d\sigma 
            \\
            &= \int_0^1 |g_R( \nabla^{g_R} \omega_q (\beta_i(\sigma)), \beta_i'(\sigma))| \, d\sigma \\
            & \le \int_0^1 |\nabla^{g_R} \omega_q (\beta_i(\sigma))|_{g_R} |\beta_i'(\sigma))|_{g_R} \, d\sigma \\
            & \le 2 \int_0^1 |\beta_i'(\sigma) |_{g_R} \,d\sigma 
            = 2 L_{g_R}(\beta_i).
        \end{split}
    \ee
    Taking the limit as $i\to \infty$ we obtain
    \be
        \frac{|\omega_q(z)-\omega_q(z')|}{d_{g_R}(z,z')} \le 2 \text{ for all } z\neq z' \in V_q.
    \ee
    which proves the result.      
\end{proof}

\section{Applications of Uniform Temple Charts}
\label{sec:application}
\subsection{Review of Time functions and Null distance} \label{sec:review}

In order to prepare the reader to applications of our result on existence of uniform Temple charts, in this section we recall the notion of time function and the associated definition of null distance.

Given a spacetime $(N,g)$, a \emph{generalized time function} is a real valued function, $\tau: N\to \mathbb{R}$, that is strictly increasing along all future directed causal curves. Continuous generalized time functions are called \emph{time functions}.  For example, any \emph{temporal} function, that is a smooth function $\tau: N\to \mathbb{R}$ such that $\nabla \tau (p)$ is timelike past pointing for all $p\in N$, is a time function. In general, there are time functions which are not smooth, as the following example shows.

\begin{example}
    \label{ex-cosmotime}
    The \emph{cosmological time function} was defined by Andersson, Galloway, and Howard in \cite{AGH} (see also Wald and Yip \cite{Wald-Yip}) as follows:
    \be 
        \tau_{g}(p)= \sup \{L_g(C)\,| \, \textrm{ future timelike } C:[0,1]\to N \text{ with } C(1)=p\}
    \ee
    where $L_g(C)$ is the \emph{Lorentzian length} of a $C^1$-curve $C$ defined by
    \be
        L_g(C)=\int_0^1 |g(C'(s),C'(s))|^{1/2}\,ds.
    \ee
    If $\tau_g(q)<\infty$ for all $q \in N$ and if $\tau_g \to 0$ on past inextendible causal curves then the cosmological time function $\tau_g$ is said to be \emph{regular}. In general, a regular cosmological time function is only Lipschitz with $g(\nabla \tau_g, \nabla \tau_g) = -1$ almost everywhere, see \cite{AGH}. 
\end{example}

Although some of the results that we review below hold in greater generality, for the purposes of this paper it will be convenient to work with the following class of time functions introduced by Burtscher and Garc\'ia-Heveling in \cite[Definition 1.6]{Burtscher-Garcia-Heveling-22}:

\begin{definition}
    \label{def:WeakTemporalFunction}
    We say that a time function $\tau:N \to \mathbb{R}$ is \emph{weak temporal} if for every point $p \in N$ there is a constant \( C > 1\) and a neighborhood $U$ of $p$ that has a Riemannian metric $g_U$ with a definite distance function $d_U: U \times U \to [0,\infty)$ such that we have
    \be
        \label{eq:WeakTemporal}
        \frac{1}{C}d_U(q,q') 
        \leq |\tau(q') - \tau(q) |
        \leq Cd_U(q,q') \quad \text{ for all } \quad (q,q') \in J^+ \cap (U\times U).
    \ee
\end{definition}

Examples of weak temporal functions include the aforementioned temporal functions and regular cosmological time functions as in Example \ref{ex-cosmotime}, see Burtscher and Garc\'ia-Heveling \cite[Lemma 2.12]{Burtscher-Garcia-Heveling-22}. We note that the first inequality in \eqref{eq:WeakTemporal} is the \emph{locally anti-Lipschitz condition} of Chru\'sciel, Grant and Minguzzi, see  \cite{CGM}, while the second inequality is referred to as \emph{locally Lipschitz condition}. Importantly, weak temporal functions are indeed locally Lipschitz in the usual sense, see Burtscher and Garc\'ia-Heveling \cite[Proposition 2.11]{Burtscher-Garcia-Heveling-22}.


Given a generalized time function $\tau: N  \to \mathbb{R}$, we can equip $(N,g)$ with \emph{null distance} as defined by Sormani and Vega in \cite{SV-null}:

\begin{definition} 
    \label{def:null}
    Let $\tau: N \to \mathbb{R}$ be a generalized time function on a spacetime $(N,g)$. Let $ \beta: [a,b] \to N$ be a piecewise causal curve with break points $x_i = \beta(s_i)$, $0 \leq i \leq m$, where each smooth causal segment may be either future pointing or past pointing. The \emph{null length of $\beta$} is defined by
    \be
        \hat{L}_\tau (\beta) = \sum_{i=1}^m |\tau(x_i)-\tau(x_{i-1})|.
    \ee
    The \emph{null distance} between $p,q\in N$ is defined by
    \be
        \dhat_\tau (p,q)=  \inf \{ \hat{L}_\tau(\beta) : \beta \textrm{ piecewise casual from } p \textrm{ to } q \textrm{ via } x_i \in \beta\}.
    \ee
\end{definition}

In general, $\dhat_\tau: N\times N \to \mathbb{R}$ is only a pseudometric for which the property 
\be 
    \label{eq:definite}
    \dhat_\tau(p,q)=0 \Rightarrow p=q
\ee
may fail. On the other hand, when  $\tau$ is a weak temporal function, the result of Sormani and Vega \cite[Theorem 4.6]{SV-null} implies that $(N,\dhat_\tau)$ is a definite metric space satisfying (\ref{eq:definite}) where $\dhat_\tau$ induces the manifold topology.

Sormani and Vega also proved that for any generalized time function $\tau$ we have 
\be
    \label{eq:encodes-causality}
    q \in J^+(p) \implies \hat{d}_\tau(p,q) = \tau(q) - \tau(p),
\ee
see \cite[Lemma 3.10]{SV-null}. They also showed that the converse is true on Minkowski space when equipped with the standard time function \( \tau(t,\vec x) = t \), where the $\hat{d}_\tau$-balls are cylinders that are perfectly aligned with null cones, and conjectured it was true more generally. Later, Sakovich and Sormani showed that while the converse does not hold true in general (see Examples 2.1-2.2 in \cite{Sak-Sor-null}), it is true for locally anti-Lipschitz generalized time functions with compact level sets (see \cite[Theorem 4.1]{Sak-Sor-null}). See also subsequent work of Burtscher and Garc\'ia-Heveling in \cite{Burtscher-Garcia-Heveling-22} and Galloway in \cite{Galloway-Null}, where this result was generalized. 

Sormani and Vega originally defined the null distance as a part of the program aiming to define weak convergence of spacetimes which do not converge smoothly, following a suggestion of Shing-Tung Yau and Lars Andersson. As described in \cite{Sormani-Oberwolfach-18}, the plan was to convert a sequence of spacetimes canonically into metric spaces using the cosmological time function and then take the intrinsic flat limit of the sequence. This approach was tested for warped product spacetimes with smooth cosmological time functions by Allen and Burtscher in \cite{Allen-Burtscher-20}. In a similar spirit, Allen and Burtscher \cite{Allen-Burtscher-20} and Allen  \cite{Allen-23} studied Gromov-Hausdorff convergence of warped products converted into definite metric spaces using their null distances, while Kunzinger and Steinbauer \cite{Kunzinger-Steinbauer-21} did similar investigations in the framework of Lorentzian length spaces. At the same time, Graf and Sormani \cite{Graf-Sormani} obtained estimates required for studying convergence of more general spacetimes. More recently, Sakovich and Sormani \cite{Sak-Sor-24} used the null distance to introduce several  notions of distances, along with associated notions of convergence, for broad classes of spacetimes $(N,g)$ equipped with regular cosmological time functions $\tau_g$ and associated null distances $\dhat_g=\dhat_{\tau_g}$. Some of these notions, most notably \emph{intrinsic timed-Hausdorff distance}, were proven to be definite using the causality encoding theorem of Galloway \cite[Theorem 3]{Galloway-Null} and the isometry theorem of Sakovich and Sormani \cite[Theorem 1.3]{Sak-Sor-null}. For more details on this, and for comparison with other notions of distances between spacetimes please see \cite{Sak-Sor-24}. For further results on intrinsic timed-Hausdorff distance, please see \cite{Che-Per-Sor}.

In the remaining sections we will prove a more general version of the aforementioned isometry theorem \cite[Theorem 1.3]{Sak-Sor-null}, by applying our uniform Temple charts constructed in Section \ref{sec:Temple}. We will also use these charts to show that the metric space $(N,\dhat_\tau)$ is countably rectifiable. These results will be applied in the upcoming work \cite{future-work} on intrinsic flat convergence of spacetimes.  

\subsection{Uniform Temple Charts are bi-Lipschitz}
\label{sec:biLip}
 In this section we prove Theorem \ref{thm:Lip-intro} which we restate here for the reader's convenience: 

\begin{thm}
    \label{thm:Lip}
    Let $(N,g)$ be a spacetime equipped with a weak temporal function \( \tau \) 
    and let $\dhat_\tau$ be the associated null distance. Given $p\in N$ we can choose its uniform Temple neighborhood $U_p$ so that for any $q\in U_p$ the restriction of the uniform Temple chart $\Phi_q: W_r \to \Phi_q(W_r)\supset U_p $ to $\Phi_q^{-1}(U_p)$ satisfies  
    \be
        \quad \Phi_q: (\Phi_q^{-1}(U_p), d_{\mathbb{E}^{n+1}}) \to (U_p,\dhat_\tau) \quad  \text{is bi-Lipschitz}.
    \ee
\end{thm}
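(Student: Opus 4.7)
My strategy is to factor the claimed bi-Lipschitz equivalence through the Riemannian distance $d_{g_R}$ on $V_q = \Phi_q(W_r) \supseteq U_p$, where $g_R$ is the Riemannianization \eqref{eq:Riemannization} of $g$ associated with the distinguished frame $\{e_a\}$. The proof then splits into two parts: (A) showing that $\Phi_q : (\Phi_q^{-1}(U_p), d_\mathbb{E}) \to (U_p, d_{g_R})$ is bi-Lipschitz, uniformly in $q \in U_p$; and (B) showing that the identity $(U_p, d_{g_R}) \to (U_p, \hat{d}_\tau)$ is bi-Lipschitz. Composing (A) and (B) yields the theorem.

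For (A), Lipschitzness of $\Phi_q$ is immediate from the defining formula \eqref{eq:TempleAgain}: the map $\vec{x}\mapsto|\vec{x}|$ is $1$-Lipschitz, and the remaining ingredients ($\exp$, $\eta_q$, and the frame) are smooth with uniformly bounded derivatives on the relevant compact domains. Lipschitzness of $\Phi_q^{-1}$ is handled componentwise: the $t$-component is the optical function $\omega_q$, which is uniformly $d_{g_R}$-Lipschitz by Corollary~\ref{omega-Lip}; the spatial components can be extracted via $x_i(z) = g(\exp^{-1}_{\eta_q(\omega_q(z))}(z), e_i)$, a Lipschitz composition of $\omega_q$ with smooth maps (the inverse exponential on a normal neighborhood, and contraction with $e_i$). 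For (B), the lower bound $\hat d_\tau \geq c\, d_{g_R}$ comes from the anti-Lipschitz hypothesis of Definition~\ref{eqAntiLip}: after shrinking $U_p$ to sit inside an anti-Lipschitz neighborhood $U$ with comparison distance $d_U$ equivalent to $d_{g_R}$ on $\overline{U_p}$, any piecewise causal $\beta$ from $z_1$ to $z_2 \in U_p$ with $\hat L_\tau(\beta) \leq 2\hat d_\tau(z_1,z_2)$ has all its breakpoints in $U$ (by Lipschitzness of $\tau$ controlling the $g_R$-diameter of $\beta$), so summing $|\tau(x_i) - \tau(x_{i-1})| \geq d_U(x_i,x_{i-1})$ and applying the triangle inequality in $d_U$ yields $\hat L_\tau(\beta) \geq d_U(z_1,z_2) \geq c\, d_{g_R}(z_1,z_2)$.

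The upper bound $\hat d_\tau \leq C\, d_{g_R}$ is the delicate step. For $z_i = \Phi_q(t_i, \vec{x}_i) \in U_p$, $i = 1,2$, I would construct a common causal future point $z_* \in J^+(z_1) \cap J^+(z_2) \cap U_p$ satisfying $\tau(z_*) - \tau(z_i) \leq C\, d_{g_R}(z_1,z_2)$ for each $i$; the concatenation of the two causal legs $z_i \to z_*$ then has null length bounded by $\sum_i (\tau(z_*)-\tau(z_i)) \leq 2C\,\Lip(\tau)\,d_{g_R}(z_1,z_2)$. Modeling on Minkowski space, a natural candidate is $z_* = \Phi_q(t_*, \vec{x}_*)$ with $\vec{x}_* = \tfrac12(\vec{x}_1+\vec{x}_2)$ and $t_*$ slightly above $\max(t_1,t_2) + \tfrac12|\vec{x}_1 - \vec{x}_2|$. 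The main obstacle is verifying that this $z_*$ actually lies in $J^+(z_1) \cap J^+(z_2)$ in the general curved setting, where the common causal future deviates from its flat-space model. To do this I would apply Theorem~\ref{th-uniform-Temple} to produce uniform Temple charts centered at $z_1$ and $z_2$, invoke the causality characterization \eqref{eq:opt-causal} in each, and use the uniform gradient bound on their optical functions from Proposition~\ref{prop-g_R} to confirm the containment quantitatively. If the direct construction proves too brittle, an alternative is a null-zigzag approximation of a $g_R$-minimizing curve from $z_1$ to $z_2$, exploiting that on the compact set $K_p$ the null cones of $g$ have uniformly bounded aperture with respect to $g_R$, so any smooth curve can be tracked by a piecewise causal one with bounded overhead in null length.
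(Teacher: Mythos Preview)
Your overall architecture---factoring through $d_{g_R}$ and splitting into parts (A) and (B)---matches the paper exactly (Propositions~\ref{prop:Lip2} and~\ref{prop:Lip1}). Part (A) and the lower bound of (B) are essentially the same as the paper's. One small imprecision: your justification ``by Lipschitzness of $\tau$ controlling the $g_R$-diameter of $\beta$'' for why breakpoints of a near-optimal piecewise causal curve stay in $U$ points in the wrong direction; Lipschitzness of $\tau$ gives $|\Delta\tau|\le K' d_{g_R}$, not the reverse. The correct localization uses the $\hat d_\tau$-triangle inequality: each breakpoint $x_i$ satisfies $\hat d_\tau(z_1,x_i)\le \hat L_\tau(\beta)\le 2\hat d_\tau(z_1,z_2)$, so shrinking $U_p$ to a small $\hat d_\tau$-ball whose suitable enlargement lies in $U$ does the job.

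Where you genuinely diverge from the paper is the upper bound $\hat d_\tau\le C\,d_{g_R}$. You work in a fixed chart $\Phi_q$, seek a common future point $z_*\in J^+(z_1)\cap J^+(z_2)$, and propose certifying the causal relation via the optical functions $\omega_{z_i}$ of two auxiliary charts---workable but laborious, as you note. The paper exploits the freedom of Theorem~\ref{th-uniform-Temple} more directly: it takes the chart centered at $z_1$ itself. Writing $z_2=\Phi_{z_1}(t_2,\vec x_2)$, the point $q_{12}:=\eta_{z_1}(t_2)=\Phi_{z_1}(t_2,\vec 0)$ lies on the central axis, so $z_1\to q_{12}$ is the timelike geodesic $\eta_{z_1}$ and $q_{12}\to z_2$ is a radial null geodesic of the chart \emph{by construction}. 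This yields a two-segment piecewise causal curve immediately, with no need to verify causal relations via auxiliary charts. Bounding $|\tau(z_1)-\tau(q_{12})|+|\tau(q_{12})-\tau(z_2)|$ then reduces to the Lipschitz bound on $\tau$ together with $|t_2|=|\omega_{z_1}(z_1)-\omega_{z_1}(z_2)|\le 2\,d_{g_R}(z_1,z_2)$ from Corollary~\ref{omega-Lip}. Your common-future-point and zigzag alternatives are not wrong, but the paper's construction shows that a single well-placed Temple chart already delivers the causal path without any perturbation argument.
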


We begin the proof with the following lemma.

\begin{lem}\label{lemmaSimple}
    Let $(N,g)$ be a spacetime equipped with a weak temporal function \( \tau \). 
    Given $p\in N$ we can choose its uniform Temple neighborhood $U_p$ so that the following holds: 
    \begin{itemize}
        \item[(1)] For all $q\in U_p$ the uniform Temple charts $\Phi_q: W_r \to V_q=\Phi_q(W_r)$ satisfy 
        \be\label{eq:Inclusion}
            U_p \subset V_q \subset K_p \subset U,
        \ee
        where $U$ is the neighborhood of $p$ as in Definition \ref{def:WeakTemporalFunction} and $K_p$ is the compact set equipped with the frame field $\{e_a\}$ as described in Theorem \ref{th-uniform-Temple}. 
        \item[(2)] Let $g_R$ denote the Riemannian metric defined by  \eqref{eq:Riemannization} on $K_p$ and let $d_{g_R}$ be the associated Riemannian distance. Then there is a constant $\widetilde{C}>1$ that may only depend on the Lorentzian metric $g$ and the frame field $\{e_a\}$ on the compact set $K_p$ as well as $g_U$ and $C$ as in Definition \ref{def:WeakTemporalFunction},   
        such that  we have
        \be
            \label{eqCGM2}
           \frac{1}{\widetilde{C}}\, d_{g_R}(q,q') \leq |\tau(q)-\tau(q')| \leq \widetilde{C}d_{g_R}(q,q') \,\,\,\, \text{for all} \,  q,q'\in J^+\cap (U_p\times U_p).
        \ee
    \end{itemize}
\end{lem}

\begin{proof}
    We recall from the proof of Theorem \ref{th-uniform-Temple} that $K_p$ is of the form $K_p=F(\overline{W_{3 r/4}})$,  where $F$ is a smooth map as defined in  Proposition \ref{prop:FoliationByGeodesics}, so it is clear that by choosing $r > 0$ to be sufficiently small we can ensure that $K_p \subset U $, where $U$ is an open set equipped with the Riemannian metric $g_U$ and the associated distance $d_U$ as in the formulation of this lemma. This implies \eqref{eq:Inclusion}, since the first two inclusions were proven in Theorem \ref{th-uniform-Temple}. 

    For proving the second claim of the lemma, we first recall that a weak temporal function $\tau:N\to\mathbb{R}$ satisfies \eqref{eq:WeakTemporal}. Due to the inclusion \( U_p \subset U \) we have
    \be
        \label{eq:WeakTemporal2}
        \frac{1}{C}d_U(q,q') 
        \leq |\tau(q') - \tau(q) |
        \leq Cd_U(q,q') \text{ for all } q,q' \in J^+ \cap (U_p\times U_p).
    \ee
    In order to promote this estimate to \eqref{eqCGM2} it suffices to note that the distances associated to the continuous Riemannian metrics $g_U$ and $g_R$, both defined on the compact set $K_p$, are equivalent, see e.g. \cite[Theorem 4.5]{Burtscher-12}. 
\end{proof}
 

Based on this lemma, we can prove  the following two propositions, that combined together will imply the result of Theorem \ref{thm:Lip}.

\begin{prop}
    \label{prop:Lip1}
    Let $(N,g)$ be a spacetime equipped with a weak temporal function \( \tau \). 
    Given $p\in N$ let  $U_p$ be its uniform Temple neighborhood as described in Lemma \ref{lemmaSimple}. 
    Then the identity map $\mathrm{id}: (U_p, d_{g_R}) \to (U_p,\dhat_\tau)$ is bi-Lipschitz. In particular, there is a constant \( C' > 1 \) depending only on the constant \(\widetilde{C}\) of Lemma \ref{lemmaSimple} 
    such that we have 
    \be 
        \label{eqDistComparison}
        \frac{1}{C'} d_{g_R} (q,q') 
        \leq \hat{d}_\tau (q,q') 
        \leq C' d_{g_R} (q,q') 
    \ee 
    for all $q,q'\in U_p$.
\end{prop}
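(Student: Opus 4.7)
The plan is to prove the two inequalities in \eqref{eqDistComparison} separately, using the uniform Temple chart centered at $q$ to construct a short admissible piecewise causal curve for the upper bound on $\hat d_\tau$, and the anti-Lipschitz estimate \eqref{eqCGM2} applied segment-by-segment for the lower bound.

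For the inequality $\hat d_\tau(q,q')\leq K\, d_{g_R}(q,q')$, I would use the uniform Temple chart $\Phi_q: W_r \to V_q$ at $q$, which covers $U_p$ by Theorem \ref{th-uniform-Temple}, to write $q'=\Phi_q(t_0,\vec x_0)$, and take $\beta$ to be the piecewise causal curve from $q$ to $q'$ consisting of the segment of the central geodesic $\eta_q$ from $q=\eta_q(0)$ to $\eta_q(t_0)$ (future directed if $t_0\geq 0$, past directed otherwise) followed by the future null geodesic of the chart from $\eta_q(t_0)$ to $q'$. Since $|e_0|_{g_R}=1$ and $\dot\eta_q=e_0$, we have $d_{g_R}(\eta_q(t_0),q)\leq |t_0|$, and by Lemma \ref{lem:parallelVF} the $g_R$-speed of the null geodesic is uniformly bounded on $K_p$, giving $d_{g_R}(q',\eta_q(t_0))\leq C_1|\vec x_0|$ for a constant $C_1$ depending only on $g$ and $\{e_a\}$ on $K_p$. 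Combining with the Lipschitz bound \eqref{eqLocLip} for $\tau$,
\[
\hat d_\tau(q,q') \leq \hat L_\tau(\beta) = |\tau(\eta_q(t_0))-\tau(q)| + |\tau(q')-\tau(\eta_q(t_0))| \leq K'\bigl(|t_0|+C_1|\vec x_0|\bigr).
\]
Corollary \ref{omega-Lip} gives $|t_0|=|\omega_q(q')-\omega_q(q)|\leq 2\, d_{g_R}(q,q')$, and for $|\vec x_0|$ I would derive an analogous Lipschitz estimate for $\lambda_q$ using Lemmas \ref{lem:components-1} and \ref{lem:components-2}, or equivalently establish uniform bi-Lipschitz control of $\Phi_q$ from the Euclidean metric on $W_r$ to the $g_R$-metric on $V_q$ as a smooth perturbation of the explicit Minkowski case (where $\Phi_q$ is a bi-Lipschitz cone map).

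For the reverse inequality $d_{g_R}(q,q')\leq K\, \hat d_\tau(q,q')$, I would first argue that after possibly shrinking $U_p$, any piecewise causal curve $\beta$ with null length sufficiently close to $\hat d_\tau(q,q')$ can be taken (after a suitable subdivision) to lie in $U_p$ with all break points $q=x_0,\ldots, x_m=q'$ in $U_p$. This uses that $\hat d_\tau$ generates the manifold topology (Sormani-Vega \cite{SV-null}) together with the compact containment $\overline{U_p}\subset U$ from Lemma \ref{lemmaSimple}, so that curves of small null length cannot wander far from $\overline{U_p}$. Applying the anti-Lipschitz estimate \eqref{eqCGM2} to each causal segment and summing with the triangle inequality,
\[
\hat L_\tau(\beta) = \sum_i |\tau(x_i)-\tau(x_{i-1})| \geq \frac{1}{C}\sum_i d_{g_R}(x_{i-1},x_i) \geq \frac{1}{C}\, d_{g_R}(q,q'),
\]
so taking the infimum over $\beta$ yields the claim with $K$ depending only on the constants $C$ and $K'$ of Lemma \ref{lemmaSimple}.

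The main obstacle will be establishing the uniform bi-Lipschitz control of the Temple chart from the Euclidean to the $g_R$-metric, needed to bound $|\vec x_0|$ by $d_{g_R}(q,q')$ in the first direction despite the cone singularity of $\Phi_q$ along its central geodesic; an auxiliary difficulty is confining near-minimizing piecewise causal curves to $U_p$ in the reverse direction so that \eqref{eqCGM2} applies segment-wise. Both ultimately reduce to compactness arguments on $\overline{U_p}\subset K_p\subset U$ combined with the topological equivalence of $\hat d_\tau$ and $d_{g_R}$ already set up in Lemma \ref{lemmaSimple}.
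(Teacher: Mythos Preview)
Your strategy is correct and close to the paper's, but you miss a key simplification in the upper bound. After writing $q'=\Phi_q(t_0,\vec x_0)$ and setting $q_{12}=\eta_q(t_0)$, both you and the paper arrive at
\[
\hat d_\tau(q,q') \;\leq\; K'\bigl(d_{g_R}(q,q_{12}) + d_{g_R}(q_{12},q')\bigr).
\]
Rather than bounding $d_{g_R}(q_{12},q')$ via $|\vec x_0|$, the paper simply applies the triangle inequality $d_{g_R}(q_{12},q') \leq d_{g_R}(q_{12},q) + d_{g_R}(q,q')$, reducing everything to $d_{g_R}(q,q_{12})$ and $d_{g_R}(q,q')$. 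Since $d_{g_R}(q,q_{12}) \leq |t_0| = |\omega_q(q')-\omega_q(q)| \leq 2\,d_{g_R}(q,q')$ by Corollary~\ref{omega-Lip}, this yields $\hat d_\tau(q,q') \leq 5K'\, d_{g_R}(q,q')$ directly, with no estimate on $|\vec x_0|$ whatsoever. What you flag as ``the main obstacle''---a Lipschitz bound for $\lambda_q$, or equivalently the Euclidean-to-$g_R$ bi-Lipschitz control of $\Phi_q$---is thus bypassed entirely here; that control is established separately (and by a different argument, via the framed exponential map) in Proposition~\ref{prop:Lip2}, where it is genuinely needed. Your route would also work, but it imports the harder content of Proposition~\ref{prop:Lip2} into a place where a one-line triangle inequality suffices.

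For the lower bound you are, if anything, more careful than the paper: the paper's proof restricts to piecewise causal curves lying in $U_p$ and takes the infimum over those, without explicitly justifying why curves with break points outside $U$ may be ignored. Your proposed confinement argument (a curve of null length $<\rho$ stays in $B_{\hat d_\tau}(q,\rho)$, since inserting any intermediate point as a break point preserves the null length of a piecewise causal curve) is the standard way to close this gap after shrinking $U_p$ to a suitable $\hat d_\tau$-ball.
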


\begin{proof} 
    The left part of the inequality \eqref{eqDistComparison} is essentially a consequence of the local anti-Lipschitz property of $\tau$\footnote{See Vega \cite[Lemma 3.34]{Vega21} for a similar global result in the case when $\tau$ is smooth.}. Indeed, given any piecewise causal curve $\beta$ from $q$ to $q'$ in $U_p$ with breaks at $\beta(t_i)$, $i=1,\ldots,N+1$, we may use \eqref{eqCGM2} and the triangle inequality to obtain 
    \be
        \begin{aligned}
            \hat{L}_\tau (\beta) 
            & = \sum_{i=1}^N |\tau(\beta(t_{i+1}))-\tau(\beta(t_i))| \\ 
            & \geq \frac{1}{\widetilde C} \sum_{i=1}^N d_{g_R}(\beta(t_{i+1}),\beta(t_i)) \\ 
            & \geq \frac{1}{\widetilde C} d_{g_R} (q,q'),
        \end{aligned}
    \ee
    where the constant $\widetilde C$ is as in \eqref{eqCGM2}.
    Taking the infimum over all piecewise causal curves $\beta$ from $q$ to $q'$ in $U_p$ we get the desired bound 
    \be
        \hat{d}_\tau (q,q')\geq \frac{1}{C'} d_{g_R} (q,q').
    \ee
The proof of the right part of  \eqref{eqDistComparison} is more involved, as it crucially uses properties of uniform Temple charts, in particular the uniform bound of Corollary \ref{omega-Lip}. More specifically, given any two points $q_1,q_2\in U_p$, let $\Phi_{q_1}= \Phi_{ \eta_{q_1}}:W_r\to V_{q_1}=\Phi_{q_1} (W_r)$ be a uniform Temple chart centered at $q_1$, as described in Theorem \ref{th-uniform-Temple}. Since $q_2\in U_p$ is covered by the image of this chart, there exists a unique point $(t_2,\vec{x}_2)\in W_r$ such that $q_2=\Phi_{q_1}(t_2,\vec{x}_2)$.  We set $q_{12}= \eta_{q_1}(t_2)$ and note that $q_1$ and $q_{12}$ are joined by a timelike unit speed geodesic $\eta_{q_1}$, such that $\dot\eta_{q_1} = e_0$ for all values of the arclength parameter. Furthermore, $q_{12} $ and $q_{2}$ are joined by the null geodesic $\gamma_{(t_2,\vec u_2)} (\lambda) = \Phi_{q_1} (t_2, \lambda \vec u_2)$  where $\vec u_2 = \vec x_2/ |\vec x_2|$ and $\lambda\in [0,|\vec x_2|]$.  We note also that by the definition of the optical function $\omega_{q_1}$ of the chart $\Phi_{q_1}: W_r \to \Phi_{q_1} (W_r)$ we have 
    \be
        \label{eq:using-optical}
        q_1 = \eta_{q_1}(0)=\eta_{q_1}(\omega_{q_1}(q_1)) \quad \text{ and } \quad 
        q_{12} = \eta_{q_1}(t_2)=\eta_{q_1}(\omega_{q_1}(q_2)).
    \ee
    Let $d_{g_R}$ denote the distance induced by the Riemannian metric $g_R$ on $K_p$. Then, for the described choice of $q_{12}$ we have 
    \[
        \begin{split}
        \dhat_\tau (q_1,q_2) 
        & \leq \dhat_\tau (q_1,q_{12}) + \dhat_\tau (q_{12},q_2) 
        \hspace{3cm} \textrm{ by the triangle inequality} \\ 
        & = | \tau (q_{1}) - \tau(q_{12})|+ |\tau (q_{12}) - \tau(q_2)|
        \hspace{0.6cm} \textrm{ $\eta_{q_1}$ is timelike and $q_2\in J^+(q_{12})$} \\
        & \leq \widetilde C (d_{g_R}(q_1, q_{12}) + d_{g_R}(q_{12}, q_2))
        \hspace{3.7cm} \textrm{ by Lemma \ref{lemmaSimple}}\\
        & \leq \widetilde C (2 d_{g_R}(q_1, q_{12})+ d_{g_R}(q_{1}, q_2)) 
        \hspace{2.0cm} \textrm{ by the triangle inequality} \\
        & = \widetilde C (2 d_{g_R}(\eta_{q_1}(\omega_{q_1}(q_1)), \eta_{q_1}(\omega_{q_1}(q_2))) + d_{g_R}(q_{1}, q_2)) 
        \hspace{1.5cm} \textrm{ by \eqref{eq:using-optical}} \\
        & \leq \widetilde C (2|\omega_{q_1}(q_1)-\omega_{q_1}(q_2)| + d_{g_R}(q_{1}, q_2)) 
        \hspace{1cm} \textrm{ since $|\dot\eta_{q_1}|_g=|\dot\eta_{q_1}|_{g_R}=1$} \\
        & \leq \widetilde C (4d_{g_R}(q_{1}, q_2) + d_{g_R}(q_{1}, q_2))
        \hspace{4.6cm} \textrm{ by \eqref{eqUniLipBound}} \\ &   = 5\widetilde C d_{g_R}(q_{1}, q_2).
        \end{split}
    \]
    Since $U_p \subset K_p$ this implies the right inequality of \eqref{eqDistComparison} and hence completes the proof.
\end{proof}

\begin{prop}
    \label{prop:Lip2}
    Let $(N,g)$ be a spacetime equipped with a weak temporal function \(\tau\). 
    Given $p\in N$ let  $U_p$  be its uniform Temple neighborhood, and let $\Phi_q: W_r \to V_q=\Phi_q(W_r) \supset U_p$ be a uniform Temple chart, both as constructed in Lemma \ref{lemmaSimple}. Then the restriction
    \be
        \label{eq:bi-Lip-sec6}
        \Phi_q: (\Phi_q^{-1}(U_p),d_{\mathbb{E}^{n+1}}) \to (U_p,d_{g_R}) 
    \ee 
    is bi-Lipschitz. In particular, there is a constant $c>1$ that may only depend on the Lorentzian metric $g$ and the frame field $\{e_a\}$ on the compact set $K_p$ (as in Lemma \ref{lemmaSimple}) 
    such that
    \be 
        \label{eqDistComparison2}
        \frac{1}{c} d_{\mathbb{E}^{n+1}} (\Phi_q^{-1}(z),\Phi_q^{-1}(z')) \leq d_{g_R}(z,z') \leq c d_{\mathbb{E}^{n+1}} (\Phi_q^{-1}(z),\Phi_q^{-1}(z')) 
    \ee 
    holds for all $z,z'\in U_p$.
\end{prop}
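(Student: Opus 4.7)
The uniform Temple chart $\Phi_q$ fails to be smooth on the central axis $\{\vec x = 0\}$, so I cannot directly appeal to a Jacobian argument. Instead, I will establish the two one-sided Lipschitz bounds in \eqref{eqDistComparison2} separately, in each case decomposing $\Phi_q$ or $\Phi_q^{-1}$ as the composition of a genuinely Lipschitz (but not everywhere smooth) map with a smooth map restricted to a compact set. For the forward bound, I write $\Phi_q = \exp \circ \Theta$, where $\Theta: \overline{\Phi_q^{-1}(U_p)} \to TN$ sends $(t,\vec x)$ to $(\eta_q(t),\, |\vec x|e_0 + \sum_i x^i e_i)$. Since $\eta_q$ is a smooth geodesic, the frame field $\{e_a\}$ is smooth on the compact set $K_p$, and $\vec x \mapsto |\vec x|$ is $1$-Lipschitz, $\Theta$ is Lipschitz into a compact subset of the domain of $\exp$, on which $\exp$ is smooth. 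Composition then yields $d_{g_R}(\Phi_q(w),\Phi_q(w')) \leq K\, d_{\mathbb{E}^{n+1}}(w,w')$ for all $w,w' \in \Phi_q^{-1}(U_p)$.

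For the reverse bound I use the framed exponential map $\EXP^{\{e_a\}}$ of Lemma \ref{lem:framed-exp}. Writing $\Phi_q^{-1}(z) = (\omega_q(z), \vec\xi_q(z))$ with $\vec\xi_q(z) = (\xi^1_q(z),\ldots,\xi^n_q(z))$ and $\lambda_q(z) = |\vec\xi_q(z)|$, the definition of $\Phi_q$ gives the identity
\be
(\EXP^{\{e_a\}}_{\eta_q(\omega_q(z))})^{-1}(z) = (\lambda_q(z), \xi^1_q(z), \ldots, \xi^n_q(z)).
\ee
By Corollary \ref{omega-Lip}, $\omega_q$ is Lipschitz on $U_p$ with respect to $d_{g_R}$, so the map $z \mapsto (\eta_q(\omega_q(z)), z)$ is Lipschitz and takes values in a compact subset of $V \times V$ on which $\EXP^{\{e_a\}}$ from \eqref{eq:EXP-2} is a smooth diffeomorphism. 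Composing with its smooth (hence Lipschitz) inverse and projecting onto the last $n$ components shows that $\vec\xi_q$ is Lipschitz in $z$; combined with the Lipschitz bound on $\omega_q$, this gives $d_{\mathbb{E}^{n+1}}(\Phi_q^{-1}(z), \Phi_q^{-1}(z')) \leq K\, d_{g_R}(z,z')$.

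The main technical point is to verify that every composition remains in a domain on which the relevant smooth maps admit a uniform Lipschitz constant. This is built into Theorem \ref{th-uniform-Temple} together with Lemma \ref{lem:EXPDiffeo}: the set $U_p$ is relatively compact in $K_p$, the central geodesics $\eta_q$ stay inside $K_p$, and the uniform normal radius ensures that all the maps $\EXP^{\{e_a\}}_{\eta_q(t)}$ are simultaneous diffeomorphisms on the common ball $B^{n+1}(\sqrt 2\, r)$. The main obstacle, the non-smoothness of $\Phi_q$ on the axis, is rendered harmless by the fact that $\vec x \mapsto |\vec x|$ is Lipschitz and that the desired statement is Lipschitz rather than $C^1$.
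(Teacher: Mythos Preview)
Your approach is correct and rests on the same two ingredients as the paper's proof: Corollary~\ref{omega-Lip} to control the time coordinate $\omega_q$, and the fact that the framed exponential map $\EXP^{\{e_a\}}$ of Lemma~\ref{lem:framed-exp} is a diffeomorphism (hence bi-Lipschitz on compacta) to control the spatial coordinates. The only difference is packaging: the paper bundles both directions into a single bi-Lipschitz estimate for $\EXP$ on the product $\EXP^{-1}(U_p\times U_p)$ and then extracts \eqref{eqDistComparison2} by explicit algebra with product metrics, whereas you treat the two directions separately as compositions of Lipschitz maps, which is slightly cleaner but yields no explicit constants.
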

\begin{proof}
    In this proof, \( \vec x \), \( \vec x_1 \) and \( \vec x_2 \) denote vectors in \( \mathbb R^n \). We begin by recalling Lemma \ref{lem:framed-exp}, according to which the map 
    \be
        \label{eq:EXPEXP}
        \EXP: \EXP^{-1}(U_p\times U_p) \to U_p \times U_p
    \ee
    is a diffeomorphism, and hence it is also a bi-Lipschitz map. Now, using any of the Temple's charts $\Phi_q: W_r \to V_q=\Phi_q(W_r) \supset U_p$ for $q\in U_p$ as described above, we may express this map as 
    \be
        \EXP\left(\eta_q(t), |\vec x|, \vec x\right)=(\eta_q(t),\Phi_q(t,\vec x))
    \ee
    since
    \be
        \Phi_q(t,\vec x)=\EXP_{\eta_q(t)}\left(|\vec x|,\vec x\right).
    \ee
    The fact that the map \eqref{eq:EXPEXP} is bi-Lipschitz implies that there is a constant $K>1$ such that for any 
    \be
        w_1 = \left(\eta_{q}(t_1),|\vec x_1|, \vec x_{1} \right) \quad \text{and}\quad  
        w_2 = \left(\eta_{q}(t_2),|\vec x_2|, \vec x_{2} \right) 
    \ee
    in $\EXP^{-1}(U_p\times U_p)$ we have 
    \be 
        \label{eq:EXPisBiLip}
        \frac{1}{c} d_{g_R\times g_{\mathbb{E}^{n+1}}} (w_1,w_2) \leq d_{g_R\times g_R} (\EXP(w_1),\EXP(w_2)) \leq c d_{g_R\times g_{\mathbb{E}^{n+1}}} (w_1,w_2)
    \ee
    where 
    \be
        d_{g_R\times g_{\mathbb{E}^{n+1}}} (w_1,w_2)=\sqrt{d_{g_R}\left(\eta_q(t_1),\eta_q(t_2)\right)^2 + ||\vec x_1|-|\vec x_2||^2+ |\vec x_1-\vec x_2|^2}
    \ee
    and 
    \be
        d_{g_R\times g_R} (\EXP(w_1),\EXP(w_2))=\sqrt{d_{g_R}\left(\eta_q(t_1),\eta_q(t_2)\right)^2 + d_{g_R} \left(\Phi_q(t_1,\vec x_1), \Phi_q(t_2,\vec x_2)\right)^2}.
    \ee
    We will use \eqref{eq:EXPisBiLip} to prove \eqref{eqDistComparison2} as follows. 

    First, using the right part of the inequality in \eqref{eq:EXPisBiLip} as a starting point, after simple manipulations we get
    \be
        d_{g_R} \left(\Phi_q(t_1,\vec x_1), \Phi_q(t_2,\vec x_2)\right)^2 \leq c^2\left( d_{g_R}\left(\eta_q(t_1),\eta_q(t_2)\right)^2 + ||\vec x_1|-|\vec x_2||^2+ |\vec x_1-\vec x_2|^2\right).
    \ee
    Next we note that $||\vec x_1|-|\vec x_2||\leq |\vec x_1-\vec x_2|$, as a consequence of the triangle inequality, and that  
    \be
        \label{eq:UnitSpeed}
        d_{g_R}\left(\eta_q(t_1),\eta_q(t_2)\right) \leq |t_1-t_2|  
    \ee
    holds, since $|\dot \eta_q|_{g_R} = 1$. Consequently, we obtain
    \be
        d_{g_R} \left(\Phi_q(t_1,\vec x_1), \Phi_q(t_2,\vec x_2)\right) \leq \sqrt{2} c \sqrt{|t_1-t_2|^2 + |\vec x_1-\vec x_2|^2 }
    \ee
    which proves the right part of \eqref{eqDistComparison2}.

    In order to prove the left part of \eqref{eqDistComparison2} we first note that by our estimate for the optical function of the uniform Temple chart \eqref{eqUniLipBound}, we have 
    \be
        \label{eqConsequenceUniLipBound}
        d_{g_R} \left(\Phi_q(t_1,\vec x_1), \Phi_q(t_2,\vec x_2)\right) \geq \frac{1}{2} |t_1-t_2|.
    \ee
    On the other hand, using the left part of the inequality in \eqref{eq:EXPisBiLip}, it is straightforward to obtain
    \be
        |\vec x_1-\vec x_2|^2  \leq (c^2-1) d_{g_R}\left(\eta_q(t_1),\eta_q(t_2)\right)^2 + c^2 d_{g_R}\left(\Phi_q(t_1,\vec x_1), \Phi_q(t_2,\vec x_2)\right)^2 .
    \ee
    Combining \eqref{eq:UnitSpeed} and \eqref{eqConsequenceUniLipBound} we get
    \be
        d_{g_R}\left(\eta_q(t_1),\eta_q(t_2)\right)^2 \leq |t_1-t_2|^2 \leq 4 d_{g_R} \left(\Phi_q(t_1,\vec x_1), \Phi_q(t_2,\vec x_2)\right)^2
    \ee
    hence
    \be 
        \label{eq:XisLipschitz}
        |\vec x_1-\vec x_2|^2  \leq (5c^2-4)  d_{g_R}\left(\Phi_q(t_1,\vec x_1), \Phi_q(t_2,\vec x_2)\right)^2.
    \ee
    Ultimately, combining \eqref{eq:XisLipschitz} and \eqref{eqConsequenceUniLipBound} it is straightforward to check that
    \be
        d_{g_R} \left(\Phi_q(t_1,\vec x_1), \Phi_q(t_2,\vec x_2)\right) \geq \frac{1}{\sqrt{5} c} \sqrt{|t_1-t_2|^2 + |\vec x_1-\vec x_2|^2 }
    \ee
    which proves the left part of \eqref{eqDistComparison2}, completing the proof. 
\end{proof}

Finally, we are able to prove the main result of this section.

\begin{proof}[Proof of Theorem \ref{thm:Lip}] 
    Let $(N,g)$ be a spacetime as in the statement of the theorem. Given $p\in N$ we define its uniform Temple neighborhood $U_p$ as described in Lemma \ref{lemmaSimple}. Then,  by Proposition \ref{prop:Lip1}, $\mathrm{id}: (U_p, d_{g_R}) \to (U_p,\dhat_\tau)$ is a bi-Lipschitz map. Moreover, given any uniform Temple chart $\Phi_q: W_q \to \Phi_q (W_q)$ centered at $q\in U_p$, by Proposition \ref{prop:Lip2} we know that the restriction $\Phi_q:(\Phi_q^{-1}(U_p),d_{\mathbb{E}^{n+1}}) \to (U_p,d_{g_R})$ is bi-Lipschitz. Combining these two facts, we see that the restriction  $\Phi_q:(\Phi_q^{-1}(U_p),d_{\mathbb{E}^{n+1}}) \to (U_p,\dhat_\tau) $ is bi-Lipschitz, which proves the claim.
\end{proof}

We conclude with the following important corollary of Theorem \ref{thm:Lip}.

\begin{cor} 
    \label{zero-measure}
    Let $(N^{n+1},g)$ be a spacetime equipped with a weak temporal function \(\tau\). 
    Then the following holds:
    \begin{itemize}
    \item[(1)] $(N,\dhat_\tau)$ is a rectifiable metric space, covered by countably many bi-Lipschitz uniform Temple charts as described in Theorem \ref{thm:Lip}. 
   \item[(2)] The set $S$ of points where $\tau$ is not differentiable has $\mathcal{H}^{n+1}_{\hat{d}_\tau}(S)=0$,
   where $\mathcal{H}^{n+1}_{\hat{d}_\tau}$ is the $(n+1)$-dimensional Hausdorff measure of the metric space $(N,\hat{d}_\tau)$. 
    \end{itemize}
\end{cor}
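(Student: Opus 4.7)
The plan is to deduce both parts directly from Theorem \ref{thm:Lip}, which supplies bi-Lipschitz Euclidean parametrisations of uniform Temple neighborhoods, combined with the classical Rademacher theorem. For part (1), I would use that $N$, being a smooth manifold, is second countable and hence Lindel\"of. For each $p\in N$, Theorem \ref{thm:Lip} supplies a uniform Temple neighborhood $U_p$ and a bi-Lipschitz chart $\Phi_{q(p)}:(\Phi_{q(p)}^{-1}(U_p),d_{\mathbb{E}^{n+1}}) \to (U_p,\dhat_\tau)$ for some $q(p)\in U_p$. Extracting a countable subcover $\{U_{p_k}\}_{k\in \mathbb{N}}$ from $\{U_p\}_{p\in N}$ and keeping the corresponding charts $\Phi_{q_k}$ realises $(N,\dhat_\tau)$ as a countable union of bi-Lipschitz images of open subsets of $\mathbb{E}^{n+1}$, which is the standard definition of countable $(n+1)$-rectifiability for a metric space.

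For part (2), I would transport the Euclidean Rademacher theorem to $(N,\dhat_\tau)$ through the same countable family of charts. On each $U_{p_k}$, Lemma \ref{lemmaSimple}(3) gives $|\tau(q)-\tau(q')|\leq K' d_{g_R}(q,q')$ for $q,q'\in K_{p_k}$, and Proposition \ref{prop:Lip2} bounds $d_{g_R}$ above by a constant multiple of $d_{\mathbb{E}^{n+1}}$ via $\Phi_{q_k}$, so $\tau\circ \Phi_{q_k}:\Phi_{q_k}^{-1}(U_{p_k})\subset \mathbb{E}^{n+1}\to \mathbb{R}$ is Lipschitz in the ordinary Euclidean sense. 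Rademacher then says $\tau\circ\Phi_{q_k}$ is differentiable off a Lebesgue null set, equivalently off an $\mathcal{H}^{n+1}_{d_{\mathbb{E}^{n+1}}}$ null set. Since $\Phi_{q_k}$ is bi-Lipschitz into $(U_{p_k},\dhat_\tau)$ and bi-Lipschitz maps preserve the vanishing of the ambient-dimensional Hausdorff measure, the image of this exceptional set has $\mathcal{H}^{n+1}_{\dhat_\tau}$-measure zero. Summing over the countable cover yields $\mathcal{H}^{n+1}_{\dhat_\tau}(S)=0$.

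The main subtlety to handle is that non-differentiability of $\tau$ on $N$ is defined via the smooth structure of $N$, whereas $\Phi_{q_k}$ is merely a homeomorphism on the central axis $\eta_{q_k}$ and a diffeomorphism only off of it (see Theorem \ref{th-uniform-Temple}). Off the axis, Euclidean differentiability of $\tau\circ\Phi_{q_k}$ at a point and intrinsic differentiability of $\tau$ at its image coincide, so the two exceptional sets agree up to the axis. The axis itself is a smooth one-dimensional curve whose preimage under $\Phi_{q_k}$ is a coordinate segment in $\mathbb{E}^{n+1}$; since $n\geq 1$, this segment has $\mathcal{H}^{n+1}_{d_{\mathbb{E}^{n+1}}}$-measure zero, and bi-Lipschitz equivalence transfers this to $\mathcal{H}^{n+1}_{\dhat_\tau}(\eta_{q_k}((-r,r)))=0$, so the axes can be absorbed into the null exceptional set without affecting the conclusion.
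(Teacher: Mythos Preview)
Your proposal is correct and matches the paper's proof essentially line for line: a countable subcover of uniform Temple neighborhoods gives rectifiability, and on each chart the Lipschitz composition $\tau\circ\Phi_{q_k}$ is handled by Rademacher, with the exceptional set pushed forward via the bi-Lipschitz equivalence and the central axis $\eta_{q_k}$ absorbed as a one-dimensional (hence $\mathcal{H}^{n+1}$-null) set. The paper records the inclusion $S\cap U_p\subset \Phi_p(\tilde S_p)\cup \eta_p$ exactly as you do; your justification of why the axis must be treated separately (failure of the chart to be a diffeomorphism there) is slightly more explicit than the paper's, but the argument is the same.
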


\begin{proof}
    Since $N^{n+1}$ is a smooth manifold, a cover $\{U_p\}_{p\in N}$ by uniform Temple neighborhoods as defined in Lemma \ref{lemmaSimple} has a countable subcover $\{U_{p_i}\}_{i\in \mathbb{N}}$. Since all uniform Temple charts $\Phi_{p_i}: (\Phi_{p_i}^{-1}(U_{p_i}), d_{\mathbb{E}^{n+1}})\to (U_{p_i},\dhat_\tau)$ are bi-Lipschitz as shown in Theorem \ref{thm:Lip}, it follows that $(N,\dhat_\tau)$ is a countably rectifiable metric space, which proves the first statement. 
    
    As for the second statement, since $S$ is covered by countably many uniform Temple charts as described in the previous paragraph, it suffices to show that  for any uniform Temple chart $\Phi_{p}:\Phi_{p}^{-1}(U_{p})\to U_{p}$ as in Theorem \ref{thm:Lip} we have $\mathcal{H}^{n+1}_{\hat{d}_\tau} (S\cap U_p)= 0$. For this, we first note that the composition map $\tau \circ \Phi_{p} : \Phi_{p}^{-1}(U_{p})  \subset{\mathbb{R}^{n+1}} \to \mathbb{R}$ is Lipschitz so its singular set $\tilde{S}_p$ has 
    \be
        \mathcal{H}^{n+1}_{d_{\mathbb{E}^{n+1}}}(\tilde S_p)=\mathcal{L}^{n+1}(\tilde S_p) = 0
    \ee
    where $\mathcal{H}^{n+1}_{d_{\mathbb{E}^{n+1}}}$ is the Hausdorff measure of $\mathbb{R}^{n+1}$ with respect to the Euclidean distance and  $\mathcal{L}^{n+1}$ denotes the Lebesgue measure of $\mathbb{R}^{n+1}$. Next, we recall that 
    \be
        \Phi_p:(\Phi_p^{-1}(U_p),d_{\mathbb{E}^{n+1}}) \to (U_p,\dhat_\tau) 
    \ee 
    is bi-Lipschitz (see Theorem \ref{thm:Lip}), so we may conclude that $\mathcal{H}^{n+1}_{\hat{d}_\tau}(\Phi_p(\tilde S_p))=0$. Finally, we note that $S\cap U_p \subset \Phi_p(\tilde S_p) \cup \eta_p$, so we get 
    \be
        \mathcal{H}^{n+1}_{\hat{d}_\tau} (S\cap U_p) \leq \mathcal{H}^{n+1}_{\hat{d}_\tau}(\Phi_p(\tilde S_p)) + \mathcal{H}^{n+1}_{\hat{d}_\tau}(\eta_p) = 0
    \ee
    proving the claim.
\end{proof}

\subsection{Null Distance Encodes Causality on Uniform Temple Charts} \label{sec:null}

In this section we briefly  discuss yet another consequence of Theorem \ref{th-uniform-Temple}, namely the following result: 

\begin{thm} 
    \label{th-encodes-causality-locally}
    Let $(N^{n+1},g)$ be a Lorentzian manifold of dimension $n+1$, $n\geq 1$. Suppose that $\tau:N \to \mathbb{R}$ 
    is a weak temporal function in the sense of Definition \ref{def:WeakTemporalFunction}. 
    Then $\hat{d}_\tau$ locally encodes causality in the sense that about every point $p\in N$ there is a uniform Temple neighborhood $\widetilde U_p$ such that for all $q,q' \in \widetilde U_p$ we have
    \be\label{eq:encoding-}
        \hat{d}_\tau(q,q') = \tau(q')-\tau(q)\,  \iff \,  q' \textrm{ is in the causal future of } q.
    \ee
\end{thm}

\begin{proof} 
    We first prove that for every $p\in N$ there is a neighborhood $\widetilde U_p$ such that for all $q,q' \in \widetilde U_p$ we have
    \be
        \hat{d}_\tau(q,q') = \tau(q')-\tau(q) \, \Longrightarrow \, q' \textrm{ is in the causal future of } q.
    \ee
    
    Given $p\in N$, we begin by choosing its uniform Temple neighborhood $U_p$ as in Lemma \ref{lemmaSimple}. In this case for any $q\in U_p$ there is a uniform Temple chart $\Phi_q: W_r \to V_q$ such that $U_p \subset V_q=\Phi_q(W_r) \subset U$,  where $U$ is the neighborhood of $p$  as in Definition \ref{def:WeakTemporalFunction}. See Lemma \ref{lemmaSimple} for more details.
 
    Next, we choose a radius $r_p>0$  such that $B_{\dhat_\tau}  (p,4 r_p)\subset U_p$, and set $\widetilde U_p := B_{\dhat_\tau }(p,  r_p/2)$ so that for any $q\in \widetilde U_p$ we have 
    \be
        \widetilde U_p 
        = B_{\dhat_\tau} \left(p, \tfrac{r_p}{2}\right) 
        \subset B_{\dhat_\tau }(q, r_{p}) 
        \subset B_{\dhat_\tau }(q, 2r_{p})
        \subset B_{\dhat_\tau }(p, 4r_{p})
        \subset U_p
        \subset V_q 
        \subset U.
    \ee
    
    As a consequence, given any  $q,q'\in  \widetilde U_p$ with $\tau(q') - \tau(q) = \hat{d}_\tau(q,q')$ and any $\epsilon \in (0, r_p) $ we have $q'\in B_{\dhat_\tau}(q,r_p)$ and $B_{\dhat_\tau} (q, 2 r_p) \subset V_q$ so by \cite[Lemma 3.3]{Sak-Sor-null} there exists a piecewise causal curve in $B_{\dhat_\tau }  (q,  2r_{p})$ zigzagging from $q$ to  $q'$, with an $\epsilon$-controlled past directed part. See \cite[Lemma 3.3]{Sak-Sor-null} for a precise formulation and more details. Although this does not immediately imply that $q'\in J^+(q)$ (see the discussion in the beginning of Section III in \cite{Sak-Sor-null}), we may argue as in \cite[Section III.C, Proof of Theorem 1.1]{Sak-Sor-null}, to show that $\omega_q (q') \geq 0$, where $\omega_q$ is the optical function of the chart $\Phi_q:W_r\to V_q$. This implies that $q' \in J^+(q)$ whenever $q,q'\in \widetilde U_p$ satisfy $\tau(q') - \tau(q) = \hat{d}_\tau(q,q')$.  
    
    To conclude the proof, we only have to recall that  if $q,q'\in \widetilde U_p$ are such that $q'\in J^+(q)$ then $\hat{d}_\tau(q,q') = \tau(q')-\tau(q)$ holds, see \cite[Corollary 3.19]{SV-null}.
\end{proof}

\begin{rmrk}
    This result may be seen as an improvement of \cite[Theorem 1.1]{Sak-Sor-null} where \eqref{eq:encoding-} was shown to hold with $q=p$. Note also that \cite[Remark 3.8]{Burtscher-Garcia-Heveling-22} discusses an alternative approach to obtaining the result of Theorem \ref{th-encodes-causality-locally}.  At the same time we would like to point out that \cite[Theorem 1.1]{Sak-Sor-null} is a stronger result than Theorem \ref{th-encodes-causality-locally} in the sense that it holds for \emph{generalized} time functions satisfying the locally anti-Lipschitz condition, whereas in Theorem \ref{th-encodes-causality-locally} we require that the time function is weak temporal. See Definition \ref{def:WeakTemporalFunction} which clarifies the difference between the notions of locally anti-Lipschitz (generalized) time functions and weak temporal functions. 
\end{rmrk}

\subsection{A Lorentzian Isometry Theorem} 
\label{sec:Isometry}
We conclude the paper by showing that a spacetime equipped with a weak temporal function $\tau$ such that $|\nabla \tau|_g=1$ almost everywhere can be converted into a metric space in a canonical way. More specifically, we prove the following result.
\begin{thm} 
    \label{Lorentzian-isom}
    Let $(N_1, g_1, \tau_1)$ and $(N_2, g_2, \tau_2)$ be two $(n+1)$-dimensional Lorentzian manifolds, where $n \geq 2$, equipped with weak temporal functions $\tau_i$ such that
    \be \label{grad-tau-one}
        |\nabla^{g_i} \tau_i|_{g_i}=1  \text{ almost everywhere}, \quad i=1,2. 
    \ee
    If there exists a bijection $F: N_1 \to N_2$ that preserves null distances,
    \be 
        \label{hat-here}
        \hat{d}_{\tau_1} (p,q) = \hat{d}_{\tau_2} (F(p), F(q)) \quad  \text{for any} \quad p, q \in N_1,
    \ee
    and time functions,
    \be \label{tau-here}
        \tau_1 = \tau_2 \circ F,
    \ee 
    then $F$ is a diffeomorphism and a Lorentzian isometry, $F^* g_2 =  g_1$.
\end{thm}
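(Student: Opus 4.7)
\textbf{Proof plan for Theorem \ref{Lorentzian-isom}.} The strategy is to reduce the theorem to Theorem \ref{thm:local-to-global}: since $F$ already satisfies the time-preservation and distance-preservation hypotheses by \eqref{tau-here} and \eqref{hat-here}, it suffices to verify that each $\dhat_{\tau_i}$ locally encodes causality. To obtain this we plan to apply Theorem \ref{th-encodes-causality-locally}, and the only missing ingredient is the verification that each $\tau_i$ is locally anti-Lipschitz in the sense of Definition \ref{eqAntiLip}.

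The key step is therefore the following: a Lipschitz time function $\tau$ on $(N,g)$ with $|\nabla^g \tau|_g = 1$ a.e. (interpreted as $g(\nabla^g \tau, \nabla^g \tau) = -1$ a.e., which forces $\nabla^g \tau$ to be past timelike where it exists) is locally anti-Lipschitz. Given $p \in N$, choose a relatively compact neighborhood $U$ and a smooth future timelike unit frame field $\{e_a\}$ on $U$ as in Proposition \ref{prop:FoliationByGeodesics}, and form the Riemannian metric $g_R = g + 2g(e_0,\cdot)\otimes g(e_0,\cdot)$. At any point where $\nabla^g \tau$ exists, set $w=-\nabla^g\tau$ (future timelike unit) and decompose $w = \alpha e_0 + w^\perp$ with $g(e_0,w^\perp) = 0$, so that $\alpha = \sqrt{1+|w^\perp|_g^2}\ge 1$. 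For any future causal $v = \beta e_0 + v^\perp$ with $g(e_0,v^\perp)=0$ one computes
\be
d\tau(v) = \alpha \beta - g(w^\perp,v^\perp), \quad |v|_{g_R}^2 = \beta^2 + |v^\perp|_g^2, \quad |v^\perp|_g \le \beta.
\ee
By continuity, $|w^\perp|_g$ can be made arbitrarily small on a sufficiently small $U$ around $p$ by choosing $e_0$ close to a value of $-\nabla^g \tau$ near $p$ (using mollification or the Lebesgue differentiation theorem to select a representative value of $\nabla^g\tau$). For such a choice we obtain $d\tau(v) \ge c\,|v|_{g_R}$ pointwise a.e. with some $c>0$. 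Integrating along any Lipschitz future causal curve from $q$ to $q'$ in $U$ and using Rademacher's theorem gives $\tau(q')-\tau(q)\ge c\, d_{g_R}(q,q')$, which is the anti-Lipschitz condition.

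With local anti-Lipschitzness established, Theorem \ref{th-encodes-causality-locally} provides for each $p\in N_i$ a neighborhood $\widetilde U_p$ on which $\dhat_{\tau_i}$ encodes causality in the sense required by Theorem \ref{thm:local-to-global}. Applying Theorem \ref{thm:local-to-global} then yields that $F$ is a diffeomorphism and Lorentzian isometry, completing the proof. The main obstacle is the anti-Lipschitz estimate above, which is subtle because $\nabla^g\tau$ is only defined almost everywhere; however, once the pointwise bound $d\tau(v)\ge c|v|_{g_R}$ is obtained at differentiability points, the integral form follows from standard Lipschitz calculus. The heavy lifting (the construction of uniform Temple charts, estimates on optical functions, and local encoding of causality) has already been done in earlier sections of the paper, and the Lorentzian isometry statement itself is packaged into Theorem \ref{thm:local-to-global}.
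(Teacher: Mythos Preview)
Your overall strategy is the same as the paper's: both routes feed Theorem~\ref{th-encodes-causality-locally} into the Hawking--Levichev machinery. You invoke that machinery as the black box Theorem~\ref{thm:local-to-global}; the paper instead reproves it inline (Lemma~\ref{lem-causal-bi} gives a local causal bijection, Theorem~\ref{thLevichev} upgrades this to a local conformal isometry, and the coarea argument from \cite{Sak-Sor-null} forces the conformal factor to be $1$). So at the level of architecture there is no real difference.

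You are right to isolate the anti-Lipschitz verification as the one point that needs checking before Theorem~\ref{th-encodes-causality-locally} can be applied; the paper is silent on this. However, your argument for it has two genuine gaps. First, the step ``$|w^\perp|_g$ can be made arbitrarily small on a sufficiently small $U$ by choosing $e_0$ close to a value of $-\nabla^g\tau$ near $p$'' is neither justified nor needed: the constraint $g(\nabla\tau,\nabla\tau)=-1$ only pins $\nabla\tau$ to the unit hyperboloid, which is noncompact, and there is no continuity available to force nearby values of $\nabla\tau$ to cluster. What actually does the job is the Lipschitz hypothesis on $\tau$, which bounds $|\nabla\tau|_{g_R}$ (hence $|w^\perp|_g$) uniformly on compacts; then $\alpha-|w^\perp|\ge (\,\sqrt{1+|w^\perp|^2}+|w^\perp|\,)^{-1}$ is bounded below by a positive constant depending only on the local Lipschitz constant.

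Second, the passage ``integrating along any Lipschitz future causal curve \dots\ using Rademacher's theorem'' hides a real difficulty: for Lipschitz $\tau$ and smooth $\gamma$ the composition $\tau\circ\gamma$ is Lipschitz and hence differentiable a.e., but the chain rule $(\tau\circ\gamma)'(t)=d\tau_{\gamma(t)}(\gamma'(t))$ can fail on a set of positive measure if $\gamma$ spends time in the non-differentiability set of $\tau$. One correct way to close this is via the Clarke generalized gradient: the Lipschitz bound confines $\partial_C\tau(p)$ to duals of past timelike vectors $w$ with $g(w,w)\le -1$ and $|w|_{g_R}\le L$, and the Clarke chain rule then gives $(\tau\circ\gamma)'(t)\ge \min_{\xi\in\partial_C\tau(\gamma(t))}\xi(\gamma'(t))\ge c\,|\gamma'(t)|_{g_R}$ for a.e.\ $t$, after which integration yields the anti-Lipschitz bound. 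Without some device of this sort, the ``standard Lipschitz calculus'' you invoke does not deliver the estimate.
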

    

Previously Theorem \ref{Lorentzian-isom} was proven by Sakovich and Sormani in \cite[Theorem 1.3]{Sak-Sor-null} under the additional assumption that the \emph{causality of $(N_i,g_i)$, $i=1,2$, is globally encoded by $\tau_i$ and  $\dhat_{\tau_i}$} in the sense that we have
\be 
    \label{eq-global}
    \hat{d}_{\tau_i}(p,q)=\tau_i(q)-\tau_i(p) \iff q \in J^+ (p) \quad \text{holds for all}\quad  p,q\in N_i.
\ee
We note that this additional assumption is satisfied, for example, in the case when the level sets of the time function are future causally complete, as shown by Galloway in \cite[Theorem 3]{Galloway-Null} (see also \cite[Theorem 4.1]{Sak-Sor-null} and \cite[Corollary 1.10]{Burtscher-Garcia-Heveling-22} for earlier results). At the same time, dropping the completness assumption it is very easy to construct examples in which global encoding of causality fails. See for instance \cite[Example 2.2]{Sak-Sor-null} where Minkowski spacetime with its natural time function and a half line removed is considered. In contrast to  \cite[Theorem 1.3]{Sak-Sor-null}, Theorem \ref{Lorentzian-isom} does not require global encoding of causality, as it turns out that local encoding of causality ensured by  Theorem \ref{th-encodes-causality-locally} suffices for proving the result.

The proof of \thmref{Lorentzian-isom} is similar to that of \cite[Theorem 4.1]{Sak-Sor-null}. In particular, it uses the following theorem of Levichev \cite{Levichev-1987}, which builds upon a celebrated result of Hawking (cf. \cite[Lemma 19]{Hawking-2014}). An excellent overview of these and related results, in particular those of Zeeman \cite{Zeeman-64}, Hawking-King-McCarthy \cite{Hawking-King-McCarthy-1976}, and Malament \cite{Malament-1977}), is provided in Minguzzi \cite[Section 4.3.4]{Minguzzi-2019}.

\begin{thm} 
    \label{thLevichev}
    Let $(N_1,g_1)$ and $(N_2,g_2)$ be two $(n+1)$-dimensional distinguishing spacetimes, where $n\geq 2$, and let $F: N_1 \to N_2$ be a causal bijection, that is a bijection such that
    \be
        q \in J^+(p) \, \Longleftrightarrow \, F(q) \in F(J^+(p)).
    \ee
    Then $F$ is a smooth conformal isometry, i.e. there exists a smooth function $\phi: N_1 \to (0,\infty)$ such that $F^* g_2 = \phi^2 g_1$.
\end{thm}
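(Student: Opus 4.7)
My plan is to reduce the theorem to Theorem \ref{thLevichev} by showing $F$ is a causal bijection, and then pin down the resulting conformal factor using the normalization on $|\nabla^{g_i}\tau_i|_{g_i}$. The preliminary step is to verify that a Lipschitz time function $\tau_i$ with $|\nabla^{g_i}\tau_i|_{g_i} = 1$ almost everywhere is locally anti-Lipschitz in the sense of Definition \ref{eqAntiLip}. In a convex normal neighborhood equipped with a Riemannianization $h$ of $g_i$ via a local frame field as in Section \ref{subsec:ConvexNormal}, the reverse Cauchy--Schwarz inequality applied to the timelike unit gradient $\nabla^{g_i}\tau_i$ paired with a causal tangent, integrated along a causal curve, yields (after possibly shrinking the neighborhood) a bound $\tau_i(q) - \tau_i(q') \geq c\, d_h(q,q')$ for causally related $q,q'$, which is exactly the anti-Lipschitz condition. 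Theorem \ref{th-encodes-causality-locally} then supplies uniform Temple neighborhoods on each $N_i$ on which $\hat{d}_{\tau_i}$ encodes causality.

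Next I upgrade $F$ to a causal bijection. Since $F$ is an isometry between the metric spaces $(N_i,\hat{d}_{\tau_i})$, which induce the original manifold topologies by the Sormani--Vega theorem, both $F$ and $F^{-1}$ are homeomorphisms. Given $q \in J^+_1(p)$ with a future causal curve $\gamma:[0,1] \to N_1$ from $p$ to $q$, compactness of $\gamma([0,1])$ together with continuity of $F$ allow me to insert breakpoints $p = x_0, \ldots, x_k = q$ along $\gamma$ so that each consecutive pair lies in a common uniform Temple neighborhood of $N_1$ whose $F$-image lies in a common uniform Temple neighborhood of $N_2$. Local encoding in $N_1$ gives $\hat{d}_{\tau_1}(x_{i-1},x_i) = \tau_1(x_i) - \tau_1(x_{i-1})$; transporting via $F$ and using preservation of $\tau$ and $\hat{d}_\tau$ yields $\hat{d}_{\tau_2}(F(x_{i-1}),F(x_i)) = \tau_2(F(x_i)) - \tau_2(F(x_{i-1}))$, and local encoding in $N_2$ forces $F(x_i) \in J^+_2(F(x_{i-1}))$. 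Concatenation gives $F(q) \in J^+_2(F(p))$; the reverse implication follows symmetrically from $F^{-1}$.

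Because each $\tau_i$ is a continuous time function, $(N_i,g_i)$ is stably causal and hence distinguishing, so Theorem \ref{thLevichev} produces a smooth positive $\phi: N_1 \to (0,\infty)$ with $F^*g_2 = \phi^2 g_1$, and in particular $F$ is a smooth diffeomorphism. The identity $\tau_1 = \tau_2 \circ F$ implies $d\tau_1 = F^*d\tau_2$, and a direct linear-algebra computation at any point where both gradients exist gives $dF(\nabla^{g_1}\tau_1) = \phi^2 \nabla^{g_2}\tau_2$; taking the $g_2$-square-norm of both sides and using $F^*g_2 = \phi^2 g_1$ yields
\[
    g_1(\nabla^{g_1}\tau_1,\nabla^{g_1}\tau_1) = \phi^2 \, g_2(\nabla^{g_2}\tau_2,\nabla^{g_2}\tau_2).
\]
Evaluating both sides using $|\nabla^{g_i}\tau_i|_{g_i}=1$ a.e.\ (so both $g$-inner products equal $-1$) forces $\phi^2 \equiv 1$ almost everywhere, and smoothness together with positivity upgrade this to $\phi \equiv 1$ everywhere, so $F^*g_2 = g_1$. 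The main obstacle I expect is the breakpoint refinement in the second paragraph: one must simultaneously guarantee that consecutive $x_{i-1},x_i$ lie in a uniform Temple neighborhood of $N_1$ \emph{and} that $F(x_{i-1}),F(x_i)$ lie in a uniform Temple neighborhood of $N_2$, which is exactly where the homeomorphism property of $F$ (supplied by the Sormani--Vega topology theorem via the anti-Lipschitz step) becomes indispensable.
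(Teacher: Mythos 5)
Your proposal does not address the statement you were asked to prove. The statement is Theorem \ref{thLevichev} itself: \emph{given only} that $(N_1,g_1)$ and $(N_2,g_2)$ are distinguishing spacetimes and that $F$ is a causal bijection, conclude that $F$ is a smooth conformal isometry. No time functions, no null distances, and no normalization $|\nabla^{g_i}\tau_i|_{g_i}=1$ appear in its hypotheses, and its conclusion is only conformality, not $\phi\equiv 1$. What you have written is instead an argument for Theorem \ref{Lorentzian-isom} (the Lorentzian isometry theorem): you start from the null-distance- and time-preserving bijection, upgrade it to a causal bijection via Theorem \ref{th-encodes-causality-locally}, and then explicitly invoke ``Theorem \ref{thLevichev} produces a smooth positive $\phi$'' before normalizing $\phi$. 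As a proof of Theorem \ref{thLevichev} this is circular: the key step --- passing from a causal bijection to a smooth conformal isometry --- is exactly the content of the theorem, and you assume it rather than prove it.

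The genuine mathematical content of Theorem \ref{thLevichev} is of a completely different nature from anything in your write-up. One must show that a bare causal bijection between distinguishing spacetimes (with no continuity hypothesis a priori) is automatically a homeomorphism, that it preserves the chronology relation and hence the null geodesic structure, and that the causal order of a distinguishing spacetime of dimension at least three determines its conformal class and differentiable structure; this is the Hawking--King--McCarthy/Malament/Levichev circle of ideas. The paper itself does not reprove this result --- it is quoted from Levichev \cite{Levichev-1987} with pointers to Hawking and to Minguzzi's survey --- so if your task was to supply a proof, you would need to reproduce that argument (or cite it as the paper does), not the downstream application in Section \ref{sec:Isometry}. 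Incidentally, your sketch of Theorem \ref{Lorentzian-isom} also deviates from the paper's own treatment of the final normalization: the paper fixes $\phi\equiv 1$ by a coarea-formula argument from \cite{Sak-Sor-null}, since $\nabla^{g_i}\tau_i$ need only exist almost everywhere and the pointwise gradient computation you propose requires some care at non-differentiability points; but that is beside the main issue, which is that you proved the wrong theorem.
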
 

Another important ingredient in the proof of \thmref{Lorentzian-isom} is the following lemma based on the local encoding of causality established in \thmref{th-encodes-causality-locally}.

\begin{lem} 
    \label{lem-causal-bi}
    Under the assumptions of Theorem~\ref{Lorentzian-isom}, $F$ is {\bf locally a causal bijection}, that is,  for any $p\in N_1$ there exists a neighborhood $U$, such that
    \be 
        \label{causal-bijection-1}
        F(J^+(q,U)) = J^+(F(q),F(U)) \quad \text{for all} \quad q \in U.
    \ee
\end{lem}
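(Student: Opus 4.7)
The plan is to transfer the local causal structure between $(N_1,g_1)$ and $(N_2,g_2)$ through the local encoding of causality established in \thmref{th-encodes-causality-locally}. In order to apply that theorem, I first need to observe that each $\tau_i$, being a Lipschitz time function with $|\nabla^{g_i}\tau_i|_{g_i}=1$ almost everywhere, is locally anti-Lipschitz in the sense of \defref{eqAntiLip}. This follows from a standard integration argument along causal curves: given a small convex normal neighborhood with a background Riemannian metric $g_{U,i}$, any causal segment $\gamma$ from $q$ to $q'$ within this neighborhood satisfies $\tau_i(q')-\tau_i(q)=\int \langle\nabla^{g_i}\tau_i,\dot\gamma\rangle\,ds$, and the past-timelike character of $\nabla^{g_i}\tau_i$ together with $|\nabla^{g_i}\tau_i|_{g_i}=1$ bounds this below by the $g_{U,i}$-length of $\gamma$ after shrinking the neighborhood, which gives the anti-Lipschitz estimate against $d^{U_i}_{g_{U,i}}$.

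Next I would observe that $F$ is automatically a homeomorphism of $N_1$ onto $N_2$. Indeed, the hypothesis \eqref{hat-here} makes $F$ an isometry of the definite metric spaces $(N_i,\dhat_{\tau_i})$ (which are definite thanks to the local anti-Lipschitz property together with \cite{SV-null}), and by the same reference the topology of $(N_i,\dhat_{\tau_i})$ coincides with the manifold topology on $N_i$. So for every $p\in N_1$, given a uniform Temple neighborhood $\widetilde U_{F(p)}$ of $F(p)\in N_2$ as provided by \thmref{th-encodes-causality-locally}, the preimage $F^{-1}(\widetilde U_{F(p)})$ is an open neighborhood of $p$.

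With those preliminaries in place, the core step is straightforward. Fix $p\in N_1$ and choose uniform Temple neighborhoods $\widetilde U_p\subset N_1$ and $\widetilde U_{F(p)}\subset N_2$ on which \thmref{th-encodes-causality-locally} applies, and set $U:=\widetilde U_p\cap F^{-1}(\widetilde U_{F(p)})$. For arbitrary $q,q'\in U$ with $q'\in J^+(q)$, the local encoding of causality in $N_1$ gives $\dhat_{\tau_1}(q,q')=\tau_1(q')-\tau_1(q)$, and then \eqref{hat-here} and \eqref{tau-here} yield $\dhat_{\tau_2}(F(q),F(q'))=\tau_2(F(q'))-\tau_2(F(q))$. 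Since $F(q),F(q')\in \widetilde U_{F(p)}$, the local encoding of causality in $N_2$ forces $F(q')\in J^+(F(q))$, so $F(J^+(q)\cap U)\subseteq J^+(F(q))\cap F(U)$. The reverse inclusion is proved in exactly the same way: for $q''=F(q')\in J^+(F(q))\cap F(U)$, the encoding in $\widetilde U_{F(p)}$ gives $\dhat_{\tau_2}(F(q),F(q'))=\tau_2(F(q'))-\tau_2(F(q))$, which transports via $F^{-1}$ to $\dhat_{\tau_1}(q,q')=\tau_1(q')-\tau_1(q)$, and the encoding in $\widetilde U_p$ yields $q'\in J^+(q)$. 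This establishes \eqref{causal-bijection-1}.

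The only genuine technical point is the verification that $|\nabla^{g_i}\tau_i|_{g_i}=1$ almost everywhere suffices for local anti-Lipschitzness; the rest of the argument is a formal shuffle of the two ingredients $\dhat_\tau \leftrightarrow \tau$ and $F \leftrightarrow F^{-1}$. I expect that verification to be the main (though modest) obstacle, since one has to handle Rademacher-type differentiation of the Lipschitz function $\tau_i$ along Lipschitz causal curves, rather than smooth ones; this can be handled by approximation, or by citing the analogous step in the proof of \cite[Theorem 1.3]{Sak-Sor-null}, which relied on precisely the same gradient-one hypothesis.
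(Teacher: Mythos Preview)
Your proposal is correct and follows essentially the same route as the paper: choose neighborhoods on each side where \thmref{th-encodes-causality-locally} applies, intersect via $F^{-1}$ to get $U$, note that $F$ is a homeomorphism because it is a $\dhat$-isometry, and then run the formal shuffle $J^+ \leftrightarrow \dhat_\tau = \Delta\tau$ through $F$ in both directions. The only difference is that the paper invokes \thmref{th-encodes-causality-locally} directly without pausing to verify that the hypothesis $|\nabla^{g_i}\tau_i|_{g_i}=1$ a.e.\ implies local anti-Lipschitzness, whereas you flag and sketch this step; your caution here is reasonable but not something the paper treats as part of this lemma.
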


\begin{proof}
    
    Let \( p \in N_1\). By Theorem~\ref{th-encodes-causality-locally} there exists an open neighborhood \( W_1 \subset N_1 \) about \( p \) and an open neighborhood \( W_2 \subset N_2 \) about \( F(p) \) such that for all \( q_i,q_i' \in W_i \) we have
    \be 
        \label{q'q}
        q_i' \in J^+(q_i) \iff \hat{d}_{\tau_i}(q_i', q_i) = \tau_i(q_i')-\tau_i(q_i).
    \ee
    Since \( F\) is distance preserving (see \eqref{hat-here}) it is a homeomorphism, hence \( F^{-1}(W_2) \subset N_1 \) is an open set containing \( p \). Due to (4) in Theorem~\ref{th-uniform-Temple} we may without loss of generality assume that \( W_1 \subset F^{-1}(W_2) \). We will show that \( U = W_1 \) satisfies \eqref{causal-bijection-1}.

    We will first prove that 
    \be 
        \label{causal-bijection-5}
        F(J^+(q,W_1)) \subset J^+(F(q),F(W_1)) \quad \text{for all} \quad q \in W_1.
    \ee
    For this, let \( q\in W_1 \) and \( z \in J^+(q,W_1) \) be arbitrary. Then there exists a future causal 
    curve \( \gamma: [0,1] \to W_1 \) such that \( \gamma(0) = q \) and \( \gamma(1) = z \). Recalling \eqref{q'q}, \eqref{hat-here} and \eqref{tau-here} we conclude that for all \( t,t' \in [0,1]\) we have
    \be
        \label{eq:Implication}
        \begin{split}
        t \leq t' 
        & \implies \gamma(t') \in J^+(\gamma(t),W_1) \\
        & \implies \gamma(t') \in J^+(\gamma(t)) \\
        & \implies \hat d_{\tau_1}(\gamma(t'),\gamma(t)) = \tau_1(\gamma(t')) - \tau_1(\gamma(t)) \\
        & \implies \hat d_{\tau_2}((F \circ \gamma)(t'),(F \circ \gamma)(t)) = \tau_2((F\circ\gamma)(t')) - \tau_2((F \circ \gamma)(t)) \\
        & \implies (F \circ \gamma)(t') \in J^+((F\circ \gamma)(t)).
        \end{split}
    \ee
    We will now deform the continuous function \( F\circ \gamma: [0,1] \to F(W_1) \) into a (piecewise smooth) future causal curve from $F(q)$ to $F(z)$ that is completely contained in \( F(W_1) \). 
    
    Since the spacetimes $(N_i,g_i)$ are equipped with time functions $\tau_i: N_i \to \mathbb{R}$, they are stably causal, and in particular strongly causal, see e.g. \cite[Theorem 3.56 and Proposition 3.57]{MinguzziSanchez}. Consequently, since \( F(W_1) \) is open, for every \( t \in [0,1] \) there is a causally convex neighborhood \( \widetilde{W}_{(F\circ\gamma )(t)} \subset F(W_1) \) of \( (F \circ \gamma)(t) \), so that every causal curve $\beta: [0,1] \to N_2$ such that $\beta(0),\beta(1) \in \widetilde{W}_{(F\circ\gamma )(t)}$ has image contained in $\widetilde{W}_{(F\circ\gamma )(t)}$.
    
    For each \( t \in [0,1] \), we let \( I(t) \) denote the connected component of the preimage \( (F \circ \gamma)^{-1}(\widetilde W_{(F \circ \gamma)(t)}) \) that contains the point \( t \). By definition, \( \{I(t)\}_{t \in [0,1]} \) is a collection of open sets in \( [0,1] \) such that  \( \cup_{t\in[0,1]}I(t)=[0,1] \). Since \( [0,1] \) is compact there is a finite collection of intervals \( \{I(t_i)\}_{i = 1}^n \) such that \( \cup_{i=1}^n I(t_i)=[0,1] \).
     
    Due to Lemma \ref{lem:CoveringLemma} there is $L\in \{1,\ldots, n\}$ and points 
    \be
    0=s_0 < s_1 < \ldots < s_{L} = 1 
    \ee
    such that for all $l\in\{1,\ldots, L\}$ there is $i_l\in \{1,\ldots, n\}$ such that $s_{l-1}, s_l \in I(t_{i_l})$. Since \( s_{l-1} < s_l \),  \eqref{eq:Implication} implies that
    \be 
        (F\circ \gamma)(s_l) \in J^+((F\circ \gamma)(s_{l-1}))
        \quad \text{for all \( l = 1,\ldots, L \)}.
    \ee
    At the same time, since \( s_{l-1}, s_l \in I(t_{i_l}) \), we have
    \be
        (F \circ \gamma)(s_{l-1}), \, (F \circ \gamma)(s_l) \in (F \circ \gamma)(I(t_{i_l})) \subset \widetilde W_{(F \circ \gamma)(t_{i_l})} \quad \text{for all \( l = 1,\ldots, L \)}.
    \ee
    By properties of causally convex neighborhoods \( \widetilde{W}_{(F\circ\gamma)(t)} \), we conclude that for all \( l = 1,\ldots, L \), there is a smooth future causal curve from \( (F\circ \gamma)(s_{l-1}) \) to \( (F\circ \gamma)(s_l) \) contained in \( \widetilde{W}_{(F\circ\gamma)(t_{i_l})} \subset F(W_1) \). Concatenating these curves one obtains a piecewise smooth, future causal curve from \( F(q) = (F \circ \gamma)(0) \) to \( F(z) = (F\circ \gamma)(1) \), wholly contained in \( F(W_1) \). Thus \( F(z) \in J^+(F(q),F(W_1)) \). Since \( z \in J^+(q,W_1) \) was arbitrary, \eqref{causal-bijection-5} follows. 

    The proof of the reverse inclusion
    \be 
        F(J^+(q,W_1)) \supset J^+(F(q),F(W_1)) \quad \text{for all} \quad q \in W_1,
    \ee
    is very similar to the proof of \eqref{causal-bijection-5} and is therefore omitted.
\end{proof}

\begin{proof}[Proof of Theorem~\ref{Lorentzian-isom}]
    \lemref{lem-causal-bi} implies that for every $p\in N_1$ there is a neighborhood $U_p$ such that $F: U_p \to F(U_p)$  is a causal bijection. By the continuity of the time functions $\tau_i$, $i=1,2$, both $U_p$ and $F(U_p)$ are distinguishing, see for example \cite[Theorem 4.5
    8 (v')]{Minguzzi-2019}. Applying \thmref{thLevichev} we conclude that $F: U_p \to F(U_p)$ is a conformal isometry so that $F^* g_2 = \phi^2 \, g_1$ on $U_p$ for a smooth function $\phi: U_p \to (0,\infty)$.

    That $\phi \equiv 1$, so that $F: U_p \to F(U_p)$ is an isometry, follows from the same argument using the coarea formula as in \cite[Proof of Theorem 1.3]{Sak-Sor-null}\footnote{We would like to point out a typo in the proof of \cite[Theorem 1.3]{Sak-Sor-null}: while the theorem is formulated for $\dim N_1= \dim N_2= n+1$ where $n\geq 2$ the proof of is written for $\dim N_1 = \dim N_2 =n \geq 3$.} We see that $F$ is a local isometry, but since it is also a bijection it is in fact a global isometry, which completes the proof.
\end{proof} 

Concluding this section we would like to point out that an assumption along the lines of \eqref{grad-tau-one} in \thmref{Lorentzian-isom} is necessary, see for example \cite[Example 5.2]{Sak-Sor-null}. We would also like to emphasize once again that \thmref{Lorentzian-isom} applies to a much larger class of spacetimes and time functions than \cite[Theorem 1.3]{Sak-Sor-null}, in particular to those  where there is no global encoding of causality, cf.  \cite[Example 2.2]{Sak-Sor-null}. All that is required is that the time function is weak temporal with $|\nabla \tau^g|_g=1$ almost everywhere.

\appendix

\section{}

In this appendix, we prove an elementary lemma about finite open covers of compact intervals.
\begin{lem}
    \label{lem:CoveringLemma}
    Let \( \{I_i\}_{i = 1}^n \), \( n \geq 1 \), be a collection of intervals  $I_i$ that are open in \( [0,1] \) and such that \( \cup_{i=1}^n I_i = [0,1] \). Then there exists $L\in \{1,\ldots, n\}$ and points 
    \be\label{eqIncreasing}
        0 = s_0 < s_1 < \ldots < s_{L} = 1 
    \ee
    such that for all $l\in\{1,\ldots, L\}$ there is $i_l \in \{1,\ldots, n\}$ such that $s_{l-1}, s_l \in I_{i_l}$.
\end{lem}

\begin{proof}
    The construction of the sequence \( \{s_l\}_{l = 0}^L \) proceeds in two steps. 
    
    \textbf{\emph{Step 1.}} We begin by relabeling the given collection of intervals in a particular order, thereby constructing a new (possibly smaller) collection \( \{\tilde I_l\}_{l = 1}^L \) covering $[0,1]$.  First, we let \( \tilde I_1 \) be any of the intervals in \( \{I_i\}_{i = 1}^n \) that contains \( 0 \). If \( 1 \in \tilde I_1 \) then we set $L=1$ and in this case \( \{\tilde I_l\}_{l = 1}^1 \) is the desired collection. Otherwise, we let \( \tilde I_2 \) be any of the intervals in \( \{I_i\}_{i = 1}^n \setminus \{\tilde I_1\}  \) that contains \( \sup (\tilde I_1) \). Once again, if \( 1 \in \tilde I_2 \) then we set $L=2$ and in this case \( \{\tilde I_l\}_{l = 1}^2 \) is the desired collection. Otherwise we let \( \tilde I_3 \) be any of the intervals in \( \{I_i\}_{i = 1}^n \setminus \{\tilde I_l\}_{l = 1}^2 \)  that contains \( \sup (\tilde I_2) \) and so on. Since the collection \( \{I_i\}_{i = 1}^n \) is finite, this process will terminate after finitely many steps, resulting in a subcollection \( \{\tilde I_l\}_{l = 1}^L \subset \{I_i\}_{i = 1}^n\), and it is an easy matter to check that $\cup_{l=1}^L \tilde I_l = [0,1]$. 
    

    \textbf{\emph{Step 2}.} Now that the sequence \( \{\tilde I_l\}_{l = 1}^L \) has been constructed, we can find a sequence \( \{s_l\}_{l = 0}^L \) with properties as in the statement of the lemma. If \( L = 1 \), then \( \tilde I_1 = [0,1] \) and we can let \( s_0 = 0 \) and \( s_1 = 1 \). If \( L > 1 \), then by construction, \( 0 \in \tilde I_1 \), \( 1 \in \tilde I_L \), and for all \( l = 1,\ldots, L - 1 \) we have  \( 0 < \sup(\tilde I_l) < \sup(\tilde I_{l + 1}) \leq 1 \). Moreover, it is straightforward to check that there is an $\epsilon > 0$ such that $\sup(\tilde I_l)-\epsilon \in \tilde I_l \cap \tilde I_{l + 1}$ for all \( l = 1,\ldots, L - 1\).  With this at hand, we see that the sequence $\{s_l\}_{l=0}^L$ defined by \( s_0 = 0 \), \( s_L = 1 \) and $s_l := \sup(\tilde I_{l})-\varepsilon$ for $l=1,\ldots, L - 1$ is strictly increasing so that \eqref{eqIncreasing} holds. Furthermore, we have $s_l, s_{l-1}\in \tilde I_{l}$ for $l=1,\ldots, L$, proving the claim.
    \end{proof}
    

\bibliographystyle{plain}
\bibliography{SF-bib.bib}

\end{document}